
\documentclass[11pt]{amsart}
\usepackage{pdfsync}
\usepackage{graphicx,color,epsfig}
\usepackage{amsfonts,amsmath,amssymb,amsthm}
\usepackage{comment}
\usepackage{hyperref}
\usepackage[utf8]{inputenc}

\setlength{\textheight}{9in}
\setlength{\textwidth}{6.5in}
\setlength{\oddsidemargin}{-2pt}
\setlength{\evensidemargin}{-2pt}
\setlength{\topmargin}{-.5in}
\setlength{\headsep}{24pt}
\setlength{\parskip}{0.15in}

\raggedbottom

\newtheorem{thm}{Theorem}[section]

\newtheorem{lem}[thm]{Lemma}
\newtheorem{cor}[thm]{Corollary}

\newtheorem{asm}{Assumption}

\theoremstyle{remark}
\newtheorem{rem}{Remark}[section]

\theoremstyle{definition}
\newtheorem{defn}[thm]{Definition}

\newcommand{\ra}{\rightarrow}

\newcommand{\R}{\mathbb R}     
\newcommand{\Z}{\mathbb Z}     

\renewcommand{\a}{\alpha}
\renewcommand{\b}{\beta}

\renewcommand{\d}{\delta}

\newcommand{\e}{\varepsilon}

\newcommand{\w}{\omega}

\renewcommand{\l}{\lambda}
\renewcommand{\L}{\Lambda}

\newcommand{\s}{\sigma}

\renewcommand{\k}{\kappa}

\renewcommand{\P}{\mathbb{P}_{\eta}}   

\newcommand{\E}{\mathbb{E}_{\eta}}   



\renewcommand{\th}{\theta}

\newcommand{\fl}[1]{ \left\lfloor #1 \right\rfloor }
\renewcommand{\cl}[1]{ \left\lceil #1 \right\rceil }
\DeclareMathOperator{\supp}{supp}

\newcommand{\ind}[1]{ \mathbf{1}_{ \{ #1 \} } } 
\newcommand{\one}{\mathbf{1}}

\newcommand{\lcrit}{\l_{\text{crit}}}
\newcommand{\tcrit}{t^*}
\newcommand{\ts}{t_*}
\newcommand{\vp}{\mathrm{v}_0}

\title[Large deviations for RWRE on a strip]{Large deviations for random walks in a random environment on a strip}
\date{\today}
\author{Jonathon Peterson}
\address{Jonathon Peterson \\  Purdue University \\ Department of Mathematics \\ 150 N University Street \\ West Lafayette, IN  47907 \\ USA}
\email{peterson@math.purdue.edu}
\urladdr{http://www.math.purdue.edu/~peterson}

\begin{document}

\begin{abstract}
 We consider a random walk in a random environment (RWRE) on the strip of finite width $\Z \times \{1,2,\ldots,d\}$. 
We prove both quenched and averaged large deviation principles for the position and the hitting times of the RWRE. 
Moreover, we prove a variational formula that relates the quenched and averaged rate functions, thus extending a result of Comets, Gantert, and Zeitouni for nearest-neighbor RWRE on $\Z$ 
\end{abstract}

\maketitle

\section{Introduction}

In this paper we will study the large deviations of a random walk in a random environment (RWRE) on the strip $\Z \times [d]$ where we use the notation $[d]$ to denote the finite set $\{1,2,\ldots,d\}$.
A point $(k,i) \in \Z \times [d]$ will be said to be at \emph{height} $i$ of \emph{level} $k$ in the strip. We will be interested in RWRE on the strip that can move at most one level to the left or right. In this case, the model of RWRE on the strip can be described as follows.
An \emph{environment} $\w$ is given by three sequences of $d\times d$ matrices. That is $\w = \{ \w_n \}_{n \in \Z} = \{(q_n,r_n,p_n)\}_{n \in \Z}$, where $q_n,r_n$ and $p_n$ are non-negative $d\times d$ matrices for each $n$ such that $q_n+r_n+p_n$ is a stochastic matrix for every $n \in \Z$. That is,
\[
 \sum_{j\in[d]} \left( q_n(i,j) + r_n(i,j) + p_n(i,j) \right) = 1, \quad \forall i \in [d], \, n \in \Z.
\]
For a fixed environment $\w$, we can define the RWRE starting at $(x,i) \in \Z \times [d]$ to be the Markov chain $\xi_n$ with distribution $P_\w^{(x,i)}$ defined by
$P_\w^{(x,i)}( \xi_0 = (x,i) ) = 1$ and
\begin{equation}\label{eq:transprob}
 P_\w^{(x,i)}\left( \xi_{n+1} = (m,j) \, | \, \xi_n = (k,i) \right)
=
\begin{cases}
 q_k(i,j) & \text{if } m=k-1 \\
 r_k(i,j) & \text{if } m=k \\
 p_k(i,j) & \text{if } m=k+1 \\
 0 & \text{otherwise}.
\end{cases}
\end{equation}
That is, the matrices $q_k,r_k$ and $p_k$ give the one-step transition probabilities for jumping to the level to the left of level $k$, within level $k$, and to the right of level $k$, respectively.
$P_\w^{(x,i)}$ is called the quenched law of the RWRE, and expectations with respect to this law are denoted $E_\w^{(x,i)}$.

We can also define the averaged law of the RWRE by first choosing the environment randomly.
To make this precise, let
\[
 \Sigma = \{(q,r,p) \in \R_+^{d\times d} \times \R_+^{d\times d} \times \R_+^{d\times d} \, : \, (q+r+p)\one = \one \}
\]
denote the set of all transition probabilities for a fixed level of the strip so that $\Omega = \Sigma^\Z$ is the set of all possible environments $\w$ on the strip.
Let $\mathcal{F}$ be the Borel $\s$-algebra on $\Omega$, and let $\eta$ be a probability measure on $(\Omega,\mathcal{F})$.
Then the averaged law of the RWRE is defined by
\[
 \P^{(x,i)}(\cdot) = E_\eta\left[ P_\w^{(x,i)}(\cdot) \right]
\]
where $E_\eta$ denotes expectation with respect to the distribution $\eta$ on the random environment $\w$.
Expectations with respect to the averaged law $\P^{(x,i)}$ of the RWRE will be denoted $\E^{(x,i)}$.

Often times we will start the RWRE at a location $(0,i)$ in level 0 of the strip. However, we may also chose to start the RWRE at a random height $i \in [d]$ instead. To this end, for a fixed environment $\w$ and a probability distribution $\pi$ on $[d]$ we will define $P_\w^\pi(\cdot) = \sum_{i} \pi(i) P_\w^{(0,i)}(\cdot)$. That is, the RWRE starts at $(0,i)$ with probability $\pi(i)$. The corresponding averaged law will be denoted $\P^\pi$. At times we will even allow $\pi = \pi(\w)$ to depend on the environment $\w$ in a measurable way so that $\P^\pi(\cdot) = \int_\Omega P_\w^\pi(\cdot) \, \eta(d\w)$ is still well defined. Naturally, $E_\w^\pi$ and $\E^\pi$ will denote the corresponding quenched and averaged expectations.

The first results for RWRE on a strip were by Bolthausen and Goldsheid \cite{bgSRT} who gave a criterion for the RWRE to be recurrent or transient to the left/right. 
Subsequently, Goldsheid proved a law of large numbers and a quenched central limit theorem \cite{gSCLT} and independently Roiterstein also proved a law of large numbers and an averaged central limit theorem for the RWRE \cite{rSCLT}. 
We note that since the strip is bounded in the second coordinate, the law of large numbers and the central limit theorems are proved for the first coordinate of the RWRE. That is, if we write $\xi_n = (X_n,Y_n)$ then the law of large numbers proved in \cite{gSCLT,rSCLT} states that there exists a $\vp \in [-1,1]$ such that
\begin{equation}\label{llnXn}
 \lim_{n\ra\infty} \frac{X_n}{n} = \vp, \quad \P^\pi\text{ - a.s.}
\end{equation}
In both \cite{gSCLT} and \cite{rSCLT}, this law of large numbers for $X_n/n$ was deduced from a law of large numbers for hitting times. 
For any $x \in \Z$ let $T_x = \inf \{ n\geq 0: X_n = x \}$ be the hitting time of the level $\{x \} \times [d]$ for the random walk. It was shown in \cite{gSCLT,rSCLT} for RWRE on the strip that are recurrent or transient to the right that 
\begin{equation}\label{llnTn}
 \lim_{n\ra\infty} \frac{T_n}{n} = 1/\vp, \quad \P^\pi\text{ - a.s.},
\end{equation}
where we interpret $1/\vp = \infty$ if $\vp=0$. 
The main goal of this paper is to study asymptotics of probabilities of large deviations away from the laws of large numbers in \eqref{llnXn} and \eqref{llnTn}. That is, we will prove large deviation principles for both $T_n/n$ and $X_n/n$ under both the quenched and averaged laws. 
Moreover, we will give a variational formula relating the respective quenched and averaged large deviation rate functions.

Large deviations of RWRE on $\Z$ and $\Z^d$ have been studied previously using a number of different approaches \cite{gdhLDP,cgzLDP,vLDP,zLEQLDP,rLDPmixing,yQLDP,yALDP,pzALDRF}.
Some of these results on $\Z^d$ allow for bounded step sizes \cite{vLDP,yQLDP,yALDP} and thus would apply to certain RWRE on the strip 
(see Appendix \ref{sec:boundedjumps} below). 
However, in these papers the quenched and averaged large deviation principles are proved using different approaches and thus it is very difficult to compare the quenched and averaged rate functions.
In contrast, \cite{cgzLDP} develops a unified approach for studying both quenched and averaged large deviations of nearest-neighbor RWRE on $\Z$ that not only proves quenched and averaged large deviation principles for $X_n/n$ and $T_n/n$ but also gives a variational expression relating the quenched and averaged rate functions.

In this paper we will adapt the approach of \cite{cgzLDP} to the case of RWRE on the strip. 
We note that the results in \cite{cgzLDP} were later generalized in \cite{dgzLDPH} to the model of nearest-neighbor RWRE on $\Z$ with holding times, and at times we will borrow from ideas in \cite{dgzLDPH} as well. 
The main difference from the one-dimensional case is that the random walk can enter a new level at any height and thus the differences of hitting times $T_k - T_{k-1}$ are no longer independent under the quenched measure which makes it difficult to study the asymptotics of the quenched moment generating functions $E_\w^\pi[ e^{\l T_n} \ind{T_n < \infty} ]$. This is overcome by keeping track of both the hitting times and the heights at which the random walk enters a level so that $E_\w^\pi[ e^{\l T_n} \ind{T_n < \infty} ]$ can be represented using products of random ($\w$-dependent) matrices.
We then use some ideas from \cite{fkPRM} to obtain formulas for the asymptotics of these products.
As in \cite{cgzLDP}, understanding the asymptotics of the moment generating functions of the hitting times is the key to deriving large deviation principles for both the hitting times and the speed of the random walk.

\subsection{Main Results}

Before stating our main results, we need to first introduce some assumptions on the environment.
Our first assumption is that the distribution on environments is spatially ergodic with respects to shifts of the $\Z$-coordinate.
Recalling that $\w_n = (q_n,r_n,p_n)$, 
let $\theta: \Omega \ra \Omega$ be the natural left shift operator defined by $(\theta \w)_k = \w_{k+1}$.
\begin{asm}\label{asmerg}
 The sequence $\{ \w_n \}_{n\in\Z}$ is stationary and ergodic under the measure $\eta$ on environments. That is, the dynamical system $(\Omega,\mathcal{F},\eta,\theta)$ is stationary and ergodic.
\end{asm}

For technical reasons we will also need some strong ellipticity assumptions on the environments. 
\begin{defn}
For any $\k>0$, let $\Sigma_\k \subset \Sigma$ be the set of transition probabilities $(q,r,p)$ from a given level that satisfy
\begin{equation}\label{onestepellipticity}
 \sum_j q(i,j) \geq \k, \quad \text{and} \quad \sum_j p(i,j) \geq \k, \quad \forall i \in [d]
\end{equation}
and
\begin{equation}\label{entryheightellipticity}
 ((I - r)^{-1}q)(i,j) \geq \k , \quad \text{and} \quad ((I - r)^{-1}p)(i,j) \geq \k, \quad \forall i,j \in [d]. 
\end{equation}
Moreover, define $\Omega_\k = \Sigma_\k^\Z$ so that environments $\w \in \Omega_\k$ satisfy the uniform ellipticity assumptions \eqref{onestepellipticity} and \eqref{entryheightellipticity} at every level in the strip. 
\end{defn}

\begin{asm}\label{easm}
 There exists a $\k > 0$ such that $\eta( \w \in \Omega_\k ) = 1$. 
\end{asm}
\begin{rem}
 The uniform ellipticity assumptions \eqref{onestepellipticity} and \eqref{entryheightellipticity} have simple probabilistic interpretations. 
For any environment $\w \in \Omega_\k$,
\[
 P_\w^{(k,i)}( X_1 = k-1 ) \geq \k, \quad  P_\w^{(k,i)}( X_1 = k+1 ) \geq \k, \quad \forall k \in \Z, \, i \in [d],
\]
and 
\[
 P_\w^{(k,i)}( T_{k-1} < T_{k+1}, \, Y_{T_{k-1}} = j ) \geq \k ,  \quad P_\w^{(k,i)}( T_{k+1} < T_{k-1}, \, Y_{T_{k+1}} = j ) \geq \k , \quad \forall k \in \Z, \, i,j \in [d].
\]
Also, note that for any environment $\w \in \Omega_\k$ the state space $\Z \times [d]$ for the random walk is irreducible. 
\end{rem}
\begin{rem}
 Assumption \ref{easm} is slightly stronger than the uniform ellipticity assumptions made by Goldsheid in \cite{gSCLT} where the condition \eqref{onestepellipticity} is replaced by the assumption that $\sum_j (p(i,j) + q(i,j)) \geq \k$ for every $i\in[d]$. 
\end{rem}

As in \cite{cgzLDP}, the key to proving the quenched large deviation principle will be the derivation of a formula for the asymptotic quenched logarithic moment generating function of $T_n$. In particular, we will show that there is a deterministic function $\L_\eta(\l)$ such that
\begin{equation}\label{eq:qlmgflim}
 \lim_{n\ra\infty} \frac{1}{n} \log E_\w^\pi\left[ e^{\l T_n} \ind{T_n<\infty} \right] = \L_\eta(\l), \quad \eta\text{ - a.s.},
\end{equation}
where the limit does not depend on the initial distribution $\pi$ for the starting height. In Section \ref{sec:qlmgf} we will prove the existence of the above limit, give a formula for $\L_\eta(\l)$, and show that $\L_\eta(\l)$ is differentiable. From this, the following quenched large deviation principle follows in the standard way.
\begin{thm}\label{QLDPTn}
 For a distribution $\eta$ on environments satisfying Assumptions \ref{asmerg} and \ref{easm}, define
\begin{equation}\label{Jetadef}
 J_\eta(t) = \sup_\l \{ \l t - \L_\eta(\l) \}.
\end{equation}
Then, for $\eta$-a.e.\ environment $\w$ and any initial distribution $\pi$ for the starting height at level 0 (even depending on the environment), $T_n/n$ satisfies a weak large deviation principle under the measure $P_\w^\pi$ with rate function $J_\eta$. That is, for any open $G $
\[
 \liminf_{n\ra\infty} \frac{1}{n} \log P_\w^\pi( T_n/n \in G ) \geq - \inf_{t \in G} J_\eta(t),
\]
and for any compact $F$
\begin{equation}\label{eq:qldpub}
 \limsup_{n\ra\infty} \frac{1}{n} \log P_\w^\pi( T_n/n \in F ) \leq - \inf_{t \in F} J_\eta(t).
\end{equation}
\end{thm}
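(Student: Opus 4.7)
The plan is a Gärtner--Ellis argument built on the quenched MGF limit \eqref{eq:qlmgflim} and the differentiability of $\L_\eta$, both of which are outputs of Section \ref{sec:qlmgf}. Fix an environment $\w$ in the full-$\eta$-measure set on which \eqref{eq:qlmgflim} holds for every initial distribution $\pi$; the fact that the limit does not depend on $\pi$, asserted in the paragraph preceding the theorem, lets us collapse all choices of $\pi$ into a single event. Since $\l \mapsto \frac{1}{n}\log E_\w^\pi[e^{\l T_n}\ind{T_n<\infty}]$ is convex for each $n$ (log-convexity of the moment generating function), $\L_\eta$ is convex, and together with its differentiability this makes $J_\eta$ in \eqref{Jetadef} a convex, lower semicontinuous rate function.

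For the upper bound \eqref{eq:qldpub}, exponential Chebyshev gives
\[
P_\w^\pi(T_n \geq nt) \leq e^{-n\l t}\, E_\w^\pi[e^{\l T_n}\ind{T_n<\infty}] \qquad (\l \geq 0),
\]
and, using $T_n \geq 0$, the analogous bound holds for $T_n \leq nt$ when $\l \leq 0$. Dividing by $n$ and applying \eqref{eq:qlmgflim} gives
\[
\limsup_{n\to\infty} \frac{1}{n}\log P_\w^\pi(T_n/n \geq t) \leq -\sup_{\l\geq 0}\{\l t - \L_\eta(\l)\},
\]
and symmetrically for $T_n/n \leq t$ with $\l \leq 0$. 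Because $\L_\eta$ is convex with $\L_\eta(0)=0$, these restricted suprema agree with $J_\eta(t)$ on the respective sides of $\L_\eta'(0)$, so a finite-cover argument on the compact $F$---choosing, for each $t \in F$, a short half-line on the correct side---yields \eqref{eq:qldpub}.

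For the lower bound on open $G$, differentiability of $\L_\eta$ supplies, for each $t$ in the range of $\L_\eta'$, a $\l_t$ with $\L_\eta'(\l_t)=t$ and $J_\eta(t) = \l_t t - \L_\eta(\l_t)$. Define the tilted quenched law
\[
\frac{d\tilde P_\w^{\pi,n}}{dP_\w^\pi}\bigg|_{\mathcal{F}_{T_n}} = \frac{e^{\l_t T_n}\ind{T_n<\infty}}{E_\w^\pi[e^{\l_t T_n}\ind{T_n<\infty}]},
\]
and combine the elementary change-of-measure bound
\[
P_\w^\pi(|T_n/n - t|<\delta) \geq e^{-n(\l_t t + |\l_t|\delta)}\, E_\w^\pi[e^{\l_t T_n}\ind{T_n<\infty}]\, \tilde P_\w^{\pi,n}(|T_n/n - t|<\delta)
\]
with \eqref{eq:qlmgflim} to reduce the problem to showing $\tilde P_\w^{\pi,n}(|T_n/n - t|<\delta) \to 1$. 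This tilted law of large numbers is the main obstacle: under $P_\w^\pi$ the increments $T_k - T_{k-1}$ are not independent, and the tilt entangles their dependence structure with the height process at level entries. I expect to handle it via the product-of-random-matrices representation of $E_\w^\pi[e^{\l T_n}\ind{T_n<\infty}]$ developed in Section \ref{sec:qlmgf}: the $\l_t$-tilt corresponds to weighted transfer matrices, and differentiability of the associated top Lyapunov exponent (in the spirit of \cite{fkPRM}) controls the mean $E_{\tilde P}[T_n/n] \to t$, while a second-moment/Chebyshev estimate following \cite{cgzLDP} forces concentration. Sending $\delta \downarrow 0$ then produces the required lower bound on $G$ and completes the weak LDP.
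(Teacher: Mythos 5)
Your upper-bound argument matches the paper's: exponential Chebyshev, split into $\l \geq 0$ and $\l \leq 0$, and use the monotonicity of $J_\eta$ on either side of $t_0 = \L_\eta'(0^-)$ (Lemma \ref{Jetaprop}). That part is fine.

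The gap is in the lower bound, and it is exactly the step you flagged as ``the main obstacle.'' Your tilting scheme assumes that for each $t$ there is a $\l_t$ with $\L_\eta'(\l_t) = t$ and $J_\eta(t) = \l_t t - \L_\eta(\l_t)$. But $\L_\eta'$ is only defined on $(-\infty, \lcrit)$ and its range is $(1, \tcrit)$, where $\tcrit = \lim_{\l \ra \lcrit^-} \L_\eta'(\l)$ can be finite (see \eqref{t0def} and \eqref{Jetadef2}). For $t \geq \tcrit$ there is no tilting parameter at all, yet $J_\eta(t) = \lcrit t - \L_\eta(\lcrit)$ is still finite and the lower bound must be proved there. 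Moreover, even where $\l_t$ exists, your proposed one-step tilt leaves no room on the right to run the exponential Chebyshev bound when $\l_t$ is close to $\lcrit$, and the alternative second-moment route would need control of $\L_\eta''$, which the paper does not establish. The paper sidesteps both problems by truncating first: it replaces $\Phi_k(\l)$ by $\Phi_{k,M}(\l)$ (only paths with $\tau_k \leq M$ for every $k$), so that $\L_{\eta,M}(\l)$ is finite, convex, differentiable for \emph{all} $\l$, and steep (Lemmas \ref{tqlmgflim} and \ref{ltMlem}). Then for any $t \in (1, M-2)$ one can solve $\L_{\eta,M}'(\l_{t,M}) = t$, define the tilted law $Q_{\w,n}^{\l_{t,M},M}$ as in \eqref{QwnlMdef}, prove the tilted LLN \eqref{eq:QwnlMLLN} by one-sided exponential Chebyshev using only the first-derivative relation, and conclude $\liminf \frac{1}{n}\log P_\w^\pi(|T_n - nt| < n\d) \geq -J_{\eta,M}(t)$. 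The missing ingredient in your outline is the final passage $M \ra \infty$: one must show $J_{\eta,M}(t) \downarrow J_\eta(t)$, which the paper does via a compactness argument on the nested level sets $K_{M,t}$ in the proof of \eqref{JMlim}. Without the truncation and this limit, the lower bound is only established on $(1, \tcrit)$, which is strictly less than the theorem requires.
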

\begin{rem}
Recall that a (strong) large deviation principle means that the large deviation upper bound holds for all closed $F$ as well.
 We can only claim a \emph{weak} large deviation principle since it may be the case that $\lim_{t\ra\infty} J_\eta(t) = \inf_t J_\eta(t) > 0$. However, we will show in Lemma \ref{qrfpropTn} below that if the random walk is recurrent or transient to the right then $\inf_t J_\eta(t)=0$, and thus in these cases Theorem \ref{QLDPTn} can easily be strengthened to a full large deviation principle under $P_\w^\pi$.
\end{rem}

To prove the averaged large deviation principle for the hitting times we will need a more restrictive assumption on the distribution $\eta$ on environments.
Let $M_1(\Omega_\k)$ be the set of probability distributions on the set of environments $\Omega_\k$, and let $M_1^s(\Omega_\k)$ (and $M_1^e(\Omega_\k)$) denote the set of stationary (ergodic) distributions $\eta$ on environments; that is, $\{\w_n\}$ is a stationary (ergodic) under $\eta$.

\begin{asm}\label{asmplldp}
 The distribution $\eta \in M_1^e(\Omega_\k)$ satisfies a process level large deviation principle on the space $M_1(\Omega_\k)$ equipped with the topology of weak convergence of probability measures. That is,
\begin{equation}\label{eq:Lndef}
 L_n = \frac{1}{n} \sum_{k=0}^{n-1} \d_{\th^k \w}
\end{equation}
satisfies a large deviation principle with rate function $h(\cdot|\eta)$, the specific relative entropy with respect to $\eta$.
\end{asm}

We will say that a distribution $\eta \in M_1^s(\Omega)$ on environments is locally equivalent to the product of its marginals if for all $n\geq 1$ the joint distribution of $(\w_1,\w_2,\ldots,\w_n)$ is absolutely continuous with respect to the product measure $\eta_0^n$, where $\eta_0$ is the marginal of $\w_0$ under the measure $\eta$.

\begin{asm}\label{asmlocal}
 The distribution $\eta$ on environments is locally equivalent to the product of its marginals, and for every stationary measure $\a \in M_1^s(\Omega_\k)$ there exists a sequence $\a_n \in M_1^e(\Omega_\k)$ of ergodic measures such that $\a_n \ra \a$ (in the topology of weak convergence of probability measures) and $h(\a_n|\eta) \ra h(\a|\eta)$.
\end{asm}

\begin{rem}
 Assumptions \ref{asmplldp} and \ref{asmlocal} were also made for the averaged large deviation principles in in \cite{cgzLDP} and \cite{dgzLDPH}. 
Also, it is known that Assumptions \ref{asmplldp} and \ref{asmlocal} are satisfied if $\eta$ is a Gibbs measure on $\Omega_\k$ with summable, translation invariant interaction potential (see Theorem 4.1 and Lemma 4.8 in \cite{fRFDP}). In particular, Assumptions \ref{asmplldp} and \ref{asmlocal} hold if $\eta$ is an i.i.d.\ measure on environments. 
\end{rem}

Our next main result is a large deviation principle for the hitting times under the averaged measure. 
It is intuitively clear that the averaged rate function should be less than or equal to the corresponding quenched rate function. This is because under the quenched measure large deviations occur due to atypical behavior from the random walk, but under the averaged measure large deviations can occur due to some combination of the choice of an atypical environment and the walk doing some atypical behavior (thus making the averaged large deviation probabilities decay more slowly). The following Theorem extends the variational formula in \cite{cgzLDP} relating the quenched and averaged rate functions that makes the above intuition precise. 

\begin{thm}\label{th:aldpTn}
 Let the distribution on environments $\eta$ satisfy Assumptions \ref{easm}, \ref{asmplldp} and \ref{asmlocal}. Then, for any initial distribution $\pi$ the hitting times $T_n/n$ satisfy a weak large deviation principle under the measure $\P^\pi$ with 
convex, lower semicontinuous 
rate function
\begin{equation}\label{eq:arfTn}
 \mathbb{J}_\eta(t) := 
\inf_{\a \in M_1^e(\Omega_\k)} ( J_\a(t) + h(\a|\eta) ).
\end{equation}
\end{thm}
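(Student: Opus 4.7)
The plan is to follow the general strategy of \cite{cgzLDP}, combining the quenched LDP of Theorem \ref{QLDPTn} with the process-level LDP for the empirical measure $L_n$ of environments (Assumption \ref{asmplldp}). The heuristic is that $\P^\pi(T_n/n \approx t) = E_\eta[P_\w^\pi(T_n/n \approx t)]$, and on the event $\{L_n \approx \a\}$ the environment is asymptotically $\a$-typical, contributing $e^{-n h(\a|\eta)}$ from the process-level LDP and, for such $\w$, a quenched contribution $e^{-n J_\a(t)}$ from Theorem \ref{QLDPTn} applied to $\a$. Optimizing over $\a \in M_1^e(\Omega_\k)$ then produces the variational formula \eqref{eq:arfTn}.

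\textbf{Lower bound.} Fix $t$ in an open set $G$ and an ergodic $\a \in M_1^e(\Omega_\k)$ with $h(\a|\eta) < \infty$. Using Assumption \ref{asmlocal}, the law of $(\w_0,\dots,\w_{n-1})$ under $\a$ is absolutely continuous with respect to its law under $\eta$; let $f_n$ denote the Radon--Nikodym derivative. A standard change of measure plus Jensen's inequality applied to $\log \P^\pi(T_n/n \in G) = \log E_\eta[P_\w^\pi(T_n/n \in G)]$ gives
\[
\liminf_{n\to\infty} \frac{1}{n}\log \P^\pi(T_n/n \in G) \geq \liminf_{n\to\infty} \frac{1}{n} E_\a[\log P_\w^\pi(T_n/n \in G)] - \limsup_{n\to\infty} \frac{1}{n} E_\a[\log f_n].
\]
The first term is bounded below by $-J_\a(t)$ for $t \in G$ by the quenched lower bound under $\a$ (valid since $\a$ is ergodic and satisfies Assumption \ref{easm}), while the second term is exactly $-h(\a|\eta)$. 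Infimizing over $\a$ and $t \in G$ gives the averaged lower bound with rate $\mathbb{J}_\eta$.

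\textbf{Upper bound.} Rather than partition pathwise, it is cleaner to work at the level of moment generating functions, mirroring the derivation of \eqref{eq:qlmgflim}. One establishes
\[
\limsup_{n\to\infty} \frac{1}{n}\log \E^\pi[e^{\l T_n}\ind{T_n<\infty}] \leq \sup_{\a \in M_1^s(\Omega_\k)} (\L_\a(\l) - h(\a|\eta)) =: \bL(\l),
\]
by writing the annealed mgf as $E_\eta$ of the quenched mgf, decomposing $\Omega$ into a finite cover of small balls in $M_1(\Omega_\k)$ around representatives $\a_1,\dots,\a_N$ based on the value of $L_n$, and using the process-level upper bound on $\{L_n \in B_\e(\a_i)\}$ together with a uniform estimate for the quenched mgf in terms of the local environment statistics (via the matrix-product representation developed in Section \ref{sec:qlmgf}). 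Passing to a subsequence, any limit $\a$ of empirical measures along which the supremum is achieved is stationary but possibly non-ergodic; Assumption \ref{asmlocal} then supplies ergodic approximants $\a_n \to \a$ with $h(\a_n|\eta) \to h(\a|\eta)$, which by continuity of $\a \mapsto \L_\a(\l)$ allows us to restrict the supremum to $M_1^e(\Omega_\k)$. A standard Gärtner--Ellis / Chebyshev argument together with the identity $\inf_t \sup_\l(\l t - \bL(\l)) = \inf_\a(J_\a(t) + h(\a|\eta))$ (an exchange of sup and inf valid because $\L_\a$ is convex and $h(\cdot|\eta)$ is affine) converts this into the upper bound for compact sets; exponential tightness (immediate from $T_n \geq n$ on $\{T_n < \infty\}$ together with the annealed tail estimate on $T_n$ supplied by Assumption \ref{easm}) upgrades this to closed sets.

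\textbf{Main obstacles.} The delicate step is the upper bound, specifically two points: (i) obtaining a sufficiently uniform estimate for the quenched mgf on the event $\{L_n \in B_\e(\a)\}$, which requires showing that the product-of-matrices representation for $E_\w^\pi[e^{\l T_n}\ind{T_n<\infty}]$ depends continuously on the empirical measure of $\w$; and (ii) extending $\L_\a$ (defined a priori only for ergodic $\a$ through Theorem \ref{QLDPTn}) to stationary $\a$ in a way that is upper semicontinuous, which is precisely where the ergodic approximation in Assumption \ref{asmlocal} is used. Convexity and lower semicontinuity of $\mathbb{J}_\eta$ then follow from the Fenchel--Legendre representation implicit in the identity $\mathbb{J}_\eta(t) = \sup_\l (\l t - \bL(\l))$.
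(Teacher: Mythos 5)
Your upper bound strategy tracks the paper's closely: the paper makes the ball-cover argument precise by invoking Varadhan's lemma (Lemma 4.3.6 of \cite{dzLDTA}) in Lemma \ref{lem:almgf}, establishes the continuity of $(\l,\a)\mapsto\L_\a(\l)$ that your step (i) requires in Lemma \ref{lem:acont}, and carries out your step (ii) (stationary-to-ergodic reduction via Assumption \ref{asmlocal}) inside the minimax argument of Lemma \ref{lem:arflt}. One small caveat: your claim that exponential tightness is ``immediate'' is not correct when $\lcrit(\eta)=0$, since then $T_n/n$ can have a subexponential right tail; the paper instead proves closed-set upper bounds by combining separate left- and right-tail Chebyshev estimates with convexity of $\mathbb{J}_\eta$, which handles this degenerate case correctly. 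This is a presentational slip rather than a fundamental error, since in that case $\inf_{s\geq t}\mathbb{J}_\eta(s)=0$ and the right-tail upper bound is vacuous.

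Your lower bound, however, takes a genuinely different route from the paper and contains a gap. You change measure only on the environment (from $\eta$ to $\a$), apply Jensen, and then try to invoke the quenched LDP under $\a$ to control the term $\liminf_n \frac{1}{n}E_\a[\log P_\w^\pi(T_n/n\in G)]$. But the quenched LDP gives $\liminf_n \frac{1}{n}\log P_\w^\pi(T_n/n\in G)\geq -\inf_{t\in G}J_\a(t)$ only $\a$-almost surely, and Fatou's lemma runs in the wrong direction here: $X_n := \frac{1}{n}\log P_\w^\pi(T_n/n\in G)$ is bounded above by $0$ but not a priori bounded below, so you cannot conclude $\liminf_n E_\a[X_n]\geq E_\a[\liminf_n X_n]$ without additional control. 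You would need a uniform-in-$\w$ lower bound of the form $P_\w^\pi(T_n/n\in G)\geq e^{-Cn}$, which does follow from Assumption \ref{easm} after some work, but you neither state nor justify this. The paper sidesteps the issue entirely by tilting the \emph{path} measure as well: it works with the truncated measures $Q_{\w,n}^{\l,M}$ of \eqref{QwnlMdef}, for which $T_n\leq Mn$ almost surely, computes $H(\mathbb{Q}_{\a,n}^{\l,M}\,|\,\P^\pi)$ via the entropy chain rule \eqref{eq:entropysum}, and invokes the change-of-measure lower-bound lemma of \cite[Lemma~7]{cgzLDP}. The truncation makes all relevant quantities uniformly bounded, so no interchange-of-limit issue arises, and the limit $M\to\infty$ is taken at the end via \eqref{JMlim}. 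Your approach is shorter if the missing uniform bound is supplied, but as written the interchange is not justified.
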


Continuing to follow the approach of \cite{cgzLDP}, we will use Theorems \ref{QLDPTn} and \ref{th:aldpTn} to deduce quenched and averaged large deviation principles for $X_n/n$. In order to do this we will need not only a large deviation principle for $T_n/n$ but also for $T_{-n}/n$. However, since we have made no assumptions about recurrence or transience in the statements of Theorems \ref{QLDPTn} and \ref{th:aldpTn}, an obvious symmetry argument implies large deviation principles for $T_{-n}/n$ as well. 
To make this precise we introduce the following notation. 
\begin{defn}
For any environment $\w \in \Omega$ let $\w^{\text{Inv}}$ denote the environment induced by reflecting the strip $\Z \times [d]$ in the first coordinate. That is,
\[
 ( q_n(\w^{\text{Inv}}), r_n(\w^{\text{Inv}}),p_n(\w^{\text{Inv}})) = ( p_{-n}(\w), r_{-n}(\w), q_{-n}(\w) ), \quad \forall n \in \Z.
\]
Moreover, for any distribution $\eta \in M_1(\Omega)$, let $\eta^{\text{Inv}}$ be the induced distribution on $\w^{\text{Inv}}$.
\end{defn}
With this notation it is clear that Theorems \ref{QLDPTn} and \ref{th:aldpTn} imply quenched and averaged large deviation principles for $T_{-n}/n$ with rate functions $J_{\eta^{\text{Inv}}}$ and $\mathbb{J}_{\eta^{\text{Inv}}}$, respectively. 
We are now ready to state the quenched and averaged large deviation principles for the speed $X_n/n$ of the RWRE. 

\begin{thm}\label{th:qldpXn}
 Let the distribution on environments $\eta$ satisfy Assumptions \ref{asmerg} and \ref{easm}.
Then, for any initial distribution $\pi$ (even depending on the environment) $X_n/n$ satisfies a large deviation principle under the measure $P_\w^\pi$ with deterministic, lower semicontinuous, convex rate function
\[
 I_\eta(x)
=
\begin{cases}
 x J_\eta(1/x) & x>0 \\
 \lim_{t \ra\infty} J_\eta(t)/t & x = 0 \\
 |x| J_{\eta^{\text{Inv}}}(1/|x|) & x<0.
\end{cases}
\]
\end{thm}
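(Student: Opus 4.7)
The plan is to transfer the quenched hitting-time LDP of Theorem \ref{QLDPTn} (applied both to $T_n$ under $\eta$ and, via the reflection $\w\mapsto\w^{\text{Inv}}$, to $T_{-n}$ with rate function $J_{\eta^{\text{Inv}}}$) to a quenched LDP for the position $X_n$. The basic duality
\[
 \{T_k \le n\} = \{\max_{0\le m\le n} X_m \ge k\}, \qquad \{T_{-k} \le n\} = \{\min_{0\le m\le n} X_m \le -k\},
\]
combined with the uniform ellipticity of Assumption \ref{easm}, lets me compare events $\{X_n/n \approx x\}$ with events $\{T_{\lfloor xn\rfloor}/\lfloor xn\rfloor \approx 1/x\}$ for $x>0$, and symmetrically for $x<0$. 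Since $X_n/n \in [-1,1]$, the weak LDP arising pointwise will automatically be a full LDP.

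For $x>0$, the upper bound follows from $\{X_n \ge (x+\e)n\}\subseteq\{T_{\lceil(x+\e)n\rceil}\le n\}$; rescaling time by $\lceil(x+\e)n\rceil$ and applying Theorem \ref{QLDPTn} together with convexity of $J_\eta$ (to collapse $\inf_{s\le 1/(x+\e)} J_\eta(s)$ or $\inf_{s\ge 1/(x-\e)} J_\eta(s)$ to the appropriate endpoint) gives rate at least $(x+\e)J_\eta(1/(x+\e))$; letting $\e\downarrow 0$ and using lower semicontinuity of $J_\eta$ yields $xJ_\eta(1/x) = I_\eta(x)$. The complementary bound on $\{X_n \le (x-\e)n\}$ when $x-\e<\vp$ comes analogously from $\{T_{\lceil(x-\e)n\rceil}>n\}$. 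For the matching lower bound, I start with the event $\{T_{\lfloor xn\rfloor}/\lfloor xn\rfloor\in(1/x-\d,1/x+\d)\}$, whose quenched probability is at least $e^{-n(xJ_\eta(1/x)+o(1))}$ by Theorem \ref{QLDPTn}, and use the strong Markov property at $T_{\lfloor xn\rfloor}$ together with Assumption \ref{easm} to keep the walk within a window of size $\e n$ around level $\lfloor xn\rfloor$ for the remaining $o(n)$ steps at an $e^{o(n)}$ probabilistic cost. The case $x<0$ is handled symmetrically with $T_{-n}$ and $\eta^{\text{Inv}}$ replacing $T_n$ and $\eta$.

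The case $x=0$ requires separate treatment. I bound $P_\w^\pi(|X_n|\le\e n)$ by splitting according to whether the walk has crossed $\pm\lceil\e n\rceil$: paths that never cross contribute at most $P_\w^\pi(T_{\lceil\e n\rceil}\wedge T_{-\lceil\e n\rceil}>n)$, which by Theorem \ref{QLDPTn} and its $\eta^{\text{Inv}}$ counterpart decays at rate $\e\inf_{t\ge 1/\e} J_\eta(t)$ and its symmetric analogue; letting $\e\downarrow 0$ produces $\lim_{t\to\infty}J_\eta(t)/t=I_\eta(0)$. The contribution from paths that cross and return is absorbed using the strong Markov property and the ellipticity estimates of Assumption \ref{easm}, while the matching lower bound comes from paths constructed to stay near the origin. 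Convexity and lower semicontinuity of $I_\eta$ on $\R$ then follow because $x\mapsto xJ_\eta(1/x)$ is the perspective of the convex function $J_\eta$ and its value at $0$ is precisely the recession slope, so the three pieces glue into a convex, lower semicontinuous function; determinism is inherited directly from Theorem \ref{QLDPTn}.

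The main technical obstacle is the $x=0$ case, specifically controlling paths that reach beyond $\pm\e n$ and return to a neighborhood of the origin, and verifying that $\lim_{x\downarrow 0} xJ_\eta(1/x) = \lim_{t\to\infty}J_\eta(t)/t$ (and symmetrically from the left) so that the piecewise definition of $I_\eta$ is in fact lower semicontinuous at $0$. Both of these rely on the Legendre-transform structure of $J_\eta$ established in Section \ref{sec:qlmgf}, via which $J_\eta$ is the convex conjugate of the differentiable function $\L_\eta$, forcing the needed boundary continuity.
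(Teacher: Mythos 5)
The central gap is in the upper bound for the slowdown/backtracking regime. You claim that the left-tail bound on $\{X_n \le (x-\e)n\}$ for $x-\e<\vp$ ``comes analogously from $\{T_{\lceil(x-\e)n\rceil}>n\}$,'' but the containment runs the wrong way: $\{T_{\lceil y n\rceil}>n\}\subseteq\{X_n \le y n\}$, not the reverse. The event $\{X_n \le yn\}$ also contains all paths that reach level $\lceil yn\rceil$ (or far beyond) at some time $<n$ and then drift back; controlling these backtracking paths is precisely the hard part. You encounter the same issue again at $x=0$, where you say the contribution from ``paths that cross and return is absorbed using the strong Markov property and the ellipticity estimates of Assumption \ref{easm}.'' That step is not routine: after hitting a level $\cl{\e n}$ the walk sees a \emph{shifted} environment, and one needs a uniform exponential slowdown estimate of the form
\[
\limsup_{n\ra\infty}\frac{1}{n}\log\max_i\sup_{\w\in\Omega_\eta}P_\w^{(0,i)}\Bigl(\inf_{m\ge n}X_m\le 0\Bigr)\;\le\;-I_\eta(0),
\]
holding uniformly over environments in the support of $\eta$ and over starting heights. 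This is Lemma \ref{I0ub} in the paper, and its proof is substantial: one first establishes that the three candidate limits (over the worst environment, for $\eta$-a.e.\ $\w$, and from a fixed starting height) coincide via a supermultiplicativity argument combined with ergodicity, and then runs a multi-scale decomposition over the time of the last downcrossing. Without this uniform estimate, the decomposition of $\{X_n\le xn\}$ according to $T_{\fl{xn}}$ in \eqref{Xnqlefttail} cannot be closed, because the residual probability after hitting level $\fl{xn}$ is evaluated at the shifted environment $\th^{\fl{xn}}\w$ and Theorem \ref{QLDPTn} alone gives only an almost-sure statement at the \emph{unshifted} origin. Your proposal treats this as a trivial Markov step, which it is not.

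A secondary gap is in the convexity and continuity of $I_\eta$ at the origin. You invoke the ``perspective of a convex function'' to claim convexity, but $I_\eta$ is glued from the perspectives of two \emph{different} convex functions $J_\eta$ (on $x>0$) and $J_{\eta^{\text{Inv}}}$ (on $x<0$). For the pieces to meet continuously you must show $\lim_{t\ra\infty}J_\eta(t)/t=\lim_{t\ra\infty}J_{\eta^{\text{Inv}}}(t)/t$, i.e.\ $\lcrit(\eta)=\lcrit(\eta^{\text{Inv}})$, which the paper establishes through a nontrivial comparison of $E_\w^{(0,i)}[e^{\l T_{-1}}\ind{T_{-1}<\infty}]$ with the forward hitting times. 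Convexity at $0$ moreover requires $I_\eta(-x)+I_\eta(x)\ge 2I_\eta(0)$, which the paper deduces from the inequality $\L_\eta(\lcrit)+\L_{\eta^{\text{Inv}}}(\lcrit)\le 0$ proved via a matrix-product decomposition of paths that reach level $n$ before $-n$ and then return to $-n$. Neither of these two facts follows automatically from saying that $I_\eta(0)$ is ``the recession slope.''

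The lower bound portion of your proposal is essentially fine, although the paper uses the simpler observation $|X_k-X_{k-1}|\le 1$ to conclude directly that $\{|T_{\fl{xn}}-n|<\d n-1\}\subseteq\{|X_n-xn|<\d n\}$, avoiding any strong Markov and ellipticity bookkeeping at the post-hitting stage.
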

\begin{rem}
It is not clear from the formula above that $I_\eta(x)$ is continuous at $x=0$. However, part of the proof of Theorem \ref{th:qldpXn} will be to show that $\lim_{t \ra\infty} J_\eta(t)/t = \lim_{t \ra\infty} J_{\eta^{\text{Inv}}}(t)/t$. 
\end{rem}

Our final main result is the corresponding averaged large deviation principle for $X_n/n$.

\begin{thm}\label{th:aldpXn}
 Let the distribution on environments $\eta$ satisfy Assumptions \ref{easm} - \ref{asmlocal}.
Then, for any initial distribution $\pi$, $X_n/n$ satisfies a large deviation principle under the measure $\P^\pi$ with lower semicontinuous rate function
\[
 \mathbb{I}_\eta(x)
=
\begin{cases}
 x \mathbb{J}_\eta(1/x) & x>0 \\
 I_\eta(0) & x = 0 \\
 |x| \mathbb{J}_{\eta^{\text{Inv}}}(1/|x|) & x<0.
\end{cases}
\]
\end{thm}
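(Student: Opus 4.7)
The plan is to deduce Theorem \ref{th:aldpXn} from the averaged LDP for hitting times (Theorem \ref{th:aldpTn}) via the position–hitting time duality, mirroring the template of Sections 4--5 of \cite{cgzLDP}. Since the random walk on the strip has nearest-neighbor jumps in its level coordinate, we have the exact identities
\[
 \{X_n \geq k\} = \{T_k \leq n\} \text{ for } k \geq 1, \qquad \{X_n \leq -k\} = \{T_{-k} \leq n\} \text{ for } k \geq 1.
\]
Reflecting the strip in its first coordinate converts $T_{-n}$ under $\P^\pi$ into $T_n$ under the averaged law corresponding to $\eta^{\text{Inv}}$, so Theorem \ref{th:aldpTn} applied to $\eta^{\text{Inv}}$ supplies an averaged LDP for $T_{-n}/n$ with convex, lsc rate $\mathbb{J}_{\eta^{\text{Inv}}}$.

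For $x > 0$ and a small window $[x-\e, x+\e]$, the first identity gives
\[
 \P^\pi(X_n/n \in [x-\e, x+\e]) = \P^\pi(T_{\lceil n(x-\e)\rceil} \leq n) - \P^\pi(T_{\lfloor n(x+\e)\rfloor+1} \leq n),
\]
and the LDP for $T_m/m$ applied with $m \approx nx$ and threshold $1/x$ gives, after a change of variables and using convexity of $\mathbb{J}_\eta$,
\[
 \lim_{\e \downarrow 0} \lim_{n \to \infty} \frac{1}{n} \log \P^\pi(X_n/n \in [x-\e, x+\e]) = -x \, \mathbb{J}_\eta(1/x).
\]
A parallel argument on the negative half line with $T_{-n}/n$ and $\mathbb{J}_{\eta^{\text{Inv}}}$ produces the stated formula on $(-\infty, 0)$. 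Combined with a standard exponential covering / tilting argument this yields matching upper and lower LDP bounds on any Borel subset of $\R \setminus \{0\}$, with lower semicontinuity on $(-\infty,0) \cup (0, \infty)$ inherited from lsc of $\mathbb{J}_\eta$ and $\mathbb{J}_{\eta^{\text{Inv}}}$.

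The delicate case is $x = 0$. Writing $\{|X_n|/n \leq \e\}$ in terms of the events $\{T_{\lfloor n\e\rfloor+1} > n\}$ and $\{T_{-\lfloor n\e\rfloor-1} > n\}$ and taking $\e \downarrow 0$ identifies the rate at the origin as
\[
 \mathbb{I}_\eta(0) = \min\Bigl( \lim_{t \to \infty} \mathbb{J}_\eta(t)/t, \; \lim_{t \to \infty} \mathbb{J}_{\eta^{\text{Inv}}}(t)/t \Bigr).
\]
Invoking the variational formula \eqref{eq:arfTn} together with $h(\eta|\eta) = 0$ and the observation that the entropy term is absorbed when dividing by $t$, one deduces $\lim_{t\to\infty} \mathbb{J}_\eta(t)/t = \lim_{t\to\infty} J_\eta(t)/t$ (and likewise for $\eta^{\text{Inv}}$). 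Combining with the identity $\lim_{t\to\infty} J_\eta(t)/t = \lim_{t\to\infty} J_{\eta^{\text{Inv}}}(t)/t = I_\eta(0)$ established in the proof of Theorem \ref{th:qldpXn} shows $\mathbb{I}_\eta(0) = I_\eta(0)$, and lsc at $0$ then reduces to the matching-limits computation just described.

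The main obstacle is the identification $\mathbb{I}_\eta(0) = I_\eta(0)$: away from the origin the contraction from $T_n/n$ to $X_n/n$ is clean, but at $x=0$ we must verify that no choice of atypical environment (of sublinear entropy cost per unit $t$) can improve upon the quenched zero-speed cost. This amounts to an exchange of limits in the variational formula \eqref{eq:arfTn}, together with uniformity of the $J_\a(t)/t \to I_\a(0)$ convergence in $\a$, which already required careful handling in \cite{cgzLDP,dgzLDPH} and will be the technical heart of the argument.
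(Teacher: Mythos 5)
The key identities you invoke are wrong. You write $\{X_n \ge k\} = \{T_k \le n\}$ for $k\ge 1$, but only the inclusion $\{X_n \ge k\} \subset \{T_k \le n\}$ holds: if the walk hits level $k$ before time $n$ it may later retreat, so $T_k\le n$ does not force $X_n\ge k$. The same asymmetry breaks the asserted identity on the negative half–line, and it makes the displayed formula for $\P^\pi(X_n/n\in[x-\e,x+\e])$ as a difference of two hitting–time probabilities false. Because of this, the contraction from $T_n/n$ to $X_n/n$ is not the clean one-liner you describe; the substantive difficulty (for $x<\vp$) is precisely to control the probability of reaching level $\lfloor xn\rfloor$ and then slowing down or backtracking over the remaining time. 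Your proposal provides no mechanism for this, and a generic ``exponential covering / tilting'' argument will not produce it, because the paper cannot even establish that $\mathbb{I}_\eta$ is convex across $x=0$ when $\lcrit(\eta)>0$ (see the remark following Lemma \ref{lem:asrf}); the convexity you would need to run a naive contraction is exactly what is unavailable.

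What the paper does instead is decompose $\{X_n\le xn\}$ according to the time $T_{\fl{xn}}$ (display \eqref{Xnqlefttail}) and then bound the residual slowdown probability by the \emph{uniform-over-environments} quenched estimate of Lemma \ref{I0ub},
\[
 \limsup_{n\ra\infty} \frac{1}{n}\log\Bigl\{\max_i \sup_{\w\in\Omega_\eta} P_\w^{(0,i)}\bigl( \inf_{m\ge n} X_m\le 0\bigr)\Bigr\} \le -I_\eta(0),
\]
which (crucially) controls the \emph{averaged} slowdown probabilities because the supremum is over the whole support of $\eta$. This is the device your plan is missing and is the technical heart of the upper bound on $[-1,\vp]$. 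By contrast, the piece of your proposal that you flag as ``the main obstacle'' --- identifying $\mathbb{I}_\eta(0)=I_\eta(0)$ --- is actually handled quite directly in the paper: Lemma \ref{lem:arflt} shows $\mathbb{J}_\eta$ is the Legendre transform of $\mathbf{\L}_\eta$, which is finite exactly on $(-\infty,\lcrit(\eta)]$, and this gives $\lim_{t\ra\infty}\mathbb{J}_\eta(t)/t=\lcrit(\eta)=I_\eta(0)$ without any delicate exchange of limits over $\a$; your suggested route via ``entropy absorbed when dividing by $t$'' points at the right kind of input (the variational formula) but is not the argument used and is not developed enough to verify.
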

\begin{rem}
 Again, part of the proof of Theorem \ref{th:qldpXn} will be showing that $\mathbb{I}_\eta$ as defined above is continuous at $x=0$.
Naturally, the variational formula \eqref{eq:arfTn} implies a corresponding variational formula for $\mathbb{I}_\eta(x)$. The only difficulty is proving the variational formula at $x=0$; see \eqref{eq:varformI} below. 
\end{rem}

The structure of the paper is as follows. In Section \ref{sec:qmgf} we introduce the matrices $\Phi_k(\l)$ which are quenched moment generating functions for hitting times that also take into account the height at which the random walk enters a level. 
Then, in Section \ref{sec:qlmgf} we use these matrices to compute the asymptotic log moment generating function $\L_\eta(\l)$ for hitting times as well as a formula for $\L_\eta'(\l)$. 
In Sections \ref{sec:qldpTn} and \ref{sec:aldpTn} we  prove the quenched and averaged large deviation principles for hitting times, and then in Section \ref{sec:ldpXn} we show how to transfer these to obtain quenched and averaged large deviation principles for $X_n/n$. 
We conclude the paper with a short appendix on RWRE on $\Z$ with bounded jumps. It is well known that such RWRE with bounded jumps can be interpreted as a special case of RWRE on a strip. In Appendix \ref{sec:boundedjumps} we examine a natural class of RWRE with bounded jumps to which the results in this paper apply, and we show how to modify the proofs in this paper to RWRE with bounded jumps that do not satisfy the strong uniform ellipticity assumptions in Assumption \ref{easm}.

\subsection{Notation}
Before beginning with the proofs of the main results, we will introduce some notation that will be used throughout the paper.
For vectors $\textbf{x} = (x_1,\ldots, x_d) \in \R^d$, let $\|x\|_\infty = \max_i |x_i|$  and $\|x\|_1 = \sum_i |x_i|$ be the standard $\ell^\infty$ and $\ell^1$ norms, respectively. Also, let a norm on $d\times d$ matrices be given by
\[
 \|A\| = \max_i \sum_j |A_{i,j}|.
\]
If the entries of $A$ are non-negative then $\|A\| = \max_i e_i A \mathbf{1}$, where the vectors $\{e_i\}_{i \in [d]}$ are the standard basis vectors for $\R^d$. We will use this fact without mention at several points throught the paper.
Note that matrix norm defined in this way is the $\ell^1$ operator norm acting on row vectors to the left and the $\ell^\infty$ operator norm acting on column vectors to the right. In particular, for any row vector $\mathbf{x}$ and column vector $\mathbf{y}$ we have $| \mathbf{x} A \mathbf{y} | \leq \|\mathbf{x} \|_1 \|A\| \| \mathbf{y} \|_\infty$.

\noindent\textbf{Acknowledgement} 
We would like to thank to Alex Roiterstein for originally suggesting this problem and for many stimulating discussions on RWRE on the strip. Also, many thanks to 
Ofer Zeitouni for several discussions on large deviation generalities that were helpful in the course of preparing this paper.

\section{Quenched moment generating functions}\label{sec:qmgf}

As mentioned in the introduction, they key point in proving the quenched large deviation principle for the hitting times is to prove the limit \eqref{eq:qlmgflim} which gives the exponential asymptotics of the quenched moment generating functions of the hitting times. In this section, we will prepare the foundation for proving the limit in \eqref{eq:qlmgflim} by introducing some useful notation and deriving some uniform upper and lower bounds on the quenched moment generating functions.

For any environment $\w$, $\l \in \R$ and $k\in\Z$ let $\Phi_k(\l)$ be the $d\times d$ matrix with entries given by
\[
 \Phi_{k}(\l)(i,j) := E_\w^{(k,i)} \left[ e^{\l T_{k+1} } \ind{ T_{k+1}< \infty, \: Y_{T_{k+1}}=j } \right], \quad i,j \in [d].
\]

\begin{lem}\label{lcritlem}
There exists a constant $\lcrit=\lcrit(\eta) \geq 0$ such that if $\l< \lcrit$ then $\Phi_k(i,j)(\l) < \infty$ for all $i,j \in[d]$ and $k \in \Z$, $\eta$-a.s., and if $\l > \lcrit$ then $\Phi_k(i,j)(\l) = \infty$ for all $i,j \in [d]$, and $k\in \Z$, $\eta$-a.s.
\end{lem}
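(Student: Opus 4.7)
The plan is to establish the claimed $0$--$1$ law by combining a matrix-analytic dichotomy at each level with ergodicity. Under Assumption \ref{easm} and for $\l \geq 0$, one has the pointwise lower bound $\Phi_k(\l)(i,j) \geq \Phi_k(0)(i,j) \geq ((I-r_k)^{-1}p_k)(i,j) \geq \k$ from \eqref{entryheightellipticity}. Decomposing $T_{k+1}$ according to the first move out of level $k$ gives the matrix recursion $\Phi_k(\l) = e^\l p_k + e^\l r_k \Phi_k(\l) + e^\l q_k \Phi_{k-1}(\l) \Phi_k(\l)$, which formally rearranges to
\[
\Phi_k(\l) = e^\l (I - e^\l \Psi_k)^{-1} p_k = \sum_{n \geq 0} e^{(n+1)\l} \Psi_k^n \, p_k, \qquad \Psi_k := r_k + q_k \Phi_{k-1}(\l);
\]
the previous lower bound forces $\Psi_k(i,j) \geq \k^2$ entrywise. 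Finally, since $(I-r_k)^{-1}$ is non-negative, \eqref{entryheightellipticity} forces every column of $q_k$ and of $p_k$ to contain at least one strictly positive entry.

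The crux of the argument is the following implication: \emph{if $\Phi_k^\w(\l)$ has any entry equal to $+\infty$, then $\Phi_{k+1}^\w(\l)$ is identically $+\infty$.} To prove it, fix $(m_0,m_0')$ with $\Phi_k(m_0,m_0') = +\infty$; the column-positivity of $q_{k+1}$ supplies some $l^*$ with $q_{k+1}(l^*, m_0) > 0$, whence $\Psi_{k+1}(l^*, m_0') = +\infty$. Combined with $\Psi_{k+1} \geq \k^2$ everywhere, one checks that row $l^*$ and column $m_0'$ of $\Psi_{k+1}^2$ are entirely infinite, and then
\[
\Psi_{k+1}^3(i,j) \geq \Psi_{k+1}^2(i, m_0') \cdot \Psi_{k+1}(m_0', j) = +\infty
\]
for every $(i,j)$. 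Inserting this into the Neumann series displayed above and using column-positivity of $p_{k+1}$ gives $\Phi_{k+1}(\l) \equiv +\infty$.

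Now set $\bar\chi_k(\w, \l) := \ind{\Phi_k^\w(\l)(i,j) < \infty \;\forall i,j \in [d]}$. The shift identity $\Phi_k^\w(\l) = \Phi_0^{\th^k \w}(\l)$ makes $(\bar\chi_k)_{k \in \Z}$ stationary under $\eta$, and the implication above makes it non-increasing in $k$. Any stationary non-increasing $\{0,1\}$-valued sequence is a.s.\ constant ($\bar\chi_0 \geq \bar\chi_1$ pathwise and $\E \bar\chi_0 = \E \bar\chi_1$ force $\bar\chi_0 = \bar\chi_1$ a.s.), so $\bar\chi_0$ is $\th$-invariant a.s., and Assumption \ref{asmerg} then forces it to equal an $\eta$-a.s.\ deterministic constant $c(\l) \in \{0,1\}$. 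Monotonicity of $\l \mapsto \Phi_k(\l)$ makes $c(\l)$ non-increasing in $\l$, and $c(0) = 1$ because $\Phi_k(0) \leq 1$ entrywise; define
\[
\lcrit := \sup\{\l \geq 0 : c(\l) = 1\}.
\]
For $\l < \lcrit$ the lemma is immediate. For $\l > \lcrit$, a.s.\ $\bar\chi_k = 0$ for every $k \in \Z$, so every $\Phi_k^\w(\l)$ has some infinite entry; applying the key implication at level $k-1$ upgrades this to $\Phi_k^\w(\l) \equiv +\infty$ for every $k \in \Z$.

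The main obstacle is the matrix-analytic implication in the second paragraph. Since entries of $\Psi_{k+1}$ may literally be $+\infty$, classical Perron--Frobenius / spectral-radius reasoning is unavailable, and the single infinity must be propagated through the matrix in a bounded number of multiplications using only the uniform strict positivity $\Psi_{k+1} \geq \k^2$ supplied by strong ellipticity. All other ingredients (stationarity, ergodicity, the monotone-stationary-sequence observation, and monotonicity in $\l$) are standard once this dichotomy is in place.
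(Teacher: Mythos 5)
Your proof is correct, and its overall architecture mirrors the paper's: establish that infinitude of $\Phi_k(\l)$ propagates one level to the right, conclude a $\th$-invariant event, invoke ergodicity for the $0$--$1$ law, and use monotonicity in $\l$ to define $\lcrit$. The difference lies in how the propagation step is established. The paper conditions on the event that the walk first visits level $-1$ and then returns, which yields the single inequality \eqref{Nstep}, $\Phi_0(\l)(j,k) \geq \k^2 e^{2\l}\sum_l\Phi_{-1}(\l)(i,l)$ for all $i,j,k$; this one line immediately gives both ``$\|\Phi_0\|<\infty \Rightarrow \|\Phi_{-1}\|<\infty$'' (used for the $0$--$1$ law) and ``$\|\Phi_{-1}\|=\infty \Rightarrow \Phi_0\equiv\infty$'' (used for $\l>\lcrit$). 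You instead derive the full one-step matrix recursion, expand into a Neumann-type series $\sum_n e^{(n+1)\l}\Psi_k^n p_k$, and propagate a single infinite entry through $\Psi_{k+1}^3$ using the entrywise bound $\Psi_{k+1}\geq\k^2$ and column positivity of $q_{k+1},p_{k+1}$. This is correct (you only need the lower bound $\Phi_{k+1}\geq\sum_n e^{(n+1)\l}\Psi_{k+1}^n p_{k+1}$, so no convergence issues arise), but it is more elaborate than the paper's single inequality, which encodes the same probabilistic idea (force an excursion to the left and back) more economically. One place where your write-up is actually \emph{tighter} than the paper's: the paper claims $\bigcap_{n\le 0}\{\|\Phi_n(\l)\|<\infty\}$ is ``invariant under shifts,'' which is only true modulo null sets and relies implicitly on exactly the monotone-stationary observation ($\bar\chi_{k+1}\le\bar\chi_k$ pathwise, equal means, hence a.s.\ equal) that you spell out explicitly.
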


\begin{proof}
Obviously, $\Phi_0(\l)(i,j) < \infty$ for all $i,j \in [d]$ if $\l\leq 0$. Thus, we only need to consider $\l > 0$, in which case Assumptions \ref{easm} implies that 
\begin{align}
 \Phi_0(\l)(j,k) 
&\geq E_\w^{(0,j)}\left[ e^{\l T_1} \ind{ T_{-1} < T_1 < \infty, \, Y_{T_{-1}} = i, \, Y_{T_1} = k } \right]  \nonumber \\
&\geq \k e^\l \sum_{l \in [d]} \Phi_{-1}(\l)(i,l) \Phi_0(\l)(l,k) \label{Nstep1} \\
&\geq \k^2 e^{2\l} \sum_{l \in [d]} \Phi_{-1}(\l)(i,l), \quad \forall i,j,k \in[d]. \label{Nstep}
\end{align}
It follows that $\| \Phi_0(\l) \| < \infty$ implies that $\| \Phi_{-1}(\l) \| < \infty$ and thus 
\[
 \{ \| \Phi_0(\l) \| < \infty \} = \bigcap_{n \leq 0} \{ \| \Phi_n(\l) \| < \infty \}, \quad \eta\text{ - a.s.}
\]
Since $\bigcap_{n \leq 0} \{ \| \Phi_n(\l) \| < \infty \}$ is invariant under (right) shifts of the environment and the distribution $\eta$ on environments is ergodic, we can conclude that $\eta( \|\Phi_0(\l) \| < \infty) \in \{0,1\}$ for any $\l \in \R$. 
Define $\lcrit = \lcrit(\eta) := \sup \{ \l : \eta( \| \Phi_0(\l) \| < \infty ) = 1 \} $. By the monotonicity of $\|\Phi_0(\l)\|$ in $\l$ we have that $\|\Phi_0(\l)\| < \infty$ for all $\l < \lcrit$ and $\eta$-a.e.\ environment $\w$. Since $\eta$ is shift invariant we also have $\|\Phi_n(\l)\| < \infty$ for all $n \in \Z$, $\l < \lcrit$, $\eta$-a.s.
On the other hand, if $\l > \lcrit$, then $\| \Phi_{-1}(\l) \| = \infty$, $\eta$-a.s. 
However, maximizing \eqref{Nstep} over $i$ we obtain that $\Phi_0(\l)(j,k) \geq \k^2 e^{2\l} \|\Phi_{-1}(\l)\| = \infty$ for any $j,k \in [d]$. 
Again, since $\eta$ is shift invariant, this implies that $\Phi_n(j,k)(\l) = \infty$ for all $n \in \Z$, $j,k \in[d]$, $\eta$-a.s.
\end{proof}
\begin{rem}\label{lcritrem}
 Note that the above lemma does not say whether or not $\|\Phi_n(\l)\|$ is finite when $\l = \lcrit$. However, it will follow from the proof of Lemma \ref{PhiUboundlem} below that $\|\Phi_n(\lcrit)\|<\infty$ for all $n$. In fact, Lemma \ref{PhiUboundlem} will even give a uniform upper bound on the entries of $\Phi_n(\lcrit)$. 
\end{rem}

Next, we would like to prove upper and lower bounds on the entries of $\Phi_k(\l)$ when $\l \leq \lcrit$. To this end, we first need the following Lemma which follows easily from the uniform ellipticity assumptions on the environment. 

\begin{lem}\label{lem:fse}
For any $\k \in (0,1/2)$ there exists an integer $N_\k<\infty$ such that for all  $\w \in \Omega_\k$
\[
 P_\w^{(k,i)}( T_{k+1} \leq N_\k, \, Y_{T_{k+1}} = j ) \geq \k/2, 
\quad
P_\w^{(k,i)}( T_{k-1} \leq N_\k, \, Y_{T_{k-1}} = j ) \geq \k/2, 
\quad \forall k \in \Z, \, i,j \in [d]. 
\]
\end{lem}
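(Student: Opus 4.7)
The plan is to bound from below the probability of the desired event by the probability of reaching $(k+1,j)$ during the very first excursion inside level $k$, and then to truncate the time by showing that this excursion is very short with high probability. Starting at $(k,i)$, the probability of exiting level $k$ on the right at height $j$ without first visiting level $k-1$ is exactly the Green-function expression
\[
\sum_{n=0}^\infty (r_k^n p_k)(i,j) \;=\; ((I-r_k)^{-1} p_k)(i,j),
\]
because $(r_k^n p_k)(i,j)$ is the probability of taking $n$ horizontal steps governed by $r_k$ and then stepping to $(k+1,j)$. Assumption \ref{easm} via \eqref{entryheightellipticity} makes this quantity at least $\kappa$.

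Next I would cut this sum at $n = N-1$ and control the tail. By \eqref{onestepellipticity}, each row sum of $p_k + q_k$ is at least $2\kappa$, so $r_k \mathbf{1} \leq (1-2\kappa)\mathbf{1}$ coordinatewise, and by induction $r_k^n \mathbf{1} \leq (1-2\kappa)^n \mathbf{1}$. Since $p_k \mathbf{1} \leq \mathbf{1}$,
\[
\sum_{n=N}^\infty (r_k^n p_k)(i,j) \;\leq\; \sum_{n=N}^\infty (r_k^n \mathbf{1})(i) \;\leq\; \sum_{n=N}^\infty (1-2\kappa)^n \;=\; \frac{(1-2\kappa)^N}{2\kappa}.
\]
Choosing $N_\kappa$ large enough that $(1-2\kappa)^{N_\kappa} \leq \kappa^2$, e.g., $N_\kappa = \lceil 2\log(1/\kappa)/\log(1/(1-2\kappa)) \rceil$, makes the tail at most $\kappa/2$, and hence
\[
P_\w^{(k,i)}(T_{k+1} \leq N_\kappa,\, Y_{T_{k+1}} = j) \;\geq\; \sum_{n=0}^{N_\kappa - 1} (r_k^n p_k)(i,j) \;\geq\; \kappa - \kappa/2 \;=\; \kappa/2,
\]
as required. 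The inequality for $T_{k-1}$ follows by the identical argument with $q_k$ in place of $p_k$, using the second half of \eqref{entryheightellipticity}; note that the same $N_\kappa$ works simultaneously because the tail estimate depends only on $r_k$.

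There is no real obstacle here: the only nontrivial ingredient is the probabilistic interpretation of $(I-r_k)^{-1} p_k$ as the exit-to-the-right distribution without backtracking, which is standard for finite-state absorbing chains and justified by the geometric bound $r_k^n \mathbf{1} \leq (1-2\kappa)^n \mathbf{1}$ ensuring convergence. The only subtle point worth highlighting is that $N_\kappa$ must be chosen uniformly in $k$ and in $\w \in \Omega_\kappa$, which is automatic because both bounds $\kappa$ and $1-2\kappa$ come from the definition of $\Omega_\kappa$ and do not depend on the particular level.
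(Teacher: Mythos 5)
Your proof is correct and follows essentially the same route as the paper: both use \eqref{entryheightellipticity} to get $\k$ as a lower bound on the untruncated right-exit probability and \eqref{onestepellipticity} to get the geometric decay rate $1-2\k$ for the time spent inside a level. The only cosmetic difference is that the paper works with the stopping time $\tau = T_1 \wedge T_{-1}$ and the bound $P_\w^{(0,i)}(\tau > N) \leq (1-2\k)^N$, whereas you write out the exit probability as the matrix series $\sum_n (r_k^n p_k)(i,j)$ and bound the tail of the series by the full geometric sum $(1-2\k)^N/(2\k)$ — a slightly looser constant, which is irrelevant for the lemma.
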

\begin{proof}
Obviously it is enough to prove the lower bounds when starting in level $k=0$. 
For a random walk started at a point $(0,i)$ in level 0 of the strip, let $\tau = T_1 \wedge T_{-1}$ be the exit time of level 0. Conditions \eqref{onestepellipticity} and \eqref{entryheightellipticity} imply that
\begin{equation}\label{tauasm1}
 P_\w^{(0,i)}( \tau > 1 ) \leq 1 - 2\k, \quad \forall i \in [d],
\end{equation}
and
\begin{equation}\label{tauasm2}
 P_\w^{(0,i)}( \xi_\tau = (1,j) ) \geq \k,  \quad P_\w^{(0,i)}( \xi_\tau = (-1,j) ) \geq \k, \quad \forall i,j \in [d].
\end{equation}
Note that iterating the lower bound \eqref{tauasm1} implies that $P_\w^{(0,i)}( \tau > N ) < (1-2\k)^N$ for any non-negative integer $N$. 
Therefore, 
\begin{align*}
 P_\w^{(0,i)}( T_1 \leq N, \, Y_{T_1} = j ) 
&\geq P_\w^{(0,i)}( \xi_\tau = (1,j) ) - P_\w^{(0,i)}( \tau > N ) \\
&\geq \k - (1-2\k)^N. 
\end{align*}
Similarly, $P_\w^{(0,i)}(  T_{-1} \leq N, \, Y_{T_{-1}} = j ) \geq \k - (1-2\k)^N$, and letting $N_\k = \lceil \log(\k/2)/\log(1-2\k) \rceil$ completes the proof of the lemma. 
\end{proof}

The following lemma which gives uniform upper and lower bounds on the entries of $\Phi_k(\l)$ will be crucial throughout the remainder of the paper. 
\begin{lem}\label{PhiUboundlem}
 If $\eta$ satisfies Assumptions \ref{asmerg} and \ref{easm},
then for any $\l \leq \lcrit$ there exists a constant $c_\l \in (0,1]$ (depending only on $\l$ and the choice of $\k$ in Assumption \ref{easm}) such that
\begin{equation}\label{PhiUbound}
 \eta\left( c_\l \leq \Phi_0(\l)(i,j) \leq \frac{1}{c_\l}, \, \forall i,j \in [d] \right) = 1.
\end{equation}
If in addition $\eta$ is locally equivalent to the product of its marginals and we denote by $\Sigma_\eta$ the support of $\w_0 = (q_0,r_0,p_0)$ under $\eta$, then it follows that 
\begin{equation}\label{eq:PhiUboundU}
 c_\l \leq \Phi_0(\l,\w)(i,j) \leq \frac{1}{c_\l}, \quad \forall \w \in \Sigma_\eta^\Z, \,   i,j \in [d], \, \l \leq \lcrit(\eta). 
\end{equation}
\end{lem}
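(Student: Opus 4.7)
The plan is to split the proof into two parts: the $\eta$-a.s.\ bound \eqref{PhiUbound}, and its deterministic extension \eqref{eq:PhiUboundU} under local equivalence.

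For the lower bound in \eqref{PhiUbound} I would invoke Lemma \ref{lem:fse} directly. Since $P_\w^{(0,i)}(T_1 \leq N_\k,\, Y_{T_1}=j) \geq \k/2$ for every $\w \in \Omega_\k$ and $\l T_1 \geq \min(0,\l N_\k)$ on this event, the bound $\Phi_0(\l)(i,j) \geq (\k/2) e^{\min(0,\l N_\k)}$ follows deterministically on $\Omega_\k$, with constant depending only on $\k$ and $\l$.

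For the upper bound, the case $\l \leq 0$ is immediate from $\Phi_0(\l)(i,j) \leq 1$. The case $0 < \l \leq \lcrit$ is the heart of the proof, and my plan is to shift the ellipticity inequality used to derive \eqref{Nstep1} one level to the right, so that for every $i,j,k \in [d]$,
\[
\Phi_1(\l)(j,k) \;\geq\; \k e^\l \bigl(\Phi_0(\l)\Phi_1(\l)\bigr)(i,k) \;\geq\; \k e^\l\, \Phi_0(\l)(i,j)\,\Phi_1(\l)(j,k),
\]
where the second inequality just retains the $m=j$ term in the matrix product $(\Phi_0(\l)\Phi_1(\l))(i,k)=\sum_m \Phi_0(\l)(i,m)\Phi_1(\l)(m,k)$. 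The lower bound just established forces $\Phi_1(\l)(j,k) > 0$ on $\Omega_\k$, and Lemma \ref{lcritlem} forces $\Phi_1(\l)(j,k) < \infty$ $\eta$-a.s.\ for $\l < \lcrit$, so dividing both sides by $\Phi_1(\l)(j,k)$ yields $\Phi_0(\l)(i,j) \leq 1/(\k e^\l)$ $\eta$-a.s. The endpoint $\l=\lcrit$ is handled by monotone convergence in $\l$, which simultaneously confirms the remark after Lemma \ref{lcritlem} that $\|\Phi_n(\lcrit)\| < \infty$. Combining the upper and lower bounds yields \eqref{PhiUbound} with $c_\l$ depending only on $\k$ and $\l$.

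For the deterministic strengthening \eqref{eq:PhiUboundU} I would combine local equivalence with a continuity argument. Under local equivalence, every cylinder set in $\Sigma_\eta^\Z$ determined by finitely many coordinates has positive $\eta$-measure, so the $\eta$-a.s.\ bound \eqref{PhiUbound} already applies on a dense subset of $\Sigma_\eta^\Z$. To upgrade to every $\w \in \Sigma_\eta^\Z$ I would establish continuity of $\w \mapsto \Phi_0(\l,\w)$ on $\Omega_\k$ in the product topology, using the uniform exponential tail $P_\w^{(0,i)}(T_1 > M) \leq (1-\k/2)^{\lfloor M/N_\k \rfloor}$ furnished by Lemma \ref{lem:fse} to render negligible the contribution from walks visiting levels below $-M$. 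The main obstacle is exactly this continuity step: a priori $\Phi_0(\l,\w)$ depends on the entire negative half-strip of $\w$, and the strong uniform ellipticity assumption is precisely what damps the influence of far-away coordinates enough for a finite-window approximation to transfer the bound from a dense subset to all of $\Sigma_\eta^\Z$.
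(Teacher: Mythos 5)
Your treatment of \eqref{PhiUbound} is correct and essentially the paper's argument up to a harmless level shift: the paper cancels $\Phi_0(\l)(j,k)$ from \eqref{Nstep1} to get $\Phi_{-1}(\l)(i,j)\leq 1/\k$ and then invokes shift invariance, whereas you shift the inequality one level to the right and cancel $\Phi_1(\l)(j,k)$ directly. In both cases the cancellation is legitimate because the factor is strictly positive (from the ellipticity lower bound) and finite $\eta$-a.s.\ for $\l<\lcrit$ (from Lemma~\ref{lcritlem}), and the endpoint $\l=\lcrit$ is handled by monotone convergence.

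The gap is in \eqref{eq:PhiUboundU}. You propose to prove continuity of $\w\mapsto\Phi_0(\l,\w)$ on $\Omega_\k$ and quote the probability tail $P_\w^{(0,i)}(T_1>M)\leq(1-\k/2)^{\lfloor M/N_\k\rfloor}$ as the damping mechanism. But for $\l>0$ what must vanish uniformly is the MGF tail $E_\w^{(0,i)}[e^{\l T_1}\ind{T_1>M}]$, and the probability bound only controls this when $\l<-\log(1-\k/2)/N_\k$, a threshold far below the a priori bound $\lcrit\leq-\log(\k^2/2)$ for small $\k$; so the argument breaks down precisely for $\l$ near $\lcrit$. Worse, at an arbitrary $\hat\w\in\Sigma_\eta^\Z$ you do not yet know $\Phi_0(\l,\hat\w)(i,j)<\infty$, so the map you wish to call continuous may not even be real-valued. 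The paper sidesteps both problems with the truncated MGFs $\Phi_{0,M}(\l)$ of \eqref{PhikMdef}: these are finite sums over paths, hence finite and continuous, and depend only on $\w_{(-M,0]}$. Assuming $\Phi_0(\l,\hat\w)(i,j)>1/c_\l$ (possibly $=\infty$), monotone convergence gives $\Phi_{0,M}(\l,\hat\w)(i,j)>1/c_\l$ for large $M$; that condition defines an open, cylinder-measurable set meeting $\Sigma_\eta^\Z$, which therefore has positive product-marginal measure, hence positive $\eta$-measure by local equivalence, contradicting \eqref{PhiUbound}. If you insist on the density route, the right regularity is not continuity but lower semicontinuity: $\Phi_0(\l,\cdot)=\sup_M\Phi_{0,M}(\l,\cdot)$ is an increasing limit of continuous functions, and a lower semicontinuous function bounded by $1/c_\l$ on a dense subset of $\supp\eta=\Sigma_\eta^\Z$ is bounded by $1/c_\l$ everywhere on $\Sigma_\eta^\Z$. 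Either way, the truncations are indispensable; the probability tail alone does not suffice.
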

\begin{proof}
For the remainder of the proof, fix a $\k>0$ that satisfies Assumption \ref{easm} (i.e., $\eta(\w \in \Omega_\k) = 1$). 
First we prove the almost sure upper and lower bounds on $\Phi_0(\l)(i,j)$ in \eqref{PhiUbound}. 
For the lower bound, since $e^{\l T_1} \geq (e^{\l N_\k} \wedge 1)$ on the event $\{T_1 \leq N_\k\}$ we have that
\begin{equation}\label{eq:Ulb}
\begin{split}
 \Phi_0(\l)(i,j) &\geq E_\w^{(0,i)}\left[ e^{\l T_1} \ind{ T_1 \leq N_\k, \, Y_{T_1} = j } \right] \\
&\geq \left( e^{\l N_\k} \wedge 1 \right) P_\w^{(0,i)}( T_1 \leq N_\k, \, Y_{T_1} = j ) \geq \left( e^{\l N_k} \wedge 1 \right) \k/2,
\end{split}
\end{equation}
where the last inequality follows from Lemma \ref{lem:fse}.
For an upper bound, first note that $\Phi_0(\l)(i,j) \leq 1$ if $\l \leq 0$. Thus, we only need to prove a uniform upper bound when $\l \in (0,\lcrit]$.
To this end, note that \eqref{Nstep1} implies that 
\[
 \Phi_0(\l)(j,k) \geq \k e^{\l} \Phi_{-1}(\l)(i,j) \Phi_0(\l)(j,k), \quad \forall i,j,k \in [d]. 
\]
If $\l < \lcrit$ then $\Phi_0(\l)(j,k) \in (0,\infty)$ and we may cancel these terms from both sides of the above inequality to obtain that $\Phi_{-1}(\l)(i,j) \leq (1/\k) e^{-\l} \leq 1/\k$.
Since the law $\eta$ on environments is shift invariant, the same uniform upper bound holds for $\Phi_0(\l)(i,j)$ with probability one. Finally, since $\Phi_0(\l)(i,j) \leq (1/\k)$ for all $\l < \lcrit$, the monotone convergence theorem implies that $\Phi_0(\lcrit)(i,j) \leq (1/\k)$, $\eta$-a.s. This completes the proof of \eqref{PhiUbound} with 
$c_\l = \frac{\k}{2} (e^{\l N_{\k}} \wedge 1 )$. 

Moving now to the proof of \eqref{eq:PhiUboundU}, note that the argument above giving the lower bound on the entries of $\Phi_0(\l)$ only depends on the fact that $\w \in \Omega_\k$, and since obviously $\Sigma_\eta^\Z \subset \Omega_\k$ the lower bound in \eqref{eq:PhiUboundU} also holds. 
To prove the upper bound in \eqref{eq:PhiUboundU} we first introduce some notation. For any $M<\infty$ let $\Phi_{k,M}(\l)$ be the ``truncated'' quenched moment generating functions defined by 
\begin{equation}\label{PhikMdef}
 \Phi_{k,M}(\l)(i,j) = E_\w^{(k,i)}\left[ e^{\l T_{k+1}} \ind{T_{k+1} \leq M, \, Y_{T_{k+1}} = j } \right].
\end{equation}
It is easy to see that $\Phi_{k,M}(\l)$ depends on the environment $\w$ through $\w_{(-M,0]}$ where we use the notation $\w_{(k,\ell]} := (\w_{k+1},\w_{k+2},\ldots,\w_\ell) \in \Sigma^{\ell-k}$ for any environment $\w$ and any $k<\ell$. 
Moreover, the function $\w \mapsto \Phi_{0,M}(\l,\w)(i,j)$ from $\Omega_\k \ra \R$ is continuous for any fixed $M<\infty$ and $i,j \in[d]$ since the quenched expectation in \eqref{PhikMdef} can be expressed as a sum over finitely many paths. 
Thus, for any fixed $M<\infty$, $i,j \in [d]$ and $\l \leq \lcrit(\eta)$ the set 
\[
 G_{M,i,j,\l} = \{ \w \in \Omega_\k: \, \Phi_{0,M}(\l,\w)(i,j) > 1/c_\l \}
\]
is an open subset of $\Sigma^\Z$. 

Fix $\l \leq \lcrit(\eta)$, and assume for contradiction that there 
exists $\hat\w \in \Sigma_\eta^\Z$ and $i,j \in [d]$ with $\Phi_0(\l,\hat\w)(i,j) > 1/c_\l$. 
Then by monotone convergence, $\hat\w \in G_{M,i,j,\l}$ for all $M$ large enough. 
Since the open set $G_{M,i,j,\l}$ intersects $\Sigma_\eta^\Z$ and since $G_{M,i,j,\l}$ is $\s(\w_{(-M,0]})$-measurable, it follows that $(\eta|_0)^M( G_{M,i,j,\l} ) > 0$. However, since $\eta$ is locally equivalent to the product of its marginals this implies that $\eta(G_{M,i,j,\l}) = \eta|_{(-M,0]}(G_{M,i,j,\l}) > 0$ as well. Thus, with $\eta$-positive probability $1/c_\l < \Phi_{0,M}(\l)(i,j) \leq \Phi_0(\l)(i,j)$. Since this contradicts \eqref{PhiUbound}, this proves that the upper bound \eqref{eq:PhiUboundU} does indeed hold. 
\end{proof}

\begin{rem}
 Note that the above proof shows that $\k/2 \leq \Phi_0(\l)(i,j) \leq (1/\k)e^{-\l}$ for all $\l \in [0,\lcrit]$. 
A priori there is nothing preventing $\lcrit$ from being infinite. However, since $\k/2 \leq (1/\k)e^{-\l}$ for $\l \in (0,\lcrit]$ we can conclude that $\lcrit \leq -\log(\k^2/2)$. 
\end{rem}

\begin{rem}\label{rem:PhikMUbounds}
 It will be important below to note that (for $M$ large enough) we can give uniform upper and lower bounds on the entries of the truncated moment generating functions $\Phi_{k,M}(\l)$ as well. It is obvious from the definitions that $\Phi_{k,M}(\l)(i,j) \leq \Phi_k(\l)(i,j)$ so that the same uniform upper bound holds for any $M<\infty$. Moreover, the argument in \eqref{eq:Ulb} giving the uniform lower bound on the entries of $\Phi_k(\l)$ gives the same lower bound on the entries of $\Phi_{k,M}(\l)$ if $M \geq N_\k$. That is, 
\begin{equation}\label{eq:PhikMUbounds}
 c_\l \leq \Phi_{0,M}(\l)(i,j) \leq \frac{1}{c_\l}, \quad \forall \w \in \Sigma_\eta^\Z, \, i,j \in [d], \, \l \leq \lcrit(\eta), \text{ and } M \geq N_\k. 
\end{equation}
\end{rem}

\section{The quenched logarithmic moment generating function for hitting times}\label{sec:qlmgf}

In this section we will prove that the limit $\lim_{n\ra\infty} \frac{1}{n} \log E_\w^\pi[ e^{\l T_n} \ind{T_n < \infty} ]$ exists almost surely and is equal to a deterministic function $\L_\eta(\l)$. Moreover, we will derive a probabilistic formulation of both $\L_\eta(\l)$ and its derivative. 
We begin by expressing the moment generating function of $T_n$ in terms products of the matrices $\Phi_k(\l)$. For ease of notation we introduce the notation
\[
 \Phi_{[m,n]}(\l) = \prod_{k=m}^n \Phi_k(\l), \quad \text{for any } m\leq n.
\]
With this notation, it is easy to see that $E_\w^\pi[ e^{\l T_n} \ind{T_n < \infty} ] =  \pi \Phi_{[0,n-1]}(\l) \one$, where on the right side $\pi = (\pi(1),\pi(2),\ldots,\pi(d))$ is a row vector and $\one$ is a column vector of all 1's. 
To identify the limit of $n^{-1} \log( \pi \Phi_{[0,n-1]}(\l) \one )$, we first need the following Lemma. 

\begin{lem}\label{mudeflem}
If for some $\l \in \R$ and $\w \in \Omega$ there exists a $c>0$ such that $1/c \leq \Phi_k(\l)(i,j) \leq 1/c$ for all $k \in \Z$, $i,j \in [d]$, then 
there exists a sequence of vectors $\{ \mu_n(\l,\w) \}_{n\in\Z}$ such that
\begin{equation}\label{muerror}
 \sup_{\mathbf{0} \neq \pi \geq 0} \left| \frac{\pi \Phi_{[m,n-1]}(\l) }{ \pi \Phi_{[m,n-1]}(\l) \mathbf{1} } - \mu_n(\l,\w) \right\|_1 \leq \frac{2 (1-c^4)^{n-m-1}}{c^4}, \quad \forall n\in\Z, \, m \leq n-1. 
\end{equation}
\end{lem}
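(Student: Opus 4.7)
The plan is to interpret
\[
\nu_n^{(m)}(\pi) := \frac{\pi \, \Phi_{[m,n-1]}(\l)}{\pi \, \Phi_{[m,n-1]}(\l) \, \mathbf{1}}
\]
as a point in the probability simplex $\Delta \subset \R^d$ (this is invariant under positive rescalings of $\pi$, so we may restrict to $\pi \in \Delta$) and to show that $\{\nu_n^{(m)}(\pi)\}_{m \leq n-1}$ is uniformly Cauchy in $\pi$ as $m \to -\infty$; the limit will then automatically be independent of $\pi$ and will serve as $\mu_n(\l,\w)$. The semigroup identity $\nu_n^{(m-1)}(\pi) = \nu_n^{(m)}\bigl(\pi \Phi_{m-1}(\l)/(\pi \Phi_{m-1}(\l)\mathbf{1})\bigr)$ shows that prepending one more matrix on the left restricts the effective input of $\nu_n^{(m)}$ to a shrinking subset of $\Delta$, so the problem reduces to controlling the $\ell^1$-diameter
\[
D(m,n) := \sup_{\pi, \pi' \in \Delta} \|\nu_n^{(m)}(\pi) - \nu_n^{(m)}(\pi')\|_1.
\]

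The core step is the geometric contraction $D(m-1,n) \leq (1-c^4)\, D(m,n)$, which I would establish via Birkhoff's theorem for the Hilbert projective metric. Since $\Phi_k(\l)$ is strictly positive with entries in $[c,1/c]$, its projective ratio $\phi(\Phi_k) := \max_{i,j,k',l} \Phi_k(i,j)\Phi_k(k',l)/(\Phi_k(i,l)\Phi_k(k',j))$ is bounded by $c^{-4}$. Birkhoff then gives a per-matrix Hilbert contraction coefficient $\tanh(\tfrac{1}{4}\log c^{-4}) = (1-c^2)/(1+c^2)$, which is bounded above by $1 - c^4$ for $c \in (0,1]$. Since after one multiplication by $\Phi_k(\l)$ all entries of the output vector are at least $c$, the Hilbert diameter of the post-multiplication set is uniformly finite, and converting from Hilbert to $\ell^1$ distance on the simplex via standard comparison inequalities yields the claimed rate. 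An alternative elementary route is to write $\nu_n^{(m)}(\pi) = \sum_i \hat\pi_i \tilde r_i^{(m,n)}$, where $\tilde r_i^{(m,n)}$ are the normalized rows of $\Phi_{[m,n-1]}(\l)$ and $\hat\pi$ is the reweighting of $\pi$ by the row sums, and then to exploit the pointwise pairwise overlap between rows of $\Phi_{m-1}(\l)$ to show directly that $\max_{i,j}\|\tilde r_i^{(m-1,n)} - \tilde r_j^{(m-1,n)}\|_1 \leq (1-c^4) \max_{i,j}\|\tilde r_i^{(m,n)} - \tilde r_j^{(m,n)}\|_1$.

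Iterating the contraction from $m = n-1$ downward gives $D(m,n) \leq (1-c^4)^{n-m-1} D(n-1,n)$, and the crude bound $D(n-1,n) \leq 2 \leq 2/c^4$ (since $c \in (0,1]$) absorbs into the prefactor. Because $D(m,n) \downarrow 0$ as $m \to -\infty$, the nested closed sets $\overline{\nu_n^{(m)}(\Delta)}$ intersect in a single point, which we declare to be $\mu_n(\l,\w)$. The bound \eqref{muerror} follows because for every $\pi \neq 0$ the vector $\nu_n^{(m)}(\pi)$ lies in $\overline{\nu_n^{(m)}(\Delta)}$ and $\mu_n$ lies in all of them, so the two points are separated by at most $D(m,n)$.

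The main technical obstacle is the contraction estimate with the precise rate $1-c^4$. Birkhoff's theorem yields the rate cleanly once combined with the appropriate Hilbert-to-$\ell^1$ conversion, but the prefactor requires some bookkeeping; the direct combinatorial route has the advantage of staying in the $\ell^1$-category throughout but demands a careful treatment of the non-stochastic renormalization $\pi \mapsto \pi\Phi/(\pi\Phi\mathbf{1})$, which is what forces the prefactor $2/c^4$ rather than a tighter constant.
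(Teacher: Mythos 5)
Your argument is correct but follows a genuinely different path from the paper. The paper does not establish a contraction estimate from scratch: it invokes Lemma~9 of Bolthausen--Goldsheid (quoted as Lemma~\ref{pr}), a ready-made rank-one-plus-error decomposition for products of positive matrices, and simply reads off $\rho_r \geq c^4/d$ from the uniform entry bounds on $\Phi_k(\l)$, after which the error bound $(1-c^4)^{n-m-1}$ falls out. You re-derive the contraction directly on the simplex, which is self-contained and avoids the imported machinery. Of your two routes, the ``elementary'' one is the one that actually delivers the stated constants, and it is worth pinning down where the overlap is taken: writing $v_k = e_k\Phi_{[m,n-1]}(\l)/(e_k\Phi_{[m,n-1]}(\l)\mathbf{1})$ and $r_k = e_k\Phi_{[m,n-1]}(\l)\mathbf{1}$, one has $\nu_n^{(m-1)}(e_i) = \sum_k \gamma_{ik} v_k$ with $\gamma_{ik} = \Phi_{m-1}(\l)(i,k)\,r_k \big/ \sum_\ell \Phi_{m-1}(\l)(i,\ell)\,r_\ell$, so the relevant overlap is between the \emph{reweighted} vectors $\gamma_i$ (which involve the row sums $r_k$), not between the raw normalized rows of $\Phi_{m-1}(\l)$. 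From $c\le \Phi_{m-1}(\l)(i,k)\le 1/c$ one gets $\gamma_{ik}\geq c^2 r_k/\sum_\ell r_\ell$ uniformly in $i$, hence $\tfrac12\|\gamma_i-\gamma_j\|_1\leq 1-c^2 \leq 1-c^4$, and since the $\ell^1$ diameter of a convex hull equals the maximal pairwise distance of its generators, $D(m-1,n)\leq(1-c^4)D(m,n)$ follows; iterating from $D(n-1,n)\leq 2$ recovers (indeed slightly improves, dropping the $1/c^4$ prefactor) the bound \eqref{muerror}. By contrast, the Birkhoff/Hilbert route you lead with gives geometric decay for free but does not hand you the stated $\ell^1$ prefactor: the Hilbert-to-$\ell^1$ comparison on a fixed interior compact set introduces constants of logarithmic, dimension-dependent shape, so the assertion that Birkhoff ``yields the claimed rate'' is an overstatement as written. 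Treat the elementary Dobrushin-type calculation as the proof and the Hilbert-metric discussion as motivation.
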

\begin{rem}
The vectors $\mu_n(\l,\w)$ are necessarily non-negative with entries summing to 1 and thus can be viewed as probability distributions on $[d]$ that depend on the environment $\w$. For convenience of notation we will often suppress the dependence on $\w$ and just write $\mu_n(\l)$ instead. 
\end{rem}

\begin{cor}\label{mudefcor}
 Let $\eta$ be a measure on environments satisfying Assumptions \ref{asmerg} and \ref{easm}. Then the sequence of vectors $\{\mu_n(\l)\}_{n \in \Z}$ from Lemma \ref{mudeflem} is an ergodic sequence. 
\end{cor}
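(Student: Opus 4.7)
The plan is to exhibit the sequence $\{\mu_n(\l,\w)\}_{n\in\Z}$ as a measurable factor of the ergodic dynamical system $(\Omega,\mathcal{F},\eta,\theta)$, so that the induced measure on $(\R^d)^\Z$ is automatically shift-invariant and ergodic. First, note that by Lemma \ref{PhiUboundlem}, Assumptions \ref{asmerg} and \ref{easm} ensure that for $\eta$-a.e.\ $\w$ the hypothesis $c_\l \leq \Phi_k(\l)(i,j) \leq 1/c_\l$ of Lemma \ref{mudeflem} holds for every $k\in\Z$ and $i,j\in[d]$, so the vectors $\mu_n(\l,\w)$ are defined $\eta$-a.s. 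Moreover, \eqref{muerror} realizes $\mu_n(\l,\w)$ as a uniform (in the test direction $\pi$) limit of the explicit measurable expressions $\pi \Phi_{[m,n-1]}(\l,\w)/(\pi\Phi_{[m,n-1]}(\l,\w)\one)$, so $\w \mapsto \mu_n(\l,\w)$ is itself measurable.

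The key step is the commutation identity $\mu_n(\l,\theta\w) = \mu_{n+1}(\l,\w)$. Since $(\theta\w)_j = \w_{j+1}$, the quenched walk in environment $\theta\w$ started at $(k,i)$ is, after a one-step spatial translation to the right, distributionally identical to the walk in $\w$ started at $(k+1,i)$; the hitting time of level $k+1$ for the former matches the hitting time of level $k+2$ for the latter, giving $\Phi_k(\l,\theta\w) = \Phi_{k+1}(\l,\w)$. Taking products yields $\Phi_{[m,n-1]}(\l,\theta\w) = \Phi_{[m+1,n]}(\l,\w)$, and sending $m\to-\infty$ in Lemma \ref{mudeflem} identifies the left-hand limit as $\mu_n(\l,\theta\w)$ and (after reindexing $m' = m+1$ and applying the lemma with $n' = n+1$) the right-hand limit as $\mu_{n+1}(\l,\w)$.

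With this in hand, the map $\Psi:\Omega\to(\R^d)^\Z$ defined $\eta$-a.s.\ by $\Psi(\w) = (\mu_n(\l,\w))_{n\in\Z}$ intertwines $\theta$ with the coordinate shift $S$ on $(\R^d)^\Z$, i.e.\ $\Psi\circ\theta = S\circ\Psi$. The push-forward $\eta\circ\Psi^{-1}$ is therefore $S$-invariant, and because any measurable factor of an ergodic system is ergodic, Assumption \ref{asmerg} forces $\eta\circ\Psi^{-1}$ to be $S$-ergodic; this is exactly the statement that $\{\mu_n(\l)\}_{n\in\Z}$ is an ergodic sequence. The only real substance beyond standard measure-theoretic bookkeeping is the index-shift $\mu_n\circ\theta = \mu_{n+1}$, which is why the main (minor) obstacle is keeping the direction of the shift consistent with the one-sided dependence of $\mu_n$ on the environment to the left of level $n$; I would therefore carry out the passage to the limit in Lemma \ref{mudeflem} explicitly rather than citing it as a black box.
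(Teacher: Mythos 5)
Your proposal is correct and takes essentially the same route as the paper: both invoke Lemma~\ref{PhiUboundlem} to ensure the hypotheses of Lemma~\ref{mudeflem} hold $\eta$-a.s., realize $\mu_n(\l,\w)$ as the $\eta$-a.s.\ limit $\lim_{m\to-\infty} e_i\Phi_{[m,n-1]}(\l)/(e_i\Phi_{[m,n-1]}(\l)\mathbf{1})$, observe the commutation relation $\mu_n(\l,\w)=\mu_0(\l,\theta^n\w)$ (your $\mu_n(\l,\theta\w)=\mu_{n+1}(\l,\w)$ is the same identity), and conclude ergodicity from Assumption~\ref{asmerg}. You simply spell out the measurability and shift-intertwining details a bit more explicitly than the paper does.
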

\begin{proof}
Lemma \ref{PhiUboundlem} implies that the conditions of Lemma \ref{mudeflem} are satisfied for $\l \leq \lcrit$ and $\eta$-a.e.\ environment $\w$ with $c = c_\l$. Thus \eqref{muerror} implies that
\[
 \mu_n(\l) = \lim_{m \ra -\infty} \frac{e_i \Phi_{[m,n-1]}(\l) }{ e_i \Phi_{[m,n-1]}(\l) \mathbf{1} },  \quad \eta\text{ - a.s.},
\]
where the limit doesn't depend on the choice of $i \in [d]$. 
This shows that $\mu_n(\l) = \mu_n(\l,\w)$ is a deterministic function of the environment that commutes with shifts of the environment in the sense that $\mu_n(\l,\w) = \mu_0(\l,\th^n\w)$. Since the environment $\w$ is ergodic by assumption, it follows that $\mu_n(\l)$ is ergodic as well.
\end{proof}

We postpone for the moment the proof of Lemma \ref{mudeflem} and instead show how Corollary \ref{mudefcor} can be used to prove the following statement for the limit of $n^{-1} \log E_\w^\pi[ e^{\l T_n} \ind{T_n < \infty} ]$.  
\begin{lem}\label{qlmgflim}
 For a distribution $\eta$ on environments satisfying Assumptions \ref{asmerg} and \ref{easm}, define
\[
 \L_\eta(\l) =
\begin{cases}
 E_\eta[ \log ( \mu_0(\l) \Phi_0(\l) \mathbf{1}) ] & \l \leq \lcrit \\
 \infty & \l > \lcrit.
\end{cases}
\]
Then for any distribution $\pi$ on $[d]$ for the height of the starting location of the walk ($\pi$ can even be random depending on $\w$),
\begin{equation}\label{qlmgflimeq}
 \lim_{n\ra\infty} \frac{1}{n} \log E_\w^{\pi}\left[ e^{\l T_n} \ind{T_n < \infty} \right] = \L_\eta(\l), \quad \forall \l, \quad \eta\text{ - a.s.}
\end{equation}
\end{lem}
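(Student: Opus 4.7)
The argument splits by whether $\l$ exceeds $\lcrit$. For $\l > \lcrit$ the conclusion is immediate from Lemma \ref{lcritlem}: every entry $\Phi_k(\l)(i,j)$ is infinite $\eta$-a.s., and since $E_\w^\pi[e^{\l T_n}\ind{T_n<\infty}] = \pi \Phi_{[0,n-1]}(\l) \one$ is a sum of products of such entries, at least one summand is a strictly positive multiple of finitely many infinite factors, so the whole quantity is $+\infty$, matching $\L_\eta(\l) = \infty$.

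For $\l \leq \lcrit$ the plan is to take logarithms, telescope the matrix product into a sum of scalars, and apply Birkhoff's ergodic theorem. Setting $u_k := \pi \Phi_{[0,k-1]}(\l)$ with $u_0 := \pi$, the lower bounds in Lemma \ref{PhiUboundlem} give $u_k \one > 0$ for every $k$, so the normalized row vectors $\hat{v}_k := u_k/(u_k \one)$ are well-defined probability distributions on $[d]$. Since $u_{k+1}\one = (u_k \one)(\hat{v}_k \Phi_k(\l) \one)$, telescoping yields
\begin{equation*}
 \frac{1}{n} \log E_\w^\pi\bigl[e^{\l T_n} \ind{T_n<\infty}\bigr] \;=\; \frac{1}{n} \sum_{k=0}^{n-1} \log\bigl( \hat{v}_k \Phi_k(\l) \one \bigr).
\end{equation*}

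The next step is to replace each $\hat{v}_k$ by $\mu_k(\l)$. Lemma \ref{mudeflem} applied with $m=0$, uniformly over $\pi$, gives $\|\hat{v}_k - \mu_k(\l)\|_1 = O\bigl((1-c_\l^4)^{k}\bigr)$, while Lemma \ref{PhiUboundlem} confines the entries of $\Phi_k(\l) \one$ to $[d c_\l, d/c_\l]$, so both $\hat{v}_k \Phi_k(\l)\one$ and $\mu_k(\l) \Phi_k(\l)\one$ lie in that interval. Combining the elementary estimate $|\log a - \log b| \leq |a-b|/\min(a,b)$ with $|\hat{v}_k \Phi_k(\l)\one - \mu_k(\l)\Phi_k(\l)\one| \leq \|\hat{v}_k - \mu_k(\l)\|_1 \cdot \|\Phi_k(\l) \one\|_\infty$ yields a geometrically summable error in $k$, so the full cost of replacement is $O(1)$ and is wiped out by $1/n$.

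To finish, Corollary \ref{mudefcor} identifies $\log(\mu_k(\l) \Phi_k(\l) \one)$ as $\log(\mu_0(\l) \Phi_0(\l) \one) \circ \th^k$ for a bounded measurable functional on the ergodic system $(\Omega, \mathcal{F}, \eta, \th)$, so Birkhoff's theorem delivers
\begin{equation*}
 \frac{1}{n} \sum_{k=0}^{n-1} \log\bigl( \mu_k(\l) \Phi_k(\l) \one \bigr) \;\longrightarrow\; E_\eta\bigl[\log(\mu_0(\l) \Phi_0(\l) \one)\bigr] \;=\; \L_\eta(\l)
\end{equation*}
$\eta$-a.s., which combined with the replacement error completes the proof. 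Independence from $\pi$, including the case $\pi = \pi(\w)$, is automatic because Lemma \ref{mudeflem} holds pointwise in $\w$ uniformly over all nonzero nonnegative row vectors, so $\pi$ never enters the analysis beyond the first normalization. I do not anticipate a serious obstacle; the one point to handle carefully is that the decay rate $(1-c_\l^4)^k$ is deterministically (pathwise) summable, so that the error analysis is entirely pointwise in $\w$ and Birkhoff is needed only for the cleaner $\mu_k$-sequence.
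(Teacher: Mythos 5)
Your argument follows the paper's own route almost exactly: the same telescoping decomposition into the scalars $\hat v_k\Phi_k(\l)\one$ (the paper's $z_k$), the same replacement of $\hat v_k$ by $\mu_k(\l)$ via the geometric error bound \eqref{muerror} from Lemma \ref{mudeflem}, the same $|\log a - \log b|\leq |a-b|/\min(a,b)$ step using the uniform bounds from Lemma \ref{PhiUboundlem}, and the same appeal to Birkhoff's ergodic theorem through Corollary \ref{mudefcor}. Your treatment of $\l>\lcrit$ and of the uniformity over $\pi$ are also in line with the paper.

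The one thing you omit is the argument that upgrades the conclusion from ``for each fixed $\l$, $\eta$-a.s.'' to the stated ``$\eta$-a.s., for all $\l$'' simultaneously: your application of Birkhoff (and of Lemma \ref{lcritlem} for $\l>\lcrit$) produces a null set that a priori depends on $\l$. The paper explicitly handles this by observing that for each $n$ and $\w$ the left side of \eqref{qlmgflimeq} is non-decreasing in $\l$, and that $\L_\eta$ turns out to be continuous on $(-\infty,\lcrit]$ (this is a consequence of the continuity of $\l\mapsto\mu_0(\l)$ proved in Section \ref{diffsec}). One then intersects over rational $\l$ (plus $\lcrit$ itself) and uses monotonicity to extend to all $\l$. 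This is an easy repair but is genuinely needed for the statement as written, and it also depends on a continuity fact about $\L_\eta$ that is only established later, which you would need to cite or front-load.
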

\begin{proof}
First, we claim that it is enough to prove \eqref{qlmgflimeq} holds $\eta$-a.s.\ for any fixed $\l$, and thus for $\eta$-a.e.\ environment the limit holds for all rational $\l$. It will be shown below that the function $\L_\eta(\l)$ is continuous on $(-\infty,\lcrit]$ (this will follow from the fact that $\l \mapsto \mu_0(\l)$ is continuous), and since the left side of \eqref{qlmgflimeq} is a monotone function of $\l$ for every $n$ we can conclude that for $\eta$-a.e.\ environment the limit in \eqref{qlmgflimeq} holds for all $\l$. 

Since $E_\w^\pi[e^{\l T_n} \ind{T_n < \infty} ]$ is finite if and only if $\l \leq \lcrit$, it is enough to consider the case when $\l \leq \lcrit$. For a fixed $\l \leq \lcrit$ note that
\begin{align}
 \log E_\w^\pi\left[ e^{\l T_n} \ind{T_n < \infty} \right]
= \log( \pi \Phi_{[0,n-1]}(\l) \one ) 
&= \log( \pi \Phi_0(\l) \one) + \sum_{k=1}^{n-1} \log\left( \frac{\pi \Phi_{[0,k]}(\l) \one}{ \pi \Phi_{[0,k-1]}(\l) \one} \right) \nonumber \\
&=  \log( \pi \Phi_0(\l) \one) + \sum_{k=1}^{n-1} \log\left( \frac{\pi \Phi_{[0,k-1]}(\l)}{ \pi \Phi_{[0,k-1]}(\l) \one} \Phi_k(\l) \one \right) \nonumber \\
&=: \sum_{k=0}^{n-1} \log( z_k \Phi_k(\l) \one), \label{zsum}
\end{align}
where the last equality is used to define the vectors $z_k$. 
Note that from the formulas for $z_k$ given above it is clear that for $k$ large we should be able to approximate $z_k$ by $\mu_k(\l)$.
Indeed, \eqref{muerror} implies that $\| z_k - \mu_k(\l) \|_1 \leq \frac{2}{c_\l^4} (1-c_\l^4)^{k-1}$.
Now, for any probability vector $\mu$ on $[d]$, Lemma \ref{PhiUboundlem} implies that
\[
 \mu \Phi_k(\l) \mathbf{1} = \sum_{i,j} \mu(i) \Phi_k(\l)(i,j) \geq c_\l d.
\]
Then since $|\log(x) - \log(y)| \leq \frac{|x-y|}{a}$ for any $x,y \geq a$, it follows that
\begin{align}
 | \log( \mu_k(\l) \Phi_k(\l) \mathbf{1} ) - \log( z_k \Phi_k(\l) \mathbf{1} ) |
&\leq \frac{1}{c_\l d} | \mu_k(\l) \Phi_k(\l) \mathbf{1} - z_k \Phi_k(\l) \mathbf{1} | \nonumber \\
&\leq \frac{1}{c_\l d} \|\mu_k(\l) - z_k \|_1 \| \Phi_k(\l) \| 
\leq \frac{ 2}{c_\l^6} (1-c_\l^4)^{k-1}, \label{logbound}
\end{align}
where in the last inequality we used $\| \Phi_k(\l) \| \leq d/c_\l$ which follows from the upper bound on the entries of $\Phi_k(\l)$ in \eqref{PhiUbound}.
Combining \eqref{zsum} and \eqref{logbound} we see that
\begin{equation}\label{eq:qlmgfUbound}
 \left| \frac{1}{n} \log E_\w^\pi[ e^{\l T_n} \ind{T_n < \infty} ] - \frac{1}{n} \sum_{k=0}^{n-1} \log(\mu_k(\l) \Phi_k(\l) \mathbf{1} ) \right|
\leq \frac{1}{n} \sum_{k=0}^{n-1} \frac{ 2}{c_\l^6} (1-c_\l^4)^{k-1}
\leq \frac{2(1-c_\l^4)^{-1}}{c_\l^{10} n},
\end{equation}
and since the expression on the right vanishes as $n\ra\infty$ it is enough to evaluate the limit of the second sum on the left side. However, since the environment $\w$ is ergodic it follows that $\{ \mu_k(\l) \Phi_k(\l) \mathbf{1} \}_{k\in\Z}$ is ergodic as well since both $\mu_k(\l)$ and $\Phi_k(\l)$ are functions of the shifted environment $\th^k \w$. Thus Birkhoff's ergodic theorem implies that
\[
 \lim_{n\ra\infty} \frac{1}{n} \sum_{k=0}^{n-1} \log(\mu_k(\l) \Phi_k(\l) \mathbf{1} ) = E_\eta[ \log(\mu_0(\l) \Phi_0(\l) \mathbf{1} ) ], \quad \eta\text{ - a.s.}
\]
Combining this with \eqref{eq:qlmgfUbound} finishis the proof of the lemma. 
\end{proof}

We now return to the proof of the existence of the vectors $\mu_n(\l)$ and the associated error bounds as stated in Lemma \ref{mudeflem}. 

\begin{proof}[Proof of Lemma \ref{mudeflem}]
The key to the proof of Lemma \ref{mudeflem} is the following Lemma from \cite{bgSRT}.

\begin{lem}[Lemma 9 in \cite{bgSRT}]\label{pr}
 Let $G_n$, $n=1,2,\ldots$ be a sequence of $d\times d$ matrices with all positive entries, and for any $r \geq 2$ let
\[
 \rho_r = \min_{i,j,k} \frac{ G_r(i,j)G_{r-1}(j,k) }{ \sum_{\ell} G_r(i,\ell)G_{r-1}(\ell,k)}.
\]
 If $\sum_{r=2}^\infty \rho_r = \infty$ then there exists a vector $\vec{v} = (v(1),v(2),\ldots,v(d))$ with strictly positive entries adding to 1 such that for any $n\geq 2$
\[
 G_n G_{n-1} \cdots G_1 = D_n
\left\{
\left(
\begin{array}{cccc}
 | & | & & | \\
v(1) & v(2) & \cdots & v(d) \\
 | & | & & |
\end{array}
\right)
+ \e_{n}
\right\},
\]
where $D_n$ is a $d\times d$ positive diagonal matrix and $\e_n$ is a $d\times d$ matrix with $\|\e_n\| \leq \prod_{r=2}^n(1- d \rho_r)$.
\end{lem}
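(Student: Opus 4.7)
The plan is to establish the rank-one asymptotic factorization by showing that the rows of the product, suitably normalized, converge to a common probability vector $v$ at the asserted rate. The approach reformulates the problem in stochastic-kernel language and then uses a Doeblin-type minorization to convert the lower bound $\rho_r$ into a contraction of total-variation distances between rows.

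\textbf{Step 1 (setup).} Write $A_n := G_n \cdots G_1$, $c_n(i) := e_i A_n \mathbf{1}$, and let $v_n^{(i)} := e_i A_n / c_n(i)$ be the $i$-th row normalized to a probability vector. Set $D_n := \mathrm{diag}(c_n(1), \ldots, c_n(d))$. If one can produce a strictly positive probability vector $v$ with $\max_i \|v_n^{(i)} - v\|_1 \leq \prod_{r=2}^n (1 - d\rho_r)$, then defining $\e_n(i,j) := v_n^{(i)}(j) - v(j)$ gives exactly the factorization in the statement, with $\|\e_n\| = \max_i \|v_n^{(i)} - v\|_1$ controlled by the desired product.

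\textbf{Step 2 (kernel recursion).} From $A_n = G_n A_{n-1}$ a direct computation gives
\[
v_n^{(i)} = \sum_j K_n(i,j)\, v_{n-1}^{(j)}, \qquad K_n(i,j) := \frac{G_n(i,j)\, c_{n-1}(j)}{c_n(i)},
\]
and each $K_n$ is row-stochastic since $\sum_j G_n(i,j) c_{n-1}(j) = c_n(i)$. Iterating, $v_n^{(i)} = \sum_\ell [K_n K_{n-1} \cdots K_2](i,\ell)\, v_1^{(\ell)}$, and the problem reduces to controlling how close the rows of the inhomogeneous backwards product $K_n \cdots K_2$ are to one another.

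\textbf{Step 3 (translation of $\rho_r$).} The scaling factors cancel cleanly in ratios, giving for all $i,j,k$
\[
\frac{K_r(i,j)\, K_{r-1}(j,k)}{[K_r K_{r-1}](i,k)} \;=\; \frac{G_r(i,j)\, G_{r-1}(j,k)}{[G_r G_{r-1}](i,k)} \;\geq\; \rho_r.
\]
In words: in every two-step transition of $K_r K_{r-1}$, no intermediate state $j$ can carry less than a $\rho_r$-fraction of the mass.

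\textbf{Step 4 (contraction).} Summing the minorization in Step 3 over the $d$ intermediate states and extracting a common rank-one component from the two-step kernel $P_r := K_r K_{r-1}$ yields, via a Doeblin-type decomposition, the bound $\tfrac12 \|P_r(i,\cdot) - P_r(i',\cdot)\|_1 \leq 1 - d\rho_r$ uniformly in $i,i'$. Telescoping along the inhomogeneous product $K_n \cdots K_2$ (with careful handling of the overlap of consecutive two-step blocks) and combining with the recursion in Step 2 gives, by induction on $n$, the bound $\max_{i,i'}\|v_n^{(i)} - v_n^{(i')}\|_1 \leq \prod_{r=2}^n (1 - d\rho_r)$ up to a harmless constant.

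\textbf{Step 5 (limit).} Since $\sum_r \rho_r = \infty$, the product $\prod_r(1-d\rho_r)$ vanishes, so $\{v_n^{(i)}\}_n$ is Cauchy uniformly in $i$ and converges to a common limit $v$. Strict positivity of $v$ follows from the uniform positivity of the entries of $v_n^{(i)}$, which in turn follows from the minorization: after one two-step block, every coordinate of $v_n^{(i)}$ is bounded below by a positive constant that depends only on $\rho_r$ and the starting vectors.

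\textbf{Main obstacle.} The crux is Step 4: converting the coupled inequality $K_r(i,j) K_{r-1}(j,k) \geq \rho_r [K_r K_{r-1}](i,k)$ into a clean Doeblin decomposition with a common minorizing measure $\nu_r$ that does not depend on the starting row $i$. The natural route --- fix $j$, pull out $K_r(i,j) K_{r-1}(j,\cdot)$ as a rank-one part --- only separates a component that still depends on $i$ through the factor $K_r(i,j)$. It is precisely by summing over all $d$ choices of the intermediate state (each contributing $\rho_r$) that the factor $d\rho_r$ in the contraction rate, rather than merely $\rho_r$, is achieved. Once the two-step contraction is in hand, the telescoping of the rates and the passage to the limit are standard.
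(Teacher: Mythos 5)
Your Steps 1--3 are correct and are exactly the right reformulation: the normalized rows $v_n^{(i)} := e_i A_n / (e_i A_n \one)$ obey the backwards Markov recursion $v_n^{(i)} = \sum_j K_n(i,j)\, v_{n-1}^{(j)}$ with the row-stochastic kernels $K_n(i,j) = G_n(i,j) c_{n-1}(j)/c_n(i)$, and the $\rho_r$-condition survives the passage from $G$'s to $K$'s because the normalizing factors cancel in ratios. The paper does not prove this lemma itself (it cites Bolthausen--Goldsheid), so I am evaluating your argument on its own merits.

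Step 4 has a genuine gap, and the ``main obstacle'' you describe is self-inflicted. ``Summing the minorization in Step 3 over the $d$ intermediate states $j$'' gives $\sum_j K_r(i,j) K_{r-1}(j,k) \geq d\rho_r\, [K_r K_{r-1}](i,k)$, but the left side \emph{is} $[K_r K_{r-1}](i,k)$, so this merely asserts $d\rho_r \leq 1$ and produces no Doeblin decomposition. Moreover, even granting the two-step bound $\tfrac12\|P_r(i,\cdot) - P_r(i',\cdot)\|_1 \leq 1 - d\rho_r$ with $P_r = K_r K_{r-1}$, the submultiplicativity of Dobrushin coefficients only telescopes over \emph{disjoint} factors, so pairing up consecutive kernels yields at best $\prod_m (1 - d\rho_{n-2m})$, roughly half the factors; the ``careful handling of the overlap'' you invoke cannot recover the full product $\prod_{r=2}^n(1 - d\rho_r)$, and the half-product need not tend to zero merely because $\sum_r \rho_r = \infty$ (place all the mass on the indices you did not pick).

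The fix is simpler than what you are attempting: sum the Step 3 inequality over the \emph{final} index $k$ rather than the intermediate index $j$. Since $\sum_k K_{r-1}(j,k) = 1$ and $\sum_k [K_r K_{r-1}](i,k) = 1$, one obtains $K_r(i,j) \geq \rho_r$ for all $i,j$. This is already a one-step Doeblin minorization $K_r(i,\cdot) \geq d\rho_r \cdot (\one/d)$ with the uniform measure as the common minorizing measure, independent of $i$ --- exactly what your ``main obstacle'' paragraph was unsure how to produce. It gives the Dobrushin coefficient bound $\delta(K_r) := \max_{i,i'} \tfrac12\|K_r(i,\cdot) - K_r(i',\cdot)\|_1 \leq 1 - d\rho_r$ directly, and then the telescoping is clean with no overlapping blocks: $\delta(K_n\cdots K_2) \leq \prod_{r=2}^n \delta(K_r) \leq \prod_{r=2}^n(1 - d\rho_r)$. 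For Step 5, note that the Cauchy claim needs the observation that $v_{n}^{(i)} \in \mathrm{conv}\{v_{n-1}^{(j)}\}_j$, so the convex hulls are nested and their diameters shrink to zero; vanishing of the diameter alone does not directly give Cauchy, but the nesting does, and it also immediately places the limit $v$ inside every hull so that $\|v_n^{(i)} - v\|_1$ is controlled by the diameter at time $n$.
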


For fixed $\w \in \Omega$, $\l \in \R$, and $c>0$ satisfying the assumptions of Lemma \ref{mudeflem} and for $n \in \Z$ fixed, we will apply Lemma \ref{pr} with $G_k = \Phi_{n-k}(\l)$. 
The representation of the product $G_k G_{k-1} \cdots G_1$ from Lemma \ref{pr} implies that
\[
 \frac{e_i G_k G_{k-1} \cdots G_1}{e_i G_k G_{k-1} \cdots G_1 \one} = \frac{ \vec{v} + e_i \e_k }{ 1 + e_i \e_k \one }.
\]
The uniform upper and lower bounds on the entries of $\Phi_{n-k}(\l) = G_k$ imply that $\rho_r \geq c^4/d$ for all $r\geq 2$, and so the matrix $\e_k$ has norm $\| \e_k \| \leq (1-c^4)^{k-1}$ for $k\geq 2$.
Thus, it follows that for any $i \in [d]$ and $k\geq 2$,
\begin{align*}
\left\| \frac{e_i G_k G_{k-1} \cdots G_1}{e_i G_k G_{k-1} \cdots G_1 \one} - \vec{v} \right\|_1
&
\leq \|\vec{v} \|_1 \left| \frac{1}{1+e_i \e_k \one} - 1 \right| + \frac{ \| e_i \e_k \|_1 }{ |1+e_i \e_k \one |} 
\leq \frac{2 \| \e_k \|}{1-\|\e_k\|} \\
&\leq \frac{2 (1-c^4)^{k-1}}{1-(1-c^4)^{k-1}} \leq \frac{2(1-c^4)^{k-1}}{c^4}.
\end{align*}
Since $\| e_i G_1/(e_i G_1 \one) \|_1 = \|\vec{v} \|_1= 1$ and $2/c^4 > 2$, the above error bound also holds when $k=1$.
Finally, since for any non-negative vector $\pi \neq \mathbf{0}$ and any non-negative matrix $A$, the vector $\pi A /(\pi A \mathbf{1})$ is a convex combination of the vectors $\{ e_i A / (e_i A \mathbf{1}) \}_{i\in[d]}$, it follows that
\[
 \sup_{\mathbf{0} \neq \pi \geq 0} \left\| \frac{\pi G_k G_{k-1} \cdots G_1}{\pi G_k G_{k-1} \cdots G_1 \one} - \vec{v} \right\|_1
\leq \frac{2(1-c^4)^{k-1}}{c^4}, \quad \forall k\geq 1.
\]
This completes the proof of Lemma \ref{mudeflem} with $\mu_n(\l,\w) = \vec{v}$. 
\end{proof}

\subsection{Differentiability of \texorpdfstring{$\L_\eta(\l)$}{the quenched log moment generating function}}\label{diffsec}

For any environment $\w$ and $i \in [d]$, let $$\L_{\w,i,n}(\l) = \frac{1}{n} \log E_\w^{(0,i)}[ e^{\l T_n} \ind{T_n < \infty} ].$$
For any fixed $i \in [d]$, $n \geq 1$ and $\w$, the function $\L_{\w,i,n}(\l)$ is strictly convex and differentiable (analytic even) on $(-\infty,\lcrit)$.
Since $\L_{\w,i,n}(\l) \ra \L_\eta(\l)$ as $n\ra\infty$, it follows that $\L_\eta(\l)$ is a convex function on $(-\infty,\lcrit)$ but a priori we cannot conclude that $\L_\eta(\l)$ is differentiable on $(-\infty,\lcrit)$.
If we can show that $\L_{\w,i,n}'(\l)$ converges uniformly on compact intervals then it will follow that $\L_\eta(\l)$ is differentiable and that $\L_\eta'(\l) = \lim_{n\ra\infty} \L'_{\w,i,n}(\l)$ \cite[Theorem 7.17]{rPOMA}.

For any $k \geq 1$, define $\tau_k = T_k-T_{k-1}$ when $T_{k-1} < \infty$ and $\tau_k = \infty$ otherwise, so that $T_n = \sum_{k=1}^n \tau_k$.
Then, it's easy to see that for $\l < \lcrit$,
\[
 \L_{\w,i,n}'(\l) =  \frac{1}{n} \left( \frac{ E_{\w}^{(0,i)}[ T_n e^{\l T_n} \ind{T_n < \infty} ] }{ E_\w^{(0,i)}[ e^{\l T_n}  \ind{T_n < \infty}] } \right)
= \frac{1}{n} \sum_{k=1}^n \frac{ E_{\w}^{(0,i)}[ \tau_k e^{\l T_n} \ind{T_n < \infty} ] }{ E_\w^{(0,i)}[ e^{\l T_n}  \ind{T_n < \infty}] }.
\]
Let $\Phi_k'(\l)$ be the term-by-term derivative of the matrix $\Phi_k(\l)$. That is,
\[
 \Phi_k'(\l)(i,j) = E_\w^{(k,i)}\left[ T_{k+1} e^{\l T_{k+1}} \ind{T_{k+1}<\infty,\, Y_{T_{k+1}} = j} \right].
\]
Then, with this notation we have that
\begin{equation}\label{dproduct}
 \L_{\w,i,n}'(\l)
=  \frac{1}{n} \sum_{k=1}^n \frac{ e_i \Phi_{[0,k-2]}(\l) \Phi_{k-1}'(\l) \Phi_{[k,n-1]}(\l) \mathbf{1} } { e_i \Phi_{[0,n-1]}(\l) \mathbf{1} }.
\end{equation}

To approximate \eqref{dproduct} we want to approximate $\Phi_{[k,n-1]}(\l) \mathbf{1}$ in both the numerator and denominator. These terms will grow or decrease exponentially, but if we normalize them they converge. To show this we need the following lemma.
\begin{lem}\label{nudeflem}
If for some $\l \in \R$ and $\w \in \Omega$ there exists a $c>0$ such that $1/c \leq \Phi_k(\l)(i,j) \leq 1/c$ for all $k \in \Z$, $i,j \in [d]$, 
then there exists a sequence of non-negative vectors $\{ \nu_k(\l) \}_{k\in\Z}$ such that
\begin{equation}\label{nuerror}
 \left\|  \frac{\Phi_{[k,n-1]}(\l) \mathbf{1} }{ \mathbf{1}^t \Phi_{[k,n-1]}(\l) \mathbf{1} } - \nu_k(\l) \right\|_\infty \leq \frac{2  (1-c^4)^{n-k-1} }{c^4}, \quad \forall k < n. 
\end{equation}
Moreover, if $\eta$ satisfies assumptions \ref{asmerg} and \ref{easm} then $\nu_k(\l)$ exists with probability 1 for all $\l\leq \lcrit$ and the sequence $\{\nu_k(\l)\}_{k\in\Z}$ is ergodic.
\end{lem}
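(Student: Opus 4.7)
The lemma is the right-action dual of Lemma \ref{mudeflem}: whereas there we tracked how left-multiplication contracts row vectors onto a common row direction $\mu_n(\l)$ as the product is extended to the left, here we must show that right-multiplication contracts the column vector $\mathbf{1}$ onto a common column direction $\nu_k(\l)$ as the product is extended to the right. My plan is to transpose the picture and invoke Lemma \ref{pr} once more.

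Fix $k$ and apply Lemma \ref{pr} to the sequence $G_\ell := \Phi_{k+\ell-1}(\l)^T$, $\ell \geq 1$, so that
\[
 G_{n-k} G_{n-k-1} \cdots G_1 \;=\; \Phi_{n-1}^T \Phi_{n-2}^T \cdots \Phi_k^T \;=\; \Phi_{[k,n-1]}^T.
\]
The two-sided bounds $c \leq \Phi_j(\l)(i,i') \leq 1/c$ transfer directly to the entries of $G_\ell$ and give $\rho_r \geq c^4/d$ exactly as in Lemma \ref{mudeflem}. Lemma \ref{pr} then produces a probability row vector $\vec{v} = \vec{v}(k,\w,\l)$, a positive diagonal $D_{n-k}$, and an error matrix $\e_{n-k}$ with $\|\e_{n-k}\| \leq (1-c^4)^{n-k-1}$ for which $\Phi_{[k,n-1]}^T = D_{n-k}(\mathbf{1}\vec{v} + \e_{n-k})$. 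Transposing yields $\Phi_{[k,n-1]} = (\vec{v}^T \mathbf{1}^T + \e_{n-k}^T) D_{n-k}$.

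Applying both sides to $\mathbf{1}$ and normalizing by $\mathbf{1}^T(\cdot)\mathbf{1}$ cancels the unknown diagonal $D_{n-k}$ and reduces the problem to estimating the single error vector $\vec{\e}^* := S^{-1} \e_{n-k}^T D_{n-k}\mathbf{1}$, where $S := \mathbf{1}^T D_{n-k}\mathbf{1}$. A short computation gives
\[
 \frac{\Phi_{[k,n-1]}\mathbf{1}}{\mathbf{1}^T \Phi_{[k,n-1]}\mathbf{1}} - \vec{v}^T \;=\; \frac{\vec{\e}^* - (\mathbf{1}^T \vec{\e}^*)\,\vec{v}^T}{1 + \mathbf{1}^T \vec{\e}^*}.
\]
Two observations control the right-hand side: $\vec{\e}^*$ is a convex combination (with weights $d_j/S$) of the columns of $\e_{n-k}^T$, hence $\|\vec{\e}^*\|_\infty$ is bounded by the largest entry of $\e_{n-k}$, which is at most $\|\e_{n-k}\|$; and $\mathbf{1}^T \vec{\e}^*$ is the same convex combination of the row sums of $\e_{n-k}$, each of which is bounded in absolute value by $\|\e_{n-k}\|$. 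Combined with $\|\vec{v}^T\|_\infty \leq \|\vec{v}^T\|_1 = 1$ and $1 + \mathbf{1}^T \vec{\e}^* \geq 1 - (1-c^4)^{n-k-1} \geq c^4$ (for $n \geq k+2$; the case $n = k+1$ is trivial since $2/c^4 \geq 2$), this produces the stated bound. Set $\nu_k(\l) := \vec{v}^T$.

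For the last assertion, Lemma \ref{PhiUboundlem} shows that under Assumptions \ref{asmerg} and \ref{easm} the hypothesis of the first part holds with $c = c_\l$ for every $\l \leq \lcrit$ and $\eta$-a.e.\ $\w$, so $\nu_k(\l)$ exists almost surely. The shift-covariance $\Phi_m(\theta^k \w) = \Phi_{m+k}(\w)$ gives $\Phi_{[0,n-1]}(\theta^k \w) = \Phi_{[k,k+n-1]}(\w)$ for every $n$, and passing to the limit yields the cocycle identity $\nu_k(\l,\w) = \nu_0(\l, \theta^k \w)$; ergodicity of $\{\nu_k(\l)\}_{k\in\Z}$ then follows from the ergodicity of $\theta$ under $\eta$. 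The main delicate point I anticipate is the error estimate: a naive union bound on $\mathbf{1}^T \vec{\e}^*$ would introduce an extraneous factor of $d$, so one must recognize it as a convex average of row sums (each bounded by $\|\e_{n-k}\|$) rather than a sum of individually bounded entries.
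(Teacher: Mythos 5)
Your proof is correct and takes the same route as the paper: transpose the picture, apply Lemma~\ref{pr} with $G_j=\Phi_{k+j-1}(\l)^t$, and observe that the error bound on $\|\e_{n-k}\|$ from Lemma~\ref{pr} transfers to the normalized column ratio, with the $\ell^\infty$ bound following from the $\ell^1$ bound on the transposed row vector. The extra detail you supply---writing out $\vec{\e}^{\,*}$ and recognizing it and $\mathbf{1}^T\vec{\e}^{\,*}$ as convex averages of rows and row sums of $\e_{n-k}$---is exactly what is hidden behind the paper's ``similarly to the proof of Lemma~\ref{mudeflem}.''
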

\begin{proof}
For any matrix $A$, let $A^t$ denote the transpose of $A$. Then, 
\[
 \left( \frac{\Phi_{[k,n-1]}(\l) \mathbf{1} }{ \mathbf{1}^t \Phi_{[k,n-1]}(\l) \mathbf{1} } \right)^t
= \frac{ \one^t \Phi_{n-1}(\l)^t \Phi_{n-2}(\l)^t \cdots \Phi_{k+1}(\l)^t \Phi_k(\l)^t }{ \one^t \Phi_{n-1}(\l)^t \Phi_{n-2}(\l)^t \cdots \Phi_{k+1}(\l)^t \Phi_k(\l)^t \one }. 
\]
Then similarly to the proof of Lemma \ref{mudeflem}, the proof of \eqref{nuerror} follows by applying Lemma \ref{pr} with $G_j = \Phi_{k+j-1}(\l)^t$. 
The final claims in the statement of Lemma \ref{nudeflem} follow as in the proof of Corollary \ref{mudefcor}. 
\end{proof}

We'll also need a uniform upper bound on $\| \Phi_k'(\l) \|$ for $\l < \lcrit$.
\begin{lem}\label{dphiUbound}
For any $\l < \lcrit$, there exists a constant $d_\l < \infty$ such that
\[
 \eta\left( \| \Phi_0'(\l) \| \leq d_\l \right) = 1.
\]
\end{lem}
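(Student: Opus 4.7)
The plan is to bound $\Phi_0'(\l)$ entry-by-entry by comparing it to $\Phi_0(\l')$ for some $\l' \in (\l,\lcrit]$, and then invoke the uniform upper bound from Lemma \ref{PhiUboundlem}. Since $\l < \lcrit$, we can fix any $\l' \in (\l,\lcrit]$ and use the elementary inequality $t e^{\l t} \leq \frac{1}{e(\l'-\l)} e^{\l' t}$ for all $t \geq 0$ (this follows from $s e^{-s} \leq 1/e$ applied to $s = (\l'-\l)t$).

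Applying this pointwise inside the expectation defining $\Phi_0'(\l)(i,j) = E_\w^{(0,i)}[T_1 e^{\l T_1} \ind{T_1<\infty,\, Y_{T_1}=j}]$ immediately yields
\[
 \Phi_0'(\l)(i,j) \leq \frac{1}{e(\l'-\l)} \Phi_0(\l')(i,j), \quad \forall i,j \in [d].
\]
By Lemma \ref{PhiUboundlem}, since $\l' \leq \lcrit$, there is a constant $c_{\l'} > 0$ (depending only on $\l'$ and $\k$) such that $\Phi_0(\l')(i,j) \leq 1/c_{\l'}$ for all $i,j \in [d]$, $\eta$-a.s.

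Summing over $j$ and taking the maximum over $i$ in the definition of the matrix norm, we conclude
\[
 \|\Phi_0'(\l)\| = \max_i \sum_j \Phi_0'(\l)(i,j) \leq \frac{d}{e(\l'-\l) c_{\l'}}, \quad \eta\text{-a.s.},
\]
which proves the claim with $d_\l := \frac{d}{e(\l'-\l) c_{\l'}}$.

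There is no real obstacle here: the argument is a one-line convexity-type inequality combined with the already-established uniform moment generating function bounds, and the only mild point to check is that a valid $\l'$ strictly between $\l$ and $\lcrit$ exists — which is automatic from the hypothesis $\l < \lcrit$. This style of trading a derivative for a small shift in the parameter is standard for moment generating functions and is precisely why the lemma is stated only for $\l < \lcrit$ rather than $\l \leq \lcrit$.
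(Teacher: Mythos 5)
Your proof is correct and takes essentially the same approach as the paper: both reduce to bounding the entries and trade the derivative for the moment generating function at a larger parameter, then invoke Lemma \ref{PhiUboundlem}. The only cosmetic difference is that the paper uses the secant-slope convexity bound $\Phi_0'(\l)(i,j) \leq \bigl(\Phi_0(\lcrit)(i,j)-\Phi_0(\l)(i,j)\bigr)/(\lcrit-\l)$, whereas you apply the pointwise inequality $t e^{\l t} \leq \tfrac{1}{e(\l'-\l)} e^{\l' t}$ inside the expectation before integrating; both are instances of the same standard device.
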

\begin{proof}
Since a uniform bound on the entries of the matrix $\Phi_0'(\l)$ implies a uniform bound on the matrix norm it is enough to show a uniform bound on the entries of $\Phi_0'(\l)$. 
Since for any $i,j \in[d]$,  $\Phi_0(\l)(i,j)$ is a convex function of $\l$ we have that
\begin{equation}\label{secbound}
 \Phi_0'(\l)(i,j) \leq \frac{ \Phi_0(\lcrit)(i,j) - \Phi_0(\l)(i,j) }{\lcrit - \l} \leq \frac{(1/c_{\lcrit}) - c_\l}{\lcrit - \l},
\end{equation}
where the last inequality follows from Lemma \ref{PhiUboundlem}.
\end{proof}

Having laid the necessary groundwork, we are ready to prove that $\L_\eta(\l)$ is differentiable and give a formula for the derivative. 
\begin{lem}\label{derivform}
If the distribution $\eta$ on environments satisties Assumptions \ref{asmerg} and \ref{easm}, then
$\L_\eta(\l)$ is continuously differentiable on $(-\infty, \lcrit)$ and
\[
 \L_\eta'(\l) = E_\eta \left[ \frac{ \mu_{0}(\l) \Phi_{0}'(\l) \nu_1(\l) }{ \mu_0(\l) \Phi_0(\l) \nu_1(\l) } \right], \quad \forall \l < \lcrit.
\]
\end{lem}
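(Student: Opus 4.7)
The plan is to work directly from the exact formula \eqref{dproduct} for $\L'_{\w,i,n}(\l)$, rewriting each summand in a form that makes the appearance of $\mu$ and $\nu$ transparent. For $1 \le k \le n$, introduce the row and column vectors
\[
u_k := \frac{e_i \Phi_{[0,k-2]}(\l)}{e_i \Phi_{[0,k-2]}(\l) \one} \quad \text{and} \quad
v_k := \frac{\Phi_{[k,n-1]}(\l) \one}{\one^t \Phi_{[k,n-1]}(\l) \one},
\]
with the convention that an empty matrix product is the identity. These are non-negative with entries summing to $1$, and the scalar normalizations cancel in the ratio \eqref{dproduct}, yielding
\[
\L'_{\w,i,n}(\l) \;=\; \frac{1}{n} \sum_{k=1}^n \frac{u_k\, \Phi_{k-1}'(\l)\, v_k}{u_k\, \Phi_{k-1}(\l)\, v_k}.
\]

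The next step is to replace $u_k$ by $\mu_{k-1}(\l)$ and $v_k$ by $\nu_k(\l)$ in each summand. Lemmas \ref{mudeflem} and \ref{nudeflem} yield geometric error bounds
$\|u_k - \mu_{k-1}(\l)\|_1 \le \tfrac{2}{c_\l^4}(1-c_\l^4)^{k-2}$ and
$\|v_k - \nu_k(\l)\|_\infty \le \tfrac{2}{c_\l^4}(1-c_\l^4)^{n-k-1}$. Combined with the uniform upper bound on $\|\Phi_{k-1}'(\l)\|$ from Lemma \ref{dphiUbound}, the uniform bounds on the entries of $\Phi_{k-1}(\l)$ from Lemma \ref{PhiUboundlem}, and the uniform lower bound $u\, \Phi_{k-1}(\l)\, v \ge c_\l$ valid for any probability vectors $u, v$ (a direct consequence of \eqref{PhiUbound}), the error introduced in the $k$-th summand by this substitution is at most $C_\l \bigl((1-c_\l^4)^{k-2} + (1-c_\l^4)^{n-k-1}\bigr)$ for some constant $C_\l < \infty$. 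Averaging over $k$, the resulting contribution is $O(1/n)$ and vanishes as $n \to \infty$.

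It then remains to take the Cesaro limit of $F_k(\l, \w) := \frac{\mu_{k-1}(\l)\, \Phi_{k-1}'(\l)\, \nu_k(\l)}{\mu_{k-1}(\l)\, \Phi_{k-1}(\l)\, \nu_k(\l)}$. By Corollary \ref{mudefcor} one has $\mu_{k-1}(\l, \w) = \mu_0(\l, \theta^{k-1}\w)$, and by the analogous ergodicity statement in Lemma \ref{nudeflem} one has $\nu_k(\l, \w) = \nu_1(\l, \theta^{k-1}\w)$; since $\Phi_{k-1}$ and $\Phi_{k-1}'$ are also measurable functions of $\theta^{k-1} \w$, the sequence $\{F_k(\l, \cdot)\}_{k \in \Z}$ is stationary and ergodic under $\eta$. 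The same uniform bounds show $F_k$ is bounded and hence in $L^1(\eta)$, so Birkhoff's ergodic theorem gives $\eta$-almost sure convergence to $E_\eta[F_1(\l)]$, which is exactly the claimed expression for $\L_\eta'(\l)$.

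Finally, to upgrade pointwise almost-sure convergence of the derivatives (at each fixed $\l < \lcrit$) to continuous differentiability of $\L_\eta$ on $(-\infty, \lcrit)$, I would appeal to the standard fact that for a sequence of convex functions $\L_{\w,i,n} \to \L_\eta$ converging pointwise, the derivatives converge uniformly on compact subsets of any open interval on which the pointwise limit of the derivatives is continuous, and the limit is necessarily $\L_\eta'$. Continuity of $\l \mapsto \mu_0(\l, \w)$ and $\l \mapsto \nu_1(\l, \w)$ follows from the uniform-in-$\l$ geometric error bounds in Lemmas \ref{mudeflem} and \ref{nudeflem}, which exhibit these as uniform-on-compacts limits of continuous maps of $\l$; continuity of $\Phi_0(\l)$ and $\Phi_0'(\l)$ is immediate; and continuity of the $\eta$-expectation follows by dominated convergence using the uniform bounds. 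The main technical obstacle throughout is tracking that the constants $c_\l$ and $d_\l$ remain bounded (away from $0$ and $\infty$ respectively) on compact subintervals of $(-\infty, \lcrit)$, so that all of the above estimates hold uniformly in $\l$; this is straightforward from the explicit forms in Lemmas \ref{PhiUboundlem} and \ref{dphiUbound}.
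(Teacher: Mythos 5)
Your proposal matches the paper's proof essentially line for line: you rewrite each summand of \eqref{dproduct} with the normalized vectors $u_k, v_k$ exactly as in the paper's display \eqref{altrep}, invoke the geometric error bounds from Lemmas \ref{mudeflem} and \ref{nudeflem} together with the uniform bounds on $\Phi'$ and $\Phi$ to control the substitution error, apply Birkhoff to the stationary ergodic sequence $F_k$, and then use continuity of $\mu_0(\l)$, $\nu_1(\l)$ (obtained from the uniform error bounds exhibiting them as uniform limits of continuous functions) plus the monotonicity/convexity of $\L_{\w,i,n}$ to upgrade pointwise convergence of derivatives to continuous differentiability. The only cosmetic difference is that the paper phrases the final upgrade as ``non-decreasing $\L'_{\w,i,n}$ converging pointwise to a continuous limit converges uniformly on compacts'' rather than appealing abstractly to a convergence theorem for derivatives of convex functions, but these are the same argument.
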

\begin{proof}
As mentioned above, it is enough to show that
\begin{equation}\label{dLwinconv}
 \lim_{n\ra\infty} \L_{\w,i,n}'(\l) = E_\eta \left[ \frac{ \mu_{0}(\l) \Phi_{0}'(\l) \nu_1(\l) }{\mu_0(\l) \Phi_0(\l) \nu_1(\l) } \right], \quad \eta\text{ - a.s.},
\end{equation}
and that the convergence is uniform in $\l$ on compact subsets of $(-\infty,\lcrit)$.
To this end, we first note that since $\L_{\w,i,n}'(\l)$ is continuous and non-decreasing in $\l$, uniform convergence on compact subsets will follow from pointwise convergence if we can show that the proposed limit $E_\eta \left[ \frac{ \mu_{0}(\l) \Phi_{0}'(\l) \nu_1(\l) }{ \mu_0(\l) \Phi_0(\l) \nu_1(\l) } \right]$ is also continuous in $\l$. Since we can uniformly bound each of the terms inside the expectation, it is enough to show that each of these terms is continuous in $\l$. It is obvious from their definitions as quenched expectations that $\Phi_0(\l)$ and $\Phi_0'(\l)$ are continuous in $\l$, but we need to prove that $\mu_0(\l)$ and $\nu_1(\l)$ are continuous in $\l$. To show that $\mu_0(\l)$ is continuous in $\l$, first note that for any $\l,\l' \leq \lcrit$ and any $n \geq 1$, the error bounds in \eqref{muerror} imply that
\begin{align*}
 \| \mu_0(\l) - \mu_0(\l') \|_1 \leq \frac{2}{c_\l^4}(1-c_\l^4)^{n-1} + \frac{2}{c_{\l'}^4}(1-c_{\l'}^4)^{n-1} + \left\| \frac{ e_i \Phi_{[-n,-1]}(\l) }{ e_i \Phi_{[-n,-1]}(\l) \mathbf{1} } - \frac{ e_i \Phi_{[-n,-1]}(\l') }{ e_i \Phi_{[-n,-1]}(\l') \mathbf{1} } \right\|_1.
\end{align*}
Since the proof of Lemma \ref{PhiUboundlem} shows that the constants $c_\l$ are continuous in $\l$, we obtain that 
\[
 \lim_{\l' \ra \l} \|  \mu_0(\l) - \mu_0(\l') \|_1 \leq \frac{4}{c_\l^4}(1-c_\l^4)^{n-1}, \quad \forall \l \leq \lcrit. 
\]
Since this holds for any $n\geq 1$, taking $n\ra\infty$ shows that $\l \mapsto \mu_0(\l)$ is
continuous, $\eta$-a.s. A similar argument shows that $\nu_1(\l)$ is continuous, $\eta$-a.s., and thus that the right side of \eqref{dLwinconv} is continuous for $\l <\lcrit$.

It remains to prove the pointwise convergence in \eqref{dLwinconv}. To this end, note that the terms in the sum on the right of \eqref{dproduct} can be re-written (for $2\leq k \leq n-1$) as
\begin{equation} \label{altrep}
 \frac{ e_i \Phi_{[0,k-2]}(\l) \Phi_{k-1}'(\l) \Phi_{[k,n-1]}(\l) \mathbf{1} }{ e_i  \Phi_{[0,n-1]}(\l) \mathbf{1} }
=
\frac{ \left( \frac{ e_i \Phi_{[0,k-2]}(\l) }{e_i \Phi_{[0,k-2]}(\l)\mathbf{1} } \right) \Phi_{k-1}'(\l) \left( \frac{ \Phi_{[k,n-1]}(\l) \mathbf{1}}{ \mathbf{1}^t \Phi_{[k,n-1]}(\l) \mathbf{1}} \right) }
{ \left( \frac{ e_i \Phi_{[0,k-2]}(\l) }{e_i \Phi_{[0,k-2]}(\l)\mathbf{1} } \right) \Phi_{k-1}(\l) \left( \frac{ \Phi_{[k,n-1]}(\l) \mathbf{1}}{ \mathbf{1}^t \Phi_{[k,n-1]}(\l) \mathbf{1}} \right) }.
\end{equation}
We would like to approximate the numerator of the fraction on the right by $\mu_{k-1}(\l) \Phi_{k-1}'(\l) \nu_k(\l)$ and the denominator by $\mu_{k-1}(\l) \Phi_{k-1}(\l) \nu_k(\l)$. Equations \eqref{muerror}, \eqref{nuerror} and Lemma \ref{dphiUbound} imply that
there exists a constant $C$ depending on $\l$  such that
\begin{align}
 & \left| \left( \frac{ e_i \Phi_{[0,k-2]}(\l) }{e_i \Phi_{[0,k-2]}(\l)\mathbf{1} } \right) \Phi_{k-1}'(\l) \left( \frac{ \Phi_{[k,n-1]}(\l) \mathbf{1}}{ \mathbf{1}^t \Phi_{[k,n-1]}(\l) \mathbf{1}} \right) - \mu_{k-1}(\l) \Phi_{k-1}'(\l) \nu_k(\l) \right| \nonumber \\
&\quad \leq \left\| \frac{ e_i \Phi_{[0,k-2]}(\l) }{e_i \Phi_{[0,k-2]}(\l)\mathbf{1} } - \mu_{k-1}(\l) \right\|_1 \| \Phi_{k-1}'(\l) \| \|\nu_k(\l)\|_\infty \nonumber \\
&\qquad +  \left\| \frac{ e_i \Phi_{[0,k-2]}(\l) }{e_i \Phi_{[0,k-2]}(\l)\mathbf{1} } \right\|_1 \| \Phi_{k-1}'(\l) \| \left\|  \frac{ \Phi_{[k,n-1]}(\l) \mathbf{1}}{ \mathbf{1}^t \Phi_{[k,n-1]}(\l) \mathbf{1}} - \nu_k(\l) \right\|_\infty \nonumber \\
&\quad \leq C (1-c_\l^4)^{k\wedge (n-k)}, \label{numerror}
\end{align}
and similarly there is a constant $C'$ (also depending on $\l$) such that
\begin{align}
& \left|
 \left( \frac{ e_i \Phi_{[0,k-2]}(\l) }{e_i \Phi_{[0,k-2]}(\l)\mathbf{1} } \right) \Phi_{k-1}(\l) \left( \frac{ \Phi_{[k,n-1]}(\l) \mathbf{1}}{ \mathbf{1}^t \Phi_{[k,n-1]}(\l) \mathbf{1}} \right)
 - \mu_k(\l) \Phi_{k-1}(\l) \nu_k(\l) \right|
\leq C' (1-c_\l^4)^{k \wedge (n-k)}. \label{denerror}
\end{align}

The error bounds in \eqref{numerror} and \eqref{denerror} allow us to approximate the numerator and denominator from \eqref{altrep} separately, but in order to approximate the ratio we also need to obtain an upper bound on the numerator terms and a lower bound on the denominator terms. 
Lemma \eqref{dphiUbound} gives a uniform upper bound on the numerator terms, and if we denote the denominator by $\mu \Phi_{k-1}(\l) \nu$ then since the vectors $\mu$ and $\nu$ are both non-negative with entries summing to 1, Lemma \ref{PhiUboundlem} implies that the denominator of the right side of \eqref{altrep} is bounded below by
\[
 \sum_{i,j \in [d]} \mu(i) \Phi_{k-1}(\l)(i,j) \nu(j) \geq \sum_{i,j \in [d]} \mu(i) c_\l \nu(j)
= c_\l.
\]
Thus, with these upper bounds on the numerator and lower bounds on the denominator we can combine \eqref{altrep}, \eqref{numerror} and \eqref{denerror} to conclude that for some constant $C''<\infty$ depending on $\l$ that
\[
 \left | \frac{ e_i \Phi_{[0,k-2]}(\l) \Phi_{k-1}'(\l) \Phi_{[k,n-1]}(\l) \mathbf{1} }{ e_i  \Phi_{[0,n-1]}(\l) \mathbf{1} }
 - \frac{ \mu_{k-1}(\l) \Phi_{k-1}'(\l) \nu_k(\l) }{ \mu_{k-1}(\l) \Phi_{k-1}(\l) \nu_k(\l) } \right| \leq C'' (1-c_\l^4)^{k\wedge (n-k)}.
\]
This is enough to imply that
\begin{align*}
\lim_{n\ra\infty} \L_{\w,i,n}'(\l) 
& = \lim_{n\ra\infty} \frac{1}{n} \sum_{k=1}^n \frac{ \mu_{k-1}(\l) \Phi_{k-1}'(\l) \nu_k(\l) }{  \mu_{k-1}(\l) \Phi_{k-1}(\l) \nu_k(\l) }  \\
&= E_\eta\left[ \frac{ \mu_{0}(\l) \Phi_{0}'(\l) \nu_1(\l) }{  \mu_{0}(\l) \Phi_{0}(\l) \nu_1(\l) } \right], \quad \eta\text{ - a.s.},
\end{align*}
where the last equality follows from Birkhoff's ergodic theorem.
\end{proof}

\subsection{Truncated log moment generating functions}

For certain parts of the proofs of the main results, it will be important to have modified versions of the previous results in this section when the moment generating functions $\Phi_k(\l)$ are replaced by the truncated versions $\Phi_{k,M}(\l)$ as defined in \eqref{PhikMdef}. 
In the following we will use the notation $\Phi_{[m,n],M}(\l) = \Phi_{m,M}(\l) \Phi_{m+1,M}(\l) \cdots \Phi_{n,M}(\l)$ for any $m\leq n$. 
First, we prove corresponding results for truncated versions of $\mu_n(\l)$ and $\nu_n(\l)$ exist.

\begin{lem}\label{muMdeflem}
For every $\w \in \Omega_\k$, $M \geq N_\k$, $\l \in \R$, and $n \in \Z$, there exist vectors $ \mu_{n,M}(\l)$ and $\nu_{n,M}(\l)$ such that
\[
 \mu_{n,M}(\l) = \lim_{m \ra -\infty}  \frac{e_i \Phi_{[m,n-1],M}(\l) }{ e_i \Phi_{[m,n-1],M}(\l) \mathbf{1} },  
\quad\text{and}\quad 
 \nu_{n,M}(\l) = \lim_{m \ra\infty} \frac{ \Phi_{[n,m],M}(\l) \mathbf{1} }{ \mathbf{1}^t \Phi_{[n,m],M}(\l) \mathbf{1} },
\]
where the limit in the definition of $\mu_{n,M}(\l)$ doesn't depend on $i \in [d]$. 
If in addition $\eta$ satisfies Assumptions \ref{asmerg} and \ref{easm} then the sequences $\mu_{n,M}(\l)$ and $\nu_{n,M}(\l)$ are ergodic, and for any $\l \leq \lcrit(\eta)$ the error bounds 
\begin{equation}\label{muMerror}
 \sup_{\mathbf{0} \neq \pi \geq 0} \left| \frac{\pi \Phi_{[m,n-1],M}(\l) }{ \pi \Phi_{[m,n-1],M}(\l) \mathbf{1} } - \mu_{n,M}(\l) \right\|_1 \leq \frac{2 (1-c_\l^4)^{n-m-1}}{c_\l^4}, \quad \forall m < n, 
\end{equation}
and 
\begin{equation}\label{nuMerror}
 \left| \frac{ \Phi_{[n,m],M}(\l) \one }{ \one^t \Phi_{[n,m],M}(\l) \mathbf{1} } - \nu_{n,M}(\l) \right\|_1 \leq \frac{2 (1-c_\l^4)^{m-n}}{c_\l^4}, \quad \forall n\leq m,
\end{equation}
hold for $\eta$-a.e.\ environment $\w$, where the $c_\l$ are the constants from Lemma \ref{PhiUboundlem}. Moreover, 
\[
 \lim_{M\ra\infty} \mu_{n,M}(\l) = \mu_n(\l), \text{ and } \, \lim_{M \ra\infty} \nu_{n,M}(\l) = \nu_n(\l), \quad \eta\text{ - a.s.}, \quad \forall \l \leq \lcrit(\eta).
\]
\end{lem}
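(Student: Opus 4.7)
The plan is to mirror the proofs of Lemma \ref{mudeflem}, Corollary \ref{mudefcor}, and Lemma \ref{nudeflem}, with the matrices $\Phi_k(\l)$ replaced throughout by their truncated counterparts $\Phi_{k,M}(\l)$, and then to show that the truncated objects approximate the untruncated ones uniformly well as $M \to \infty$. Nothing exotic is required; the work is in lining up the right positivity bounds so that the geometric error decay supplied by Lemma \ref{pr} is available.

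First I would handle existence. For any $\w \in \Omega_\k$, $M \geq N_\k$, and $\l \in \R$ the entries of $\Phi_{k,M}(\l)$ admit $(\l,M,\k)$-uniform two-sided positive bounds: the argument of \eqref{eq:Ulb} together with Lemma \ref{lem:fse} gives the lower bound $\Phi_{k,M}(\l)(i,j) \geq (e^{\l N_\k} \wedge 1)\k/2$ (since $N_\k \leq M$), while the deterministic estimate $T_{k+1} \leq M$ on the relevant event yields the upper bound $\Phi_{k,M}(\l)(i,j) \leq e^{(\l\vee 0)M}$. These bounds place $\Phi_{k,M}(\l)$ in the hypothesis of Lemma \ref{pr}, which applied to $G_j = \Phi_{n-j,M}(\l)$ as in the proof of Lemma \ref{mudeflem} produces $\mu_{n,M}(\l)$; applied instead to $G_j = \Phi_{n+j-1,M}(\l)^t$ as in the proof of Lemma \ref{nudeflem} it produces $\nu_{n,M}(\l)$. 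In each case, independence of the limit defining $\mu_{n,M}(\l)$ from the choice of starting vector $e_i$ follows from the $\ell^1$ error estimate of that lemma.

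To upgrade these existence statements to the $\eta$-a.s.\ $M$-independent error bounds \eqref{muMerror} and \eqref{nuMerror} I would invoke Remark \ref{rem:PhikMUbounds}: under Assumption \ref{easm} and $\l \leq \lcrit(\eta)$ the inequality $\Phi_{k,M}(\l)(i,j) \leq \Phi_k(\l)(i,j)$ combined with Lemma \ref{PhiUboundlem} yields the $M$-independent upper bound $1/c_\l$, while the lower bound $c_\l$ holds for every $\w \in \Omega_\k$ with $M \geq N_\k$. With these $M$-independent bounds on the entries of $\Phi_{k,M}(\l)$, the arguments in the proofs of Lemma \ref{mudeflem} and Lemma \ref{nudeflem} transfer verbatim with $c = c_\l$, delivering \eqref{muMerror} and \eqref{nuMerror}. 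Ergodicity of the two sequences follows exactly as in Corollary \ref{mudefcor}: the error bounds realize $\mu_{n,M}(\l,\w)$ and $\nu_{n,M}(\l,\w)$ as measurable functions of the environment satisfying $\mu_{n,M}(\l,\w) = \mu_{0,M}(\l,\th^n \w)$ and $\nu_{n,M}(\l,\w) = \nu_{0,M}(\l,\th^n \w)$, so ergodicity of $\th$ under $\eta$ transfers.

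The convergence $\mu_{n,M}(\l) \to \mu_n(\l)$ (and similarly for $\nu$) as $M \to \infty$ is the step I expect to require the most care. Fix $\l \leq \lcrit(\eta)$ and $m < n$. By monotone convergence $\Phi_{k,M}(\l)(i,j) \uparrow \Phi_k(\l)(i,j)$ as $M \to \infty$, so the finite product $e_i \Phi_{[m,n-1],M}(\l)$ converges entrywise to $e_i \Phi_{[m,n-1]}(\l)$, and since the denominators $e_i \Phi_{[m,n-1],M}(\l)\one$ are bounded below uniformly in $M$ by $c_\l^{n-m}$, the normalized row vectors converge as well. Combining this pointwise-in-$m$ convergence with the error estimates \eqref{muerror} and \eqref{muMerror} via the triangle inequality gives
\[
\limsup_{M\to\infty} \| \mu_{n,M}(\l) - \mu_n(\l) \|_1 \leq \frac{4(1-c_\l^4)^{n-m-1}}{c_\l^4},
\]
and letting $m \to -\infty$ yields $\mu_{n,M}(\l) \to \mu_n(\l)$, $\eta$-a.s. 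The analogous estimate applied to the transposed products gives $\nu_{n,M}(\l) \to \nu_n(\l)$. The main subtlety is precisely this interchange of limits in $M$ and $m$, which is made possible only because the two error bounds are $M$-independent and decay geometrically in $n - m$.
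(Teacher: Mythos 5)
Your proposal is correct, and the existence, error-bound, and ergodicity steps track the paper's proof exactly: both proofs rest on Remark~\ref{rem:PhikMUbounds} (and for $\l>\lcrit(\eta)$ on the crude bounds $(e^{\l N_\k}\wedge 1)\k/2 \leq \Phi_{k,M}(\l)(i,j) \leq e^{(\l\vee 0)M}$ for $M\geq N_\k$), which make the entries of $\Phi_{k,M}(\l)$ uniformly bounded above and below so that Lemma~\ref{pr} applies. Where you diverge is in the proof that $\mu_{n,M}(\l)\to\mu_n(\l)$. The paper argues by compactness: entries of $\mu_{n,M}(\l)$ lie in $[0,1]$, so one extracts a subsequential limit $\mu_n^*(\l)$, then uses the exact cocycle identity $\mu_{n,M}(\l) = \mu_{m,M}(\l)\Phi_{[m,n-1],M}(\l)/(\mu_{m,M}(\l)\Phi_{[m,n-1],M}(\l)\one)$ together with monotone convergence of $\Phi_{k,M}\uparrow\Phi_k$ to show $\mu_n^*(\l)$ satisfies the same normalized-product relation with $\Phi_{[m,n-1]}(\l)$, and then invokes \eqref{muerror} with $\pi=\mu_m^*(\l)$ to conclude $\|\mu_n^*(\l)-\mu_n(\l)\|_1<\e$. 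You instead sandwich $\mu_{n,M}(\l)$ and $\mu_n(\l)$ between the finite-$m$ normalized products via the two $M$-uniform error bounds \eqref{muMerror} and \eqref{muerror}, pass $M\to\infty$ for fixed $m$ by monotone convergence (denominators bounded below uniformly in $M$), and then send $m\to-\infty$. Your route is more direct and avoids subsequences entirely, at the mild cost of relying on having already established \eqref{muMerror} rather than only the bare existence of $\mu_{n,M}$; since the lemma proves \eqref{muMerror} en route anyway, this is a genuine simplification. Both approaches hinge on the same two facts -- the $M$-uniform geometric error bound and the monotone convergence $\Phi_{k,M}\uparrow\Phi_k$ -- and both extend to $\nu_{n,M}(\l)$ by transposition, so the content is equivalent; your version is just a cleaner packaging.
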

\begin{proof}
The key to the proofs of Lemmas \ref{mudeflem} and \ref{nudeflem} were the uniform upper and lower bounds on the entries of $\Phi_k(\l)$ from Lemma \ref{PhiUboundlem}. 
However, as noted in Remark \ref{rem:PhikMUbounds} above, for $M \geq N_\k$ and $\l \leq \lcrit(\eta)$ the same uniform upper and lower bounds holds for the entries of $\Phi_{k,M}(\l)$ and $\Phi_k(\l)$. Moreover, there are uniform upper and lower bounds on the entries of $\Phi_{k,M}(\l)$ when $\l > \lcrit(\eta)$ as well since 
\[
 e^\l \k \leq e^\l P_\w^{(0,i)}( T_1 \leq M, \, Y_{T_1} = j ) \leq \Phi_{0,M}(\l)(i,j) \leq e^{\l M}, \quad \forall \w \in \Omega_\k, \, \l > 0, \, M > N_\k. 
\]
This shows that the limits defining $\mu_{n,M}(\l)$ and $\nu_{n,M}(\l)$ exist. Moreover, since the uniform and lower bounds are the same for $\Phi_k(\l)$ and $\Phi_{k,M}(\l)$ when $\l \leq \lcrit(\eta)$ the error bound in \eqref{muMerror} is the same as the one in \eqref{muerror}.

Finally, we will show that $\mu_{n,M}(\l) \ra \mu_n(\l)$ (the proof that $\nu_{n,M}(\l) \ra \nu_n(\l)$ is similar).
Since the entries of $\mu_{n,M}(\l)$ are bounded, let $M_k \ra \infty$ be a subsequence where the limit exists and denote the limit by $\mu_n^*(\l)$.
By Lemma \ref{mudeflem}, for any $\e>0$ we can be choose $m = m(n,\e,\l) < n$ so that any probability distribution $\pi$ on $[d]$ satisfies
\begin{equation}\label{munl1e}
 \left\| \frac{\pi \Phi_{[m,n-1]}(\l) }{ \pi \Phi_{[m,n-1]}(\l) \mathbf{1} } - \mu_n(\l) \right\|_1 < \e.
\end{equation}
Now, for this $m$ fixed there exists a further subsequence $M_k'$ of $M_k$ such that $\lim_{k\ra\infty} \mu_{m,M_k'}(\l)$ also exists, and we will denote this limit by $\mu_m^*(\l)$. The definition of $\mu_{k,M}(\l)$ ensures that
\[
 \frac{ \mu_{m,M}(\l) \Phi_{[m,n-1],M}(\l) }{ \mu_{m,M}(\l) \Phi_{[m,n-1],M}(\l) \mathbf{1} } = \mu_{n,M}(\l), 
\]
and by taking limits of this equality along the subsequence $M_k'$ we obtain that
\[
 \frac{ \mu_m^*(\l) \Phi_{[m,n-1]}(\l) }{ \mu_m^*(\l) \Phi_{[m,n-1]}(\l) \mathbf{1} } = \mu_n^*(\l) .
\]
Finally, applying \eqref{munl1e} with $\pi = \mu_m^*(\l)$ we can conclude that $\|\mu_n^*(\l) - \mu_n(\l)\|_1 < \e$. Since $\e>0$ was arbitrary we conclude that $\mu_n^*(\l) = \mu_n(\l)$ and so any subsequential limit of $\mu_{n,M}(\l)$ must equal $\mu_n(\l)$. 
\end{proof}

Next, we prove a truncated version of Lemmas \ref{qlmgflim} and \ref{derivform}. 
\begin{lem}\label{tqlmgflim}
 For any distribution $\pi$ (even depending on $\w$) for the height of the initial location of the random walk,
\[
 \lim_{n\ra\infty} \frac{1}{n} \log E_\w^{\pi} \left[ e^{\l T_n} \ind{\tau_k \leq M, \, k=1,2,\ldots n} \right]
= E_\eta\left[ \log \left( \mu_{0,M}(\l) \Phi_{0,M}(\l) \mathbf{1} \right) \right]
=: \L_{\eta,M}(\l), \quad \eta\text{ - a.s}.
\]
$\L_{\eta,M}(\l)$ is convex in $\l$ and continuously differentiable for all $\l \in \R$ with
\[
 \L_{\eta,M}'(\l)
= E_\eta \left[ \frac{ \mu_{0,M}(\l) \Phi_{0,M}'(\l) \nu_{1,M}(\l) }{ \mu_{0,M}(\l) \Phi_{0,M}(\l) \nu_{1,M}(\l) } \right].
\]
Moreover, $\lim_{M\ra\infty} \L_{\eta,M}(\l) = \L_\eta(\l)$ for all $\l \in \R$ and $\lim_{M\ra\infty} \L_{\eta,M}'(\l) = \L_\eta'(\l)$ for all $\l < \lcrit$.
\end{lem}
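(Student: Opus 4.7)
The plan is to mirror the proofs of Lemmas \ref{qlmgflim} and \ref{derivform}, substituting the truncated matrices $\Phi_{k,M}(\l)$ and vectors $\mu_{k,M}(\l)$, $\nu_{k,M}(\l)$ throughout. The key advantage of truncation is that for any fixed $M \geq N_\k$ and any $\l \in \R$ the entries of $\Phi_{k,M}(\l)$ admit the uniform upper and lower bounds from Remark \ref{rem:PhikMUbounds} together with the deterministic upper bound $\Phi_{k,M}(\l)(i,j) \leq e^{(\l\vee 0)M}$, and its termwise derivative satisfies $\Phi_{k,M}'(\l)(i,j) \leq M e^{(\l\vee 0)M}$. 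Consequently every quantity in the proofs of Lemmas \ref{qlmgflim} and \ref{derivform} remains finite for every $\l\in\R$ rather than only for $\l\leq\lcrit$.

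For the first limit I would write
\[
\log E_\w^\pi[e^{\l T_n}\ind{\tau_k\leq M,\,k\leq n}] = \log(\pi\,\Phi_{[0,n-1],M}(\l)\,\mathbf{1}) = \sum_{k=0}^{n-1}\log(z_{k,M}\,\Phi_{k,M}(\l)\,\mathbf{1}),
\]
with $z_{k,M}$ the normalized row vector exactly as in \eqref{zsum}. The bound \eqref{muMerror} (adapted with $(M,\l)$-dependent constants when $\l>\lcrit$, which does not affect the geometric decay in $k$) gives $\|z_{k,M}-\mu_{k,M}(\l)\|_1$ decaying exponentially in $k$; combined with the entrywise lower bound on $\Phi_{k,M}(\l)$ this yields a telescoping error analogous to \eqref{eq:qlmgfUbound}, and Birkhoff's theorem applied to the ergodic sequence $\{\mu_{k,M}(\l)\Phi_{k,M}(\l)\mathbf{1}\}_{k\in\Z}$ identifies the limit as $\L_{\eta,M}(\l)$. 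Convexity is inherited from the convex prelimits $n^{-1}\log E_\w^{(0,i)}[e^{\l T_n}\ind{\tau_k\leq M}]$. Continuous differentiability and the derivative formula follow by repeating Section \ref{diffsec} verbatim with $(\mu_{k,M},\nu_{k,M},\Phi_{k,M},\Phi_{k,M}')$ in place of $(\mu_k,\nu_k,\Phi_k,\Phi_k')$: the bound on $\Phi_{k,M}'(\l)$ replaces Lemma \ref{dphiUbound}, the analogues of \eqref{numerror} and \eqref{denerror} hold with the same exponential-in-$k\wedge(n-k)$ decay, and uniform convergence on compact subsets of $\R$ comes from monotonicity in $\l$ of the convex derivatives together with continuity of the identified limit (continuity of $\l\mapsto\mu_{0,M}(\l)$ and $\l\mapsto\nu_{1,M}(\l)$ is proved exactly as in the untruncated case).

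For the convergence $\L_{\eta,M}(\l)\to\L_\eta(\l)$ I would split on $\l$. When $\l\leq\lcrit$, Lemma \ref{PhiUboundlem} dominates both $\Phi_{0,M}(\l)$ and $\Phi_0(\l)$, while $\Phi_{0,M}(\l)\to\Phi_0(\l)$ entrywise by monotone convergence and $\mu_{0,M}(\l)\to\mu_0(\l)$ by Lemma \ref{muMdeflem}; dominated convergence then gives the claim. When $\l>\lcrit$, Lemma \ref{lcritlem} gives $\Phi_0(\l)(i,j)=\infty$ for every $i,j$ and the monotone increase $\Phi_{0,M}(\l)(i,j)\uparrow\infty$ as $M\to\infty$, so that the crude lower bound
\[
\mu_{0,M}(\l)\,\Phi_{0,M}(\l)\,\mathbf{1} \;\geq\; \sum_{j\in[d]}\min_{i\in[d]}\Phi_{0,M}(\l)(i,j)
\]
is monotone in $M$ and goes to $+\infty$ pointwise in $\w$. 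The monotone convergence theorem then forces $\L_{\eta,M}(\l)\to+\infty = \L_\eta(\l)$. For the derivatives with $\l<\lcrit$, dominated convergence is applied directly to the explicit formula for $\L_{\eta,M}'(\l)$, using the uniform bounds of Lemmas \ref{PhiUboundlem} and \ref{dphiUbound}, together with the convergences $\mu_{0,M}(\l)\to\mu_0(\l)$, $\nu_{1,M}(\l)\to\nu_1(\l)$, $\Phi_{0,M}(\l)\to\Phi_0(\l)$, and the easy entrywise $\Phi_{0,M}'(\l)\to\Phi_0'(\l)$ (monotone in $M$).

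The main obstacle is the $\l>\lcrit$ piece of the third claim: the uniform bounds of Lemma \ref{PhiUboundlem} fail there, so dominated convergence is unavailable and one cannot directly pass the limit inside $E_\eta[\,\cdot\,]$. The crude lower bound displayed above is what makes the monotone convergence theorem applicable; everything else is a routine reprise of Sections \ref{sec:qmgf}--\ref{diffsec} in the truncated setting.
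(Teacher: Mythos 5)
Your proposal is correct and follows essentially the same route as the paper: reuse the proofs of Lemmas \ref{qlmgflim} and \ref{derivform} verbatim with the truncated matrices and vectors (using that $\Phi_{k,M}(\l)$ still has uniform upper and lower entrywise bounds for every $\l\in\R$ once $M\geq N_\k$), then handle the $M\to\infty$ convergence by bounded convergence for $\l\leq\lcrit$ and by the monotone convergence theorem applied to a crude lower bound for $\l>\lcrit$. The only cosmetic difference is that the paper bounds $\log(\mu_{0,M}\Phi_{0,M}\one)$ below by $\min_i\log\bigl(\sum_j\Phi_{0,M}(\l)(i,j)\bigr)$ rather than $\log\bigl(\sum_j\min_i\Phi_{0,M}(\l)(i,j)\bigr)$; both are valid and both go to $+\infty$ monotonically by $\Phi_{0,M}(\l)\uparrow\Phi_0(\l)=\infty$.
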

\begin{proof}
Since we can represent the expectation as a matrix product by
\[
  E_\w^{\pi} \left[ e^{\l T_n} \ind{\tau_k \leq M, \, k=1,2,\ldots n} \right] = \pi \Phi_{0,M}(\l)\Phi_{1,M}(\l) \cdots \Phi_{n-1,M}(\l) \mathbf{1},
\]
the proof that the limit exists and the formula for the limit is the same as in the proof of Lemma \ref{qlmgflim} and depends only on the uniform upper and lower bounds on the entries of $\Phi_{k,M}(\l)$. 
Similarly, the proof of the formula for $\L_{\eta,M}'(\l)$ is essentially unchanged from the proof of Lemma \ref{derivform}. 

To show that $\L_{\eta,M}(\l) \ra \L_\eta(\l)$, first note that $\Phi_{0,M}(\l) \ra \Phi_0(\l)$ as $M\ra\infty$ by the monotone convergence theorem. If $\l \leq \lcrit$, then Lemma \ref{muMdeflem} and the bounded convergence theorem imply that 
\[
\lim_{M\ra\infty} E_\eta\left[ \log( \mu_{0,M}(\l) \Phi_{0,M}(\l) \mathbf{1} ) \right]
 =  E_\eta\left[ \log( \mu_{0}(\l) \Phi_{0}(\l) \mathbf{1} ) \right]
,\quad \forall \l \leq \lcrit.
\]
For $\l > \lcrit$ we need to show that $\lim_{M\ra\infty} \L_{\eta,M}(\l) = \infty$. To this end, note that
\[
 \L_{\eta,M}(\l) \geq E_\eta\left[ \min_{i \in [d]} \log\left( \sum_{j\in[d]} \Phi_{0,M}(\l)(i,j) \right) \right].
\]
Then, since $\Phi_{0,M}(\l) \nearrow \Phi_0(\l)$ as $M \nearrow \infty$ and $\sum_{j\in[d]} \Phi_0(\l)(i,j) = \infty$ for any $i \in [d]$ when $\l > \lcrit$, the monotone convergence theorem implies that $\lim_{M\ra\infty} \L_{\eta,M}(\l) = \infty$.

To prove that $ \L_{\eta,M}'(\l) \ra \L_\eta'(\l)$ for $\l < \lcrit$, first note that $\mu_{0,M}(\l) \ra \mu_0(\l)$, $\nu_{1,M}(\l) \ra \nu_1(\l)$, $\Phi_{0,M}(\l) \ra\ \Phi_0(\l)$ and $\Phi_{0,M}'(\l) \ra \Phi_0'(\l)$ as $M \ra\infty$ for any $\l < \lcrit$.
The uniform bounds in \eqref{eq:PhikMUbounds} and the proof of Lemma \ref{dphiUbound} give uniform upper bounds on the entries of $\Phi_{0,M}'(\l)$ that do not depend on $M$. Combining this with \eqref{eq:PhikMUbounds} and the fact that $\mu_{0,M}(\l)$ and $\nu_{1,M}(\l)$ are non-negative with entries summing to $1$, we conclude by the bounded convergence theorem that
\[
 \lim_{M\ra\infty} E_\eta \left[ \frac{ \mu_{0,M}(\l) \Phi_{0,M}'(\l) \nu_{1,M}(\l) }{ \mu_{0,M}(\l) \Phi_{0,M}(\l) \nu_{1,M}(\l) } \right] =
E_\eta \left[ \frac{ \mu_{0}(\l) \Phi_{0}'(\l) \nu_{1}(\l) }{ \mu_{0}(\l) \Phi_{0}(\l) \nu_{1}(\l) } \right].
\]
\end{proof}

\section{Proof of the quenched LDP for hitting times}\label{sec:qldpTn}

Having proved the necessary facts about the quenched log moment generating function $\L_\eta(\l)$, we will now give the details of the proof of the quenched LDP for hitting times as stated in Theorem \ref{QLDPTn}.
We will begin by first collecting a few necessary facts about the rate function $J_\eta$ (recall that $J_\eta$ was defined in \eqref{Jetadef} as the Legendre dual of $\L_\eta$).
\begin{lem}\label{Jetaprop}
Let $t_0 = t_0(\eta)$ and $\tcrit = \tcrit(\eta)$ be defined by 
\begin{equation}\label{t0def}
 t_0 = \lim_{\l \ra 0^-} \L_\eta'(\l) \quad\text{and}\quad \tcrit = \lim_{\l \ra \lcrit^-} \L_\eta'(\l).
\end{equation}
Then $J_\eta$ is finite, convex and continuous on $[1,\infty)$, decreasing on $[1,t_0]$ and non-decreasing on $[t_0,\infty)$. Moreover,
\begin{equation}\label{Jetadef2}
 J_\eta(t) =
\begin{cases}
 \sup_{\l \leq 0} (\l t - \L_\eta(\l)) &  \text{ if } t \in [1,t_0] \\
 \sup_{\l \geq 0} (\l t - \L_\eta(\l)) &   \text{ if } t \in [t_0,\tcrit] \\
 \lcrit t - \L_\eta(\lcrit) & \text{ if } t \geq \tcrit.
\end{cases}
\end{equation}
\end{lem}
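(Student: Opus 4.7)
The plan is to apply standard convex duality to the convex, lower semicontinuous function $\L_\eta$, which by Lemmas \ref{qlmgflim}, \ref{PhiUboundlem}, and \ref{derivform} is finite on $(-\infty,\lcrit]$, equal to $+\infty$ on $(\lcrit,\infty)$, and continuously differentiable on $(-\infty,\lcrit)$ with non-decreasing derivative. Before turning to $J_\eta$ itself I would record two elementary one-sided bounds on $\L_\eta$. First, $T_n \geq n$ gives $E_\w^\pi[e^{\l T_n}\ind{T_n<\infty}] \leq e^{\l n} P_\w^\pi(T_n<\infty)$ for $\l \leq 0$, so $\L_\eta(\l) \leq \l + \L_\eta(0)$ on $(-\infty,0]$. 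Second, iterating the ellipticity bound $P_\w^{(k,i)}(X_1=k+1) \geq \k$ from \eqref{onestepellipticity} gives $P_\w^\pi(T_n=n) \geq \k^n$, and restricting the expectation to $\{T_n=n\}$ yields $\L_\eta(\l) \geq \l + \log\k$ for every $\l \in \R$.

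These two bounds immediately give $t_0 \geq 1$: for $\l < 0$ the secant slope $(\L_\eta(0)-\L_\eta(\l))/(-\l)$ is at least $1$ by the first bound and at most $\L_\eta'(\l)$ by convexity, so $t_0 = \lim_{\l \to 0^-}\L_\eta'(\l) \geq 1$. Convexity of $J_\eta$ and lower semicontinuity are automatic, since $J_\eta$ is a supremum of affine functions. For $t<1$ the first bound gives $\l t - \L_\eta(\l) \geq \l(t-1) - \L_\eta(0) \to +\infty$ as $\l \to -\infty$, showing $J_\eta(t)=\infty$. For $t \geq 1$ I split the supremum at $\l=0$: on $(-\infty,0]$ the second bound gives $\l t - \L_\eta(\l) \leq \l(t-1) - \log\k \leq -\log\k$, and on the compact interval $[0,\lcrit]$ the continuous function $\l \mapsto \l t - \L_\eta(\l)$ is bounded, where continuity of $\L_\eta$ up to $\lcrit$ from the left follows from its convexity and its finiteness at $\lcrit$. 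Hence $J_\eta$ is finite on $[1,\infty)$, and being a finite convex function it is continuous on $(1,\infty)$; right-continuity at $t=1$ follows from the convex-combination bound $J_\eta(t) \leq \tfrac{t'-t}{t'-1} J_\eta(1) + \tfrac{t-1}{t'-1} J_\eta(t')$ for any fixed $t'>1$, combined with the already-noted lower semicontinuity.

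For the monotonicity and the three-case formula I would analyze the concave function $\l \mapsto \l t - \L_\eta(\l)$ via its derivative $t - \L_\eta'(\l)$ on $(-\infty,\lcrit)$. When $t \in [1,t_0]$ the derivative is $\leq 0$ on $[0,\lcrit)$, since $\L_\eta'(\l) \geq t_0 \geq t$ there, so the maximum of the concave function on $[0,\lcrit]$ is at $\l=0$ with value $-\L_\eta(0)$, which is also attained from the left; hence the full supremum equals $\sup_{\l \leq 0}(\l t - \L_\eta(\l))$, and since $\l t$ is non-increasing in $t$ for $\l \leq 0$, $J_\eta$ is non-increasing on $[1,t_0]$. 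When $t \in [t_0,t^*]$ the symmetric argument places the supremum on $[0,\lcrit]$, so $J_\eta(t) = \sup_{\l \geq 0}(\l t - \L_\eta(\l))$; and when $t \geq t^*$ the derivative $t - \L_\eta'(\l)$ stays nonnegative throughout $(-\infty,\lcrit)$, so the supremum is pushed to the boundary, giving $J_\eta(t) = \lcrit t - \L_\eta(\lcrit)$. On $[t_0,\infty)$ the supremum runs over $\l \geq 0$ and $\l t$ is non-decreasing in $t$ there, so $J_\eta$ is non-decreasing, completing the structural claims.

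The main obstacle is purely bookkeeping: correctly identifying which half-line of $\l$ contains the optimizer in each regime and checking that the three piecewise expressions agree at the transition points $t_0$ and $t^*$. This is routine once the convexity and $C^1$-smoothness of $\L_\eta$, together with the two elementary bounds above, are in hand.
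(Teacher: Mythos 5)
Your proposal is correct and follows essentially the same route as the paper: establish the two-sided elementary bounds $\l + \log\k \leq \L_\eta(\l) \leq \l$ (yours, $\leq \l + \L_\eta(0)$, is a slightly tighter variant) for $\l \leq 0$, and then read off the three-piece formula from the sign of $t - \L_\eta'(\l)$. Two small imprecisions are worth noting. First, in the argument that $t_0 \geq 1$, you write that the secant slope $(\L_\eta(0)-\L_\eta(\l))/(-\l)$ is \emph{at most} $\L_\eta'(\l)$ by convexity; the direction is reversed --- convexity gives $\L_\eta'(\l) \leq (\L_\eta(0)-\L_\eta(\l))/(-\l) \leq \L_\eta'(0^-) = t_0$, and it is the bound by $\L_\eta'(0^-)$ that closes the argument. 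Second, you assert that left-continuity of $\L_\eta$ at $\lcrit$ ``follows from its convexity and its finiteness at $\lcrit$,'' but convexity plus finiteness alone do not rule out an upward jump at the right endpoint of the effective domain (a convex function can be strictly larger at the endpoint than its left limit). Left-continuity at $\lcrit$ is genuinely needed for the third case $J_\eta(t) = \lcrit t - \L_\eta(\lcrit)$, and in the paper it is supplied by the continuity of $\l \mapsto \mu_0(\l)$ and $\l\mapsto\Phi_0(\l)$ on $(-\infty,\lcrit]$ (established in the proof of Lemma~\ref{derivform} together with the bounded convergence theorem, as flagged in the proof of Lemma~\ref{qlmgflim}); you should cite that rather than convexity. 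With these two fixes your argument matches the paper's, and in fact sidesteps the paper's explicit computation that $\lim_{\l\to-\infty}\L_\eta'(\l)=1$ by working directly with derivative signs rather than locating the optimizing $\l_t$.
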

\begin{rem}
 Note that $t_0 = \tcrit$ if $\lcrit = 0$ and that $t_0 = \L'_\eta(0) < \ts$ if $\lcrit > 0$. 
\end{rem}
\begin{proof}
The main thing that needs to be proved is that $\lim_{\l \ra -\infty} \L'_\eta(\l) = 1$. To this end, note that Assumption \ref{easm} implies that 
\[
 \k^n e^{\l n} \leq P_\w^\pi(X_n = n) e^{\l n} \leq E_\w^\pi[ e^{\l T_n} \ind{T_n < \infty} ] \leq e^{\l n}, \quad \forall \l \leq 0. 
\]
Thus, it follows that
\begin{equation}\label{Letalbub}
 \l + \log(\k) \leq \L_\eta(\l) \leq \l, \quad \forall \l \leq 0,
\end{equation}
and since $\L_\eta(\l)$ is convex and differentiable this implies that $\lim_{\l \ra -\infty} \L_\eta'(\l) = 1$. 
The conclusions of the Lemma then follow easily from the fact that $J_\eta(t)$ is the Legendre transform of $\L_\eta(\l)$. 
Indeed, since $\L_\eta$ is continuously differentiable, for any $t \in (1,\tcrit)$ there exists a $\l_t < \lcrit$ such that $\L_\eta'(\l_t) = t$. Note that this choice of $\l_t$ ensures that $J_\eta(t) = \l_t t - \L_\eta(\l_t)$. From this, it is straightforward to prove the stated properties of $J_\eta$. 
\end{proof}

The next Lemma shows that the parameter $t_0$ defined in \eqref{t0def} also has an important probabilistic meaning. 
\begin{lem}\label{lem:t0}
 If the random walk is recurrent or transient to the right, then for any initial distribution $\pi$ for the starting height of the random walk
\[
 \lim_{n\ra\infty} T_n/n = t_0, \quad \P^\pi\text{ - a.s.}
\]
\end{lem}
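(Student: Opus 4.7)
The LLN \eqref{llnTn} yields $T_n/n \to 1/\vp$ almost surely under $\P^\pi$ in the recurrent or right-transient case, so the statement is equivalent to the deterministic identity $t_0 = 1/\vp$, and I would prove the two matching inequalities using the quenched LDP of Theorem \ref{QLDPTn}.

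For $1/\vp \geq t_0$: a short Legendre computation using $\L_\eta'(0^-) = t_0$ shows $J_\eta(t) > 0$ for every $t \in [1, t_0)$ --- take $\l < 0$ small enough that the expansion $\l t - \L_\eta(\l) = \l(t-t_0) + o(\l) > 0$ kicks in, and pass to the supremum. Continuity of $J_\eta$ on $[1,\infty)$ (Lemma \ref{Jetaprop}) then gives $\inf_{[1,t_0-\e]} J_\eta > 0$ for any $\e \in (0, t_0-1)$, and the compact-set upper bound \eqref{eq:qldpub} forces $P_\w^\pi(T_n \leq n(t_0 - \e))$ to decay exponentially. Borel--Cantelli then gives $\liminf T_n/n \geq t_0$ almost surely, which combined with \eqref{llnTn} yields $1/\vp \geq t_0$.

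For $1/\vp \leq t_0$ when $\lcrit > 0$: the mirror Legendre argument with $\l > 0$ small yields $J_\eta(t) > 0$ on $(t_0, \tcrit]$, and on $[\tcrit,\infty)$ formula \eqref{Jetadef2} gives the linear bound $J_\eta(t) = \lcrit t - \L_\eta(\lcrit)$. Combining the compact-set LDP bound on $[t_0 + \e, M]$ with Chebyshev's inequality at $\l = \lcrit$ --- where $E_\w^\pi[e^{\lcrit T_n}]$ is controlled uniformly by the matrix-product representation together with the uniform upper bounds of Lemma \ref{PhiUboundlem} --- handles the noncompact tail $[M,\infty)$ for $M$ large enough, and Borel--Cantelli gives $\limsup T_n/n \leq t_0$ almost surely.

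The main obstacle is the degenerate case $\lcrit = 0$, in which \eqref{Jetadef2} forces $J_\eta \equiv 0$ on $[t_0,\infty)$ and the LDP alone produces no upper bound. I would bypass the LDP by sending $\l \to 0^-$ in the derivative formula from Lemma \ref{derivform} --- justified by the uniform bounds of Lemma \ref{PhiUboundlem} together with bounded convergence --- so as to represent
\[
t_0 = E_\eta\!\left[ \frac{\mu_0(0)\,\Phi_0'(0)\,\nu_1(0)}{\mu_0(0)\,\Phi_0(0)\,\nu_1(0)} \right],
\]
and then identify this $\eta$-expectation with $1/\vp$ via a Kac-type stationarity argument applied to the Markov chain of entrance heights $\{Y_{T_k}\}_{k \geq 0}$, whose quenched one-step transition matrix is $\Phi_0(0)$ and whose stationary distribution under $\eta$ coincides with $\mu_0(0)$.
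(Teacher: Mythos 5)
Your proposal and the paper's proof diverge substantially. The paper never uses the LDP of Theorem \ref{QLDPTn} here; instead it shows directly, in all cases, that $t_0 = \lim_{\l\ra 0^-}\L_\eta'(\l) = E_\eta[\mu_0(0)\Phi_0'(0)\nu_1(0)/(\mu_0(0)\Phi_0(0)\nu_1(0))]$, simplifies this to $E_\eta[E_\w^{\mu_0(0)}[T_1]]$ using that $\Phi_k(0)$ is stochastic (hence $\nu_k(0)=\tfrac1d\one$), and then cites the LLN formula $\lim T_n/n = E_\eta[E_\w^{\mu_0(0)}[T_1]]$ from \cite{rSCLT}. Your route, through the two LDP tails and Borel--Cantelli, is workable in outline when $\lcrit>0$ (and you are right that there is no circularity in using the upper bound \eqref{eq:qldpub}, whose proof is independent of this lemma), but notice that your $\lcrit=0$ fallback \emph{is} the paper's argument applied to all cases at once; once you are forced to develop that machinery anyway, the LDP detour buys nothing.

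Two concrete issues. First, your justification of $\lim_{\l\ra 0^-}\L_\eta'(\l)=E_\eta[\mu_0(0)\Phi_0'(0)\nu_1(0)/(\mu_0(0)\Phi_0(0)\nu_1(0))]$ via ``Lemma \ref{PhiUboundlem} together with bounded convergence'' does not hold up. The integrand is comparable (up to constants) to $\|\Phi_0'(\l)\|$, and the only uniform-in-$\w$ bound available is $d_\l$ from Lemma \ref{dphiUbound}, which by \eqref{secbound} behaves like $(\lcrit-\l)^{-1}$ and blows up as $\l\ra\lcrit^-=0^-$ exactly in the $\lcrit=0$ case you care about; moreover $\|\Phi_0'(0)\|$ itself need not be $\eta$-integrable (e.g.\ $\vp=0$). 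The paper handles this carefully by splitting: DCT with dominator $\|\Phi_0'(0)\|/c_{-1}$ when $E_\eta\|\Phi_0'(0)\|<\infty$, and MCT (both sides $=\infty$) otherwise. Second, the ``Kac-type stationarity argument'' you invoke is doing a lot of hidden work. In a fixed environment the chain $\{Y_{T_k}\}$ is time-inhomogeneous, so $\mu_0(0)$ is not a stationary distribution in the usual sense; what is true is that the environment process $(\th^n\w, Y_{T_n})$ has an invariant ergodic law built from $\eta$ and $\mu_0(0,\cdot)$, and the ergodic theorem for the additive functional $T_n=\sum\tau_k$ then gives the LLN formula. Establishing this is precisely the content of the LLN proofs in \cite{gSCLT,rSCLT}, which the paper cites rather than reproving, so your sketch here essentially defers to the same external result the paper uses.
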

\begin{rem}
 In light of the law of large numbers for hitting times in \eqref{llnTn} we can conclude that $t_0 = 1/\vp$, where $\vp$ is the limiting speed for the RWRE. 
\end{rem}

\begin{proof}
First, we claim that the formula for $\L_\eta'(\l)$ in Lemma \ref{derivform} implies that
\begin{equation}\label{t0form}
 t_0 = \lim_{\l \ra 0-} E_\eta\left[ \frac{\mu_0(\l) \Phi_0'(\l) \nu_1(\l) }{\mu_0(\l) \Phi_0(\l) \nu_1(\l) } \right]
= E_\eta\left[ \frac{\mu_0(0) \Phi_0'(0) \nu_1(0) }{\mu_0(0) \Phi_0(0) \nu_1(0) } \right].
\end{equation}
If $E_\eta[ \| \Phi_0'(0) \| ] < \infty$ then this follows from the dominated convergence theorem since
\[
 \frac{\mu_0(\l) \Phi_0'(\l) \nu_1(\l) }{\mu_0(\l) \Phi_0(\l) \nu_1(\l) } \leq \frac{\| \Phi_0'(\l) \|}{ c_\l } \leq \frac{\|\Phi_0'(0)\|}{c_{-1}}, \quad \forall \l \in [-1,0]. 
\]
On the other hand, it can be shown that the uniform bounds on the entries of $\Phi_k(\l)$ in \eqref{PhiUbound} imply that all of the entries of $\mu_0(\l)$ and $\nu_1(\l)$ are in $[c_\l^2/d, 1/(c_\l^2 d)]$, and so
\[
 \frac{\mu_0(\l) \Phi_0'(\l) \nu_1(\l) }{\mu_0(\l) \Phi_0(\l) \nu_1(\l) } \geq \frac{ (c_\l^4/d^2) \| \Phi_0'(\l) \| }{ \| \Phi_0(\l) \| } \geq \frac{ c_{-1}^4  \| \Phi_0'(\l) \|}{d^2}, \quad \forall \l \in [-1,0]. 
\]
Therefore, if $E_\eta[ \| \Phi'(0) \| ] = \infty$ then it follows from the monotone convergence theorem that both sides of \eqref{t0form} are infinite. 

Since we are assuming that the random walk is recurrent or transient to the right then the matrices $\Phi_k(0)$ are all stochastic, and thus $\nu_k(0) = \frac{1}{d} \one$ for all $k \in \Z$. Therefore, the formula for $t_0$ in \eqref{t0form} simplifies to 
\begin{equation}\label{eq:eqt0form2}
 t_0 = E_\eta\left[ \frac{\mu_0(0) \Phi_0'(0) \one }{\mu_0(0) \Phi_0(0) \one } \right]
= E_\eta\left[ \frac{\mu_0(0) \Phi_0'(0) \one }{\mu_0(0) \one } \right]
= E_\eta\left[ \mu_0(0) \Phi_0'(0) \one  \right] = E_\eta\left[ E_\w^{\mu_0(0)}[ T_1 ] \right].
\end{equation}
Finally, the proof of the law of large numbers for $T_n/n$ in \cite{rSCLT} gives a formula for the limit, and translating this formula into our notation we obtain
\[
 \lim_{n\ra\infty} \frac{T_n}{n} = E_\eta\left[ E_\w^{\mu_0(0)}[ T_1 ] \right], \quad \P^{\pi}\text{ - a.s.}
\]
\end{proof}

The following lemma characterizes the zero set of the rate function $J_\eta(t)$ and is consistent with the corresponding result for nearest-neighbor RWRE in \cite{cgzLDP}. 

\begin{lem}\label{qrfpropTn}
The quenched rate function for the hitting times $J_\eta$ has the following properties.
\begin{enumerate}
 \item If $\lim_{n\ra\infty} X_n = -\infty$, then $\inf_t J_\eta(t) > 0$. 
 \item If the RWRE is recurrent or transient to the right, then 
\begin{enumerate}
 \item If $\vp = 0$, then $J_\eta(t) > 0$ for all $t<\infty$ but $\inf_t J_\eta(t) = \lim_{t \ra\infty} J_\eta(t) = 0$. 
 \item If $\vp > 0$ and $\lcrit(\eta) = 0$ then $J_\eta(t) = 0 \iff t \geq t_0 = 1/\vp$. 
 \item If $\vp > 0$ and $\lcrit(\eta) > 0$, then $J_\eta(t) = 0 \iff t = t_0 = 1/\vp$. 
\end{enumerate}
\end{enumerate}
\end{lem}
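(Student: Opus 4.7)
The plan is to use the Legendre-duality formula $J_\eta(t) = \sup_\lambda\{\lambda t - \Lambda_\eta(\lambda)\}$ together with the regularity of $\Lambda_\eta$ established in Lemmas \ref{derivform} and \ref{Jetaprop}. The central quantity is
\[
\Lambda_\eta(0) = E_\eta\bigl[\log(\mu_0(0)\Phi_0(0)\mathbf{1})\bigr].
\]
Since $(\Phi_0(0)\mathbf{1})(i) = P_\w^{(0,i)}(T_1<\infty) \leq 1$, we always have $\Lambda_\eta(0) \leq 0$, and specializing the sup to $\lambda = 0$ yields the universal lower bound $J_\eta(t) \geq -\Lambda_\eta(0)$. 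The dichotomy ``$\Lambda_\eta(0) < 0$ versus $\Lambda_\eta(0) = 0$'' separates transience to the left from the recurrent or right-transient regimes, and drives the whole argument.

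For part (1), assume $X_n \to -\infty$. I claim $\Lambda_\eta(0) < 0$, which immediately gives $\inf_t J_\eta(t) \geq -\Lambda_\eta(0) > 0$. Suppose for contradiction that $\Lambda_\eta(0) = 0$. Because $\log(\mu_0(0)\Phi_0(0)\mathbf{1}) \leq 0$, this forces $\mu_0(0)\Phi_0(0)\mathbf{1} = 1$ $\eta$-a.s. The construction of $\mu_n(\lambda)$ in Corollary \ref{mudefcor} combined with the uniform lower bound on the entries of $\Phi_k(0)$ from Lemma \ref{PhiUboundlem} shows that each entry of $\mu_0(0)$ is bounded below by $c_0^2/d$ $\eta$-a.s. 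Hence $\mu_0(0)\Phi_0(0)\mathbf{1}=1$ forces $(\Phi_0(0)\mathbf{1})(i) = P_\w^{(0,i)}(T_1<\infty) = 1$ for \emph{every} $i \in [d]$, $\eta$-a.s. Shift invariance of $\eta$ then gives $P_\w^{(k,i)}(T_{k+1}<\infty) = 1$ for all $k\in\Z$ and $i\in[d]$ simultaneously ($\eta$-a.s.), and iterating via the strong Markov property at each $T_k$ produces $P_\w^{(0,i)}(T_n<\infty) = 1$ for every $n\geq 1$, contradicting $X_n \to -\infty$.

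For part (2), $\Lambda_\eta(0) = 0$, so $J_\eta \geq 0$ and evaluating the sup at $\lambda = 0$ gives $J_\eta(t_0) = 0$ whenever $t_0 < \infty$. Strict positivity of $J_\eta$ on $[1,t_0)$ is common to all sub-cases: for any $t < t_0$, continuity of $\Lambda_\eta'$ on $(-\infty,\lcrit)$ (Lemma \ref{derivform}) and $\Lambda_\eta'(0^-) = t_0 > t$ let us choose $\lambda < 0$ such that $\Lambda_\eta'(s) > t$ on $[\lambda,0]$, whence $\Lambda_\eta(\lambda) = -\int_\lambda^0 \Lambda_\eta'(s)\, ds < \lambda t$ and $J_\eta(t) \geq \lambda t - \Lambda_\eta(\lambda) > 0$. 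Case (a) $\vp = 0$ forces $t_0 = \infty$ and in turn $\lcrit = 0$ (otherwise differentiability of $\Lambda_\eta$ at $0$ would contradict $\Lambda_\eta'(0^-) = \infty$), so the previous argument gives $J_\eta > 0$ on $[1,\infty)$, while Legendre biduality $\Lambda_\eta(0) = \sup_t\{-J_\eta(t)\}$ yields $\inf_t J_\eta(t) = -\Lambda_\eta(0) = 0$. Case (b) $\lcrit = 0$, $\vp > 0$ has $\tcrit = t_0$, so the third branch of Lemma \ref{Jetaprop} gives $J_\eta(t) = \lcrit t - \Lambda_\eta(\lcrit) = 0$ for every $t \geq t_0$. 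Case (c) $\lcrit > 0$ additionally requires strict positivity on $(t_0,\infty)$: for $t_0 < t < \tcrit$ the symmetric argument with $\lambda > 0$ and $\Lambda_\eta'(0^+) = t_0 < t$ applies, and for $t \geq \tcrit$ (relevant only if $\tcrit < \infty$) the formula $J_\eta(t) = \lcrit t - \Lambda_\eta(\lcrit)$ is positive because $\Lambda_\eta(\lcrit) = \int_0^{\lcrit}\Lambda_\eta'(s)\, ds < \lcrit\tcrit \leq \lcrit t$ by $\Lambda_\eta'(0) = t_0 < \tcrit$ and continuity of $\Lambda_\eta'$.

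The main technical obstacle is the step $\Lambda_\eta(0) < 0$ in part (1); everything else reduces to elementary convex analysis applied to the structure of $\Lambda_\eta$ already laid out. The key ingredient there is the uniform positive lower bound on the entries of $\mu_0(0)$ extracted from Lemma \ref{PhiUboundlem}; without it, the equality $\mu_0(0)\Phi_0(0)\mathbf{1} = 1$ would only constrain those heights in the support of $\mu_0(0)$, and the contradiction with transience to the left would not go through.
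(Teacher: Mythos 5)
Your proof is correct and follows the same underlying strategy as the paper's: part (1) reduces to showing $\L_\eta(0) < 0$ (equivalently, that $\mu_0(0)\Phi_0(0)\mathbf{1} < 1$ on a set of positive $\eta$-measure) and part (2) reduces to elementary convex analysis on the Legendre dual. The main difference is presentational: where the paper directly asserts that transience to the left yields $P_\w^\pi(T_1<\infty) < 1$ for $\eta$-a.e.\ $\w$ and every $\pi$, you argue by contraposition, using the uniform positive lower bound on the entries of $\mu_0(0)$ to upgrade $\mu_0(0)\Phi_0(0)\mathbf{1}=1$ to $P_\w^{(0,i)}(T_1<\infty)=1$ for all $i$ and then iterating the strong Markov property to contradict $X_n\to-\infty$; this is a valid substitute and actually makes explicit the iteration step that the paper leaves to the reader. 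Your part (2) fills in the convex-analysis details (choice of $\l$ via continuity of $\L_\eta'$, the integral bound at $\lcrit$, the deduction $\vp=0\Rightarrow\lcrit=0$) that the paper dismisses with ``follows easily,'' and it is all correct.
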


\begin{proof}
 To prove the first part of the Lemma, note that $\inf_t J_\eta(t) = - \L_\eta(0)$ and so we need to show that $\L_\eta(0) < 0$ when the RWRE is transient to the left. To this end, note that if the RWRE is transient to the left then $P_\w^\pi(T_1 < \infty) < 1$ for $\eta$-a.e.\ environment $\w$ and any distribution $\pi$ on the starting height (here we are using Assumption \ref{easm}). Therefore, 
\[
 \L_\eta(0) = E_\eta[ \log( \mu_0(0) \Phi_0(0) \one ) ] = E_\eta[ \log P_\w^{\mu_0(0)}( T_1 < \infty) ] < 0. 
\]

The second part of the Lemma follows easily from the fact that $J_\eta(t)$ is the Legendre transform of the differentiable function $\L_\eta(\l)$, the fact that $t_0 = 1/\vp$,
and the definition of $t_0$ in \eqref{t0def}. 
\end{proof}

\subsection{Upper bound}

Since we are only proving a weak large deviation principle, the properties of $J_\eta$ in Lemma \ref{Jetaprop} imply that to prove the quenched large deviation upper bound it will be enough to show that
\begin{equation}\label{qubr}
 \limsup_{n\ra\infty} \frac{1}{n} \log P_\w^\pi( T_n \in [nt,\infty) ) \leq - \sup_{\l \geq 0} ( \l t - \L_\eta(\l) ),  \quad \forall t \in [t_0,\infty), \quad \eta\text{ - a.s.},
\end{equation}
and
\begin{equation}\label{qubl}
 \limsup_{n\ra\infty} \frac{1}{n} \log P_\w^\pi( T_n \leq nt ) \leq - \sup_{\l \leq 0} (\l t - \L_\eta(\l)) , \quad \forall t \in [1,t_0],  \quad \eta\text{ - a.s.}
\end{equation}
To show \eqref{qubr}, Chebychev's inequality implies that for any $\l\geq 0$,
\[
 P_\w^\pi( T_n  \geq [nt,\infty) ) \leq e^{-\l n t} E_\w^\pi\left[ e^{\l T_n} \ind{T_n <\infty} \right].
\]
Then, applying Lemma \ref{qlmgflim} and then optimizing over $\l\geq 0$ proves \eqref{qubr}. The proof of \eqref{qubl} is similar and therefore ommitted.

\subsection{Lower bound}


For the proof of the quenched large deviations lower bound we will need the following Lemma. 

\begin{lem}\label{ltMlem}
If $t > 1$,
then for all $M > t+2$ there exists a $\l_{t,M}$ such that $\L_{\eta,M}'(\l_{t,M}) = t$.
\end{lem}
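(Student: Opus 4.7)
The plan is to apply the intermediate value theorem to $\L_{\eta,M}'$. By Lemma~\ref{tqlmgflim}, $\L_{\eta,M}$ is convex and continuously differentiable on all of $\R$, so $\L_{\eta,M}'$ is continuous and non-decreasing; it therefore suffices to show
\[
 \lim_{\l\to-\infty}\L_{\eta,M}'(\l) \le 1 < t < \lim_{\l\to+\infty}\L_{\eta,M}'(\l),
\]
after which the IVT produces the required $\l_{t,M}$.

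For the lower limit I would adapt the two-sided bound used in the proof of Lemma~\ref{Jetaprop}: $T_n\ge n$ deterministically gives $\L_{\eta,M}(\l)\le\l$ for $\l\le 0$, while the straight-right path (which automatically has every $\tau_k=1\le M$) has quenched probability at least $\k^n$ by \eqref{onestepellipticity}, giving $\L_{\eta,M}(\l)\ge\l+\log\k$. Hence $\L_{\eta,M}(\l)-\l$ is a bounded convex function on $(-\infty,0]$, and any such function has derivative tending to $0$ at $-\infty$; so $\L_{\eta,M}'(\l)\to 1$.

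For the upper limit I would manufacture explicit slow paths that still respect the truncation. Let $L$ be the largest odd integer with $L\le M$, so $L\in\{M-1,M\}$ and $L\ge M-1$. Starting from level $k-1$, consider the deterministic direction sequence that alternates left and right for $L-1$ steps (beginning with left) and then steps right: during the first $L-1$ steps the walker shuttles between levels $k-2$ and $k-1$, and the $L$-th step carries it into level $k$ for the first time, so this event forces $\tau_k=L\le M$. Only the directions are prescribed, so \eqref{onestepellipticity} applied step by step gives $P_\w^{(k-1,i)}(\tau_k=L)\ge \k^L$ uniformly in $\w\in\Omega_\k$ and $i\in[d]$. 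Setting $\tilde\Phi_k(\l)(i,j):=E_\w^{(k,i)}[e^{\l\tau_{k+1}}\ind{\tau_{k+1}=L,\,Y_{T_{k+1}}=j}]$ (which satisfies $\tilde\Phi_k(\l)\le\Phi_{k,M}(\l)$ componentwise and $\tilde\Phi_k(\l)\one\ge e^{\l L}\k^L\one$ componentwise), iterating the matrix inequality gives
\[
 E_\w^\pi\!\left[e^{\l T_n}\ind{\tau_k\le M,\,k=1,\ldots,n}\right]\ge \pi\,\tilde\Phi_0(\l)\cdots\tilde\Phi_{n-1}(\l)\one\ge e^{n\l L}\k^{nL}.
\]
Taking $\tfrac{1}{n}\log$ and invoking Lemma~\ref{tqlmgflim} yields $\L_{\eta,M}(\l)\ge\l L+L\log\k$ for $\l\ge 0$, so $\L_{\eta,M}'(+\infty)\ge L\ge M-1$. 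Since $M>t+2$ implies $M-1>t$, the IVT closes the argument.

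The one subtle point is the parity constraint: since \eqref{onestepellipticity} does not lower-bound the in-level transitions, a deterministic direction sequence can only realise odd values of $\tau_k$, so rounding $M$ down to the nearest odd integer may cost one unit. This is precisely why the hypothesis reads $M>t+2$ rather than the slightly weaker $M>t+1$.
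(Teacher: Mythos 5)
Your proof is correct and follows essentially the same strategy as the paper's. Both arguments reduce to producing a linear lower bound $\L_{\eta,M}(\l)\ge \l+\log\k$ for $\l\le 0$ (straight right path) and a steep linear lower bound $\L_{\eta,M}(\l)\ge \l\cdot(\text{slope})+\text{const}$ for $\l\ge 0$ via a slow path of length close to $M$; the paper phrases the conclusion as showing $\l t-\L_{\eta,M}(\l)\to-\infty$ at both ends (so the supremum is attained at an interior critical point), while you phrase it via the IVT applied to $\L_{\eta,M}'$ — these are equivalent given convexity and continuous differentiability. The only cosmetic difference is in the slow-path bound: you carefully build a path of exact odd length $L\in\{M-1,M\}$ and get slope $\ge M-1$, whereas the paper more loosely uses $P_\w^\pi(T_1\in[M-2,M])\ge\k^M$ to get slope $\ge M-2$; both comfortably beat $t$ under the hypothesis $M>t+2$.
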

\begin{proof}
Since $\L_{\eta,M}(\l)$ is convex and continuously differentiable, it is enough to show that 
\begin{equation}\label{compact}
 \lim_{\l \ra -\infty} \l t - \L_{\eta,M}(\l) = -\infty \quad \text{and} \quad \lim_{\l \ra \infty} \l t - \L_{\eta,M}(\l) = - \infty, \quad \forall t \in (1,M-2). 
\end{equation}
As in \eqref{Letalbub}, Assumption \ref{easm} implies that $\L_{\eta,M}(\l) \geq \l + \log \k$ for all $\l \leq 0$.
This is enough to prove the first limit in \eqref{compact} for $t>1$. To prove the second limit in \eqref{compact}, for any $\l \geq 0$ and any distribution $\pi$ on $[d]$ note that
\[
 E_\w^\pi[ e^{\l T_1} \ind{T_1 \leq M} ] \geq e^{\l (M-2)} P_\w^\pi( T_1 \in[M-2,M] ) \geq e^{\l (M-2)} \k^M, 
\]
where the last inequality follows from Assumption \ref{easm}. 
This implies that $\L_{\eta,M}(\l) \geq \l(M-2) + M \log \k$ for all $\l \geq 0$ which is enough to prove the second limit in \eqref{compact} when $t < M-2$.  
\end{proof}

For the large deviations lower bound, it will be enough to show that
\begin{equation}\label{qlbdlim}
 \lim_{\d\ra 0} \liminf_{n\ra\infty} P_\w^\pi ( | T_n - nt | < n\d ) \geq - J_\eta(t), \quad \forall t > 1, \quad \eta \text{ - a.s.}
\end{equation}
We will follow a change of measure argument that is a minor modification of the one in \cite[pp. 76-78]{cgzLDP}. 
Fix $M > \max\{ N_\k, t+2 \}$ and $\l \in \R$, and define the probability measure $Q_{\w,n}^{\l,M}$ on paths up to time $T_n$ with $\tau_k \leq M$ for all $k\leq n$ by
\begin{equation}\label{QwnlMdef}
 \frac{d Q_{\w,n}^{\l,M} }{d P_\w^\pi} = \frac{1}{Z_{n,\w,\l,M}} e^{\l T_n } \ind{\tau_k \leq M, \, k=1,2,\ldots n}, 
\quad \text{where } 
Z_{n,\w,\l,M} = E_\w^\pi\left[  e^{\l T_n } \ind{\tau_k \leq M, \, k=1,2,\ldots n} \right]. 
\end{equation}
Then,
\begin{align}
 P_\w^\pi( |T_n - nt| < n\d ) &\geq P_\w^\pi ( |T_n - nt | < n \d , \, \tau_k \leq M, \, k=1,2,\ldots, n ) \nonumber \\
&= Z_{n,\w,\l,M} E_{Q_{\w,n}^{\l,M}}\left[ e^{ - \l T_n} \ind{ |T_n - nt| < n\d } \right] \nonumber \\
&\geq  Z_{n,\w,\l,M} e^{- \l (nt \pm \d n)} Q_{\w,n}^{\l,M}( |T_n - nt| < n\d ),
\end{align}
where the $\pm$ sign in the last line depends on whether or not $\l \geq 0$.
Then, since Lemma \ref{tqlmgflim} implies that $\lim_{n\ra\infty} n^{-1} \log Z_{n,\w,\l,M} = \L_{\eta,M}(\l)$ we conclude that
\begin{equation} \label{eq:tcom}
 \liminf_{n\ra\infty} \frac{1}{n} \log P_\w^\pi( |T_n - nt| < n\d )
\geq - \l (t \pm \d) + \L_{\eta,M}(\l) + \liminf_{n\ra\infty} \frac{1}{n} \log Q_{\w,n}^{\l,M}( |T_n - nt| < n\d ). 
\end{equation}
Now, let $\l_{t,M}$ be chosen as in Lemma \ref{ltMlem} so that $\L_{\eta,M}'(\l_{t,M}) = t$. 
We claim that this choice of $\l_{t,M}$ implies that
\begin{equation}\label{eq:QwnlMLLN}
  \lim_{n\ra\infty} Q_{\w,n}^{\l_{t,M},M}( |T_n - nt| < n\d ) = 1, \quad \forall \d>0.
\end{equation}
To see this, note that for any $h>0$ Chebychev's inequality and the definition of $Q_{\w,n}^{\l,M}$ imply that
\begin{align*}
 Q_{\w,n}^{\l_{t,M},M}( T_n >  n(t+\d) )
&= e^{- h n(t+\d)} \frac{1}{Z_{n,\w,\l_{t,M},M}} E_{\w}^\pi \left[  e^{(\l_{t,M} + h) T_n } \ind{ \tau_k \leq M, \, k=1,2,\ldots n} \right].
\end{align*}
Then, Lemma \ref{tqlmgflim} implies that
\[
 \limsup_{n\ra\infty} \frac{1}{n} \log Q_{\w,n}^{\l_{t,M}}( T_n \geq  n(t+\d) ) \leq - h (t+\d)- \L_{\eta,M}(\l_{t,M}) + \L_{\eta,M}(\l_{t,M}+h).
\]
Since $\L_{\eta,M}'(\l_{t,M}) = t$, then for $h>0$ small enough (depending on $\d$) the right side above is strictly negative and so $Q_{\w,n}^{\l_{t,M}}( T_n \geq  n(t+\d) ) $ decays exponentially fast in $n$.
A similar argument shows that that $Q_{\w,n}^{\l_{t,M}}( T_n \leq  n(t-\d) )$ also decays exponentially fast in $n$ and thus \eqref{eq:QwnlMLLN} holds.

If we define $J_{\eta,M}(t) = \sup_\l (\l t - \L_{\eta,M}(\l) )$ to be the Legendre dual of $\L_{\eta,M}$, then the choice of $\l_{t,M}$ implies that $J_{\eta,M}(t) = \l_{t,M} t - \L_{\eta,M}(\l_{t,m})$. 
Therefore, \eqref{eq:tcom} and \eqref{eq:QwnlMLLN} imply that 
\[
 \lim_{\d \ra 0} \liminf_{n\ra\infty} \frac{1}{n} \log P_\w^\pi( |T_n - nt| < n\d ) \geq - \lim_{\d \ra 0} (J_{\eta,M}(t) \pm \d \l_{t,M} ) = - J_{\eta,M}(t).
\]
$\L_{\eta,M}(\l)$ is non-decreasing in $M$, and therefore $J_{\eta,M}(t)$ is non-increasing in $M$.
Thus, in order to finish 
proof of \eqref{qlbdlim}
we need to prove that
\begin{equation}\label{JMlim}
 \lim_{M\ra\infty} J_{\eta,M}(t) = J_\eta(t).
\end{equation}
Since $J_{\eta,M}(t)$ is non-increasing in $M$, we can define $J_{\eta,\infty}(t) := \lim_{M\ra\infty} J_{\eta,M}(t) \geq J_\eta(t)$.
Note that it follows from Lemma \ref{ltMlem} that $J_{\eta,\infty}(t) < \infty$ for any $t> 1$.
Then, for any $t > 1$ and $M<\infty$ define $K_{M,t} := \{ \l: \, \l t - \L_{\eta,M}(\l) \geq J_{\eta,\infty}(t) \}$.
Since $\L_{\eta,M}(\l)$ is non-decreasing in $M$, it follows that the sets $K_{M,t}$ are nested and decreasing. Also, \eqref{compact} implies that $K_{M,t}$ is compact for all large $M$. Therefore, we can conclude that there exists a $\l_{t,\infty} \in \bigcap_M K_{M,t}$, and thus
\[
 J_{\eta,\infty}(t) \leq \lim_{M\ra\infty} \l_{t,\infty} t - \L_{\eta,M}(\l_{t,\infty}) = \l_{t,\infty} t - \L_{\eta}(\l_{t,\infty}) \leq J_\eta(t).
\]
Since we showed previously that $J_{\eta,\infty}(t) \geq J_\eta(t)$, this completes the proof of \eqref{JMlim} and thus also the proof of the large deviations lower bound.

\section{Proof of the averaged LDP for hitting times}\label{sec:aldpTn}

The main goal of this section is to prove the averaged large deviation principle for the hitting times as stated in Theorem \ref{th:aldpTn}. However, before giving the proof of Theorem \ref{th:aldpTn} we must first study some properties of the rate function $\mathbb{J}_\eta(t)$. 

\subsection{Properties of the averaged rate function for hitting times}

Recall that the averaged rate function for hitting times is defined by the variational formula in \eqref{eq:arfTn} involving the specific relative entropy function $h(\cdot|\eta)$. 
It is known that $h(\a|\eta) < \infty$ only if $\a \in M_1^s(\Omega_\k)$, but it will be useful below to show that there is an even smaller subset of $M_1^s(\Omega_\k)$ where the specific relative entropy is finite. 
To this end, let $\mathcal{M}_\eta$ denote the set of stationary measures $\a$ with $\supp \a \subset \Sigma_\eta^\Z$ (recall the definition of $\Sigma_\eta$ from Lemma \ref{PhiUboundlem}). 
\begin{lem}\label{lem:hfinite}
 If $\eta$ is locally equivalent to the product of its marginals, then $h(\a|\eta)<\infty$ implies that $\a \in \mathcal{M}_\eta = \{ \a \in M_1^s(\Omega_\k): \, \supp \a \subset \Sigma_\eta^\Z\}$. 
\end{lem}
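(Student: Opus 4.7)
The plan is to exploit the standard fact that finiteness of specific relative entropy forces local absolute continuity. Recall that $h(\a|\eta)$ is a normalized limit of ordinary relative entropies of the form $H(\a|_{[0,n-1]} \mid \eta|_{[0,n-1]})$, and moreover $h(\a|\eta)=\infty$ as soon as $\a$ fails to be stationary or as soon as some finite-dimensional marginal of $\a$ fails to be absolutely continuous with respect to the corresponding marginal of $\eta$. Consequently, if $h(\a|\eta) < \infty$, then $\a$ is stationary and $\a|_{[0,n-1]} \ll \eta|_{[0,n-1]}$ for every $n \geq 1$.

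Next I would chain this with local equivalence. The assumption that $\eta$ is locally equivalent to the product of its marginals gives $\eta|_{[0,n-1]} \ll \eta_0^n$, so by transitivity $\a|_{[0,n-1]} \ll \eta_0^n$ for every $n$. Since $\Sigma_\eta$ is by definition the (closed) support of $\eta_0$, we have $\eta_0^n(\Sigma_\eta^n) = 1$, and therefore $\a|_{[0,n-1]}(\Sigma_\eta^n)=1$, i.e.\
\[
 \a\bigl(\{\w:\w_k \in \Sigma_\eta \text{ for all } k \in [0,n-1]\}\bigr) = 1, \quad \forall n \geq 1.
\]

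To conclude that $\supp \a \subset \Sigma_\eta^\Z$, I would use stationarity to translate the above identity to any finite interval $[-n,n]$ and then intersect over $n$:
\[
 \a(\Sigma_\eta^\Z) = \a\!\left( \bigcap_{n\geq 1} \{\w:\w_k \in \Sigma_\eta,\, \forall k\in[-n,n]\} \right) = 1.
\]
Since $\Sigma_\eta$ is closed, $\Sigma_\eta^\Z$ is closed in the product topology, and therefore $\supp \a \subset \Sigma_\eta^\Z$. Finally, because Assumption \ref{easm} places $\eta$ on $\Omega_\k$, the marginal $\eta_0$ is supported on $\Sigma_\k$, so $\Sigma_\eta \subset \Sigma_\k$ and hence $\Sigma_\eta^\Z \subset \Omega_\k$; combined with stationarity of $\a$ this gives $\a \in \mathcal{M}_\eta$.

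I do not anticipate a real obstacle here: the argument is essentially a bookkeeping exercise built on (i) the local absolute continuity forced by $h(\a|\eta)<\infty$, (ii) the local equivalence assumption on $\eta$, and (iii) the definition of support. The only mildly delicate point is the translation from $\a|_{[0,n-1]}(\Sigma_\eta^n)=1$ to $\a(\Sigma_\eta^\Z)=1$, but stationarity and countable intersection handle this immediately.
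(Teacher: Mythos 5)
Your proof is correct and takes essentially the same route as the paper, just phrased as the forward implication rather than the contrapositive: both arguments hinge on the fact that finiteness of $h(\a|\eta)$ forces $\a|_{\mathcal{G}_n}\ll\eta|_{\mathcal{G}_n}$ for all $n$, and then local equivalence together with the definition of $\Sigma_\eta$ forces $\a$ to concentrate on $\Sigma_\eta^\Z$. The extra bookkeeping you do (chaining through $\eta_0^n$ and the countable intersection argument) is a correct elaboration of the same idea.
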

\begin{proof}
 Recall that the specific relative entropy is defined by $h(\a|\eta) = \sup_{n} \frac{1}{n} H(\a|\eta)\bigr|_{\mathcal{G}_n} $, where $\mathcal{G}_n = \s(\w_x, \, x=1,2,\ldots n)$ 
and $H$ is the general relative entropy function defined by 
\[
 H(\s|\pi) = 
\begin{cases}
 \int f \log f \, d\pi & \text{if } f= \frac{d\s}{d\pi} \text{ exists}\\
 \infty & \text{otherwise}. 
\end{cases}
\]
If $\a \notin \mathcal{M}_\eta$ then it is clear that $\a( (\w_1,\w_2,\ldots, \w_n) \in \Sigma_\eta^n ) < 1$ for some $n<\infty$, and since $\eta$ is locally equivalent to the product of it's marginals this implies that $\a$ is not absolutely continuous with respect to $\eta$ when restricted to $\mathcal{G}_n$. Thus $h(\a|\eta) \geq H(\a|\eta) \bigr|_{\mathcal{G}_n} = \infty$. 
\end{proof}

We will also need the following lemma which extends the definition of $\L_\eta(\l)$ in Lemma \ref{qlmgflim} from ergodic to stationary measures. 
\begin{lem}\label{Laldef}
 Let $\a \in M_1^s(\Omega_\k)$, then we can define 
\[
 \L_\a(\l) = 
\begin{cases}
 E_\a\left[ \log( \mu_0(\l) \Phi_0(\l) \one) \right] & \text{if } \l \leq \lcrit(\a)\\ 
\infty & \text{otherwise},
\end{cases}
\]
where $\lcrit(\a) = \sup\{ \l : \, \a( \|\Phi_0(\l) \| < \infty ) = 1 \}$. Moreover, if $\a \in \mathcal{M}_\eta$ then $\lcrit(\a) \geq \lcrit(\eta)$, and for any initial distribution $\pi$ (even depending on the environment $\w$)
\begin{equation}\label{qlmgfL1lim}
 \lim_{n\ra\infty} E_\a\left[ \frac{1}{n} \log E_\w^\pi\left[ e^{\l T_n} \ind{T_n<\infty} \right] \right] = \L_\a(\l). 
\end{equation}
\end{lem}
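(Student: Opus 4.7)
The plan is to reduce the statements to the ergodic case already handled in Lemmas \ref{PhiUboundlem}, \ref{mudeflem}, and \ref{qlmgflim} via the ergodic decomposition, and then upgrade the resulting almost-sure convergence to an $L^1$ limit using the deterministic bounds on $\supp \a$ supplied by \eqref{eq:PhiUboundU}.

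First I would verify that $\L_\a(\l)$ is well defined. Writing $\a = \int \a_\w \, d\a(\w)$ for the ergodic decomposition, where each $\a_\w$ is $\theta$-ergodic and concentrated on $\Omega_\k$, note that for $\l \leq \lcrit(\a)$ the event $\{\|\Phi_0(\l)\| < \infty\}$ has full $\a$-measure, hence full $\a_\w$-measure for $\a$-a.e.\ $\w$. In particular $\lcrit(\a_\w) \geq \lcrit(\a)$ for $\a$-a.e.\ $\w$, so Lemma \ref{mudeflem} applies componentwise and the vectors $\mu_0(\l,\w)$, defined through the pointwise limit $\lim_{m\ra-\infty} e_i \Phi_{[m,-1]}(\l)/(e_i\Phi_{[m,-1]}(\l)\one)$, exist $\a$-a.s.\ and depend only on $\w$ (not on the chosen ergodic component). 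The integrand $\log(\mu_0(\l)\Phi_0(\l)\one)$ is therefore an $\a$-a.s.\ well-defined function of $\w$, so $E_\a[\cdot]$ is unambiguous. The bound $\lcrit(\a) \geq \lcrit(\eta)$ for $\a \in \mathcal{M}_\eta$ is an immediate consequence of \eqref{eq:PhiUboundU}: since $\supp \a \sseq \Sigma_\eta^\Z$, every $\w \in \supp \a$ satisfies $\Phi_0(\l,\w)(i,j) \leq 1/c_\l$ for every $\l \leq \lcrit(\eta)$, and thus $\a(\|\Phi_0(\l)\| < \infty) = 1$ for such $\l$.

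For the $L^1$ limit \eqref{qlmgfL1lim} when $\a \in \mathcal{M}_\eta$ and $\l \leq \lcrit(\a)$, applying Lemma \ref{qlmgflim} on each ergodic component $\a_\w$ (which satisfies Assumptions \ref{asmerg} and \ref{easm}) yields, for $\a$-a.e.\ $\w$,
\[
\frac{1}{n}\log E_\w^\pi\bigl[e^{\l T_n}\ind{T_n<\infty}\bigr] \;\longrightarrow\; E_{\a_\w}\bigl[\log(\mu_0(\l)\Phi_0(\l)\one)\bigr], \quad \a_\w\text{-a.s.}
\]
Integrating out the ergodic component gives $\a$-a.s.\ convergence of the left-hand side. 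The deterministic bound \eqref{eq:PhiUboundU} furnishes $c_\l^n \leq \pi \Phi_{[0,n-1]}(\l)\one \leq (d/c_\l)^n$ for every $\w \in \supp \a$, so $\bigl|\tfrac{1}{n}\log E_\w^\pi[e^{\l T_n}\ind{T_n<\infty}]\bigr|$ is bounded uniformly in $n$ and in $\w \in \supp \a$. The bounded convergence theorem then delivers the $L^1$ statement, and Fubini together with the ergodic decomposition identifies the limit as $\int E_{\a_\w}[\log(\mu_0(\l)\Phi_0(\l)\one)]\,d\a(\w) = E_\a[\log(\mu_0(\l)\Phi_0(\l)\one)] = \L_\a(\l)$. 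For $\l > \lcrit(\a)$, the ergodic decomposition combined with Lemma \ref{lcritlem} shows that on a set of positive $\a$-measure all entries of $\Phi_0(\l,\w)$ are infinite, so $E_\w^\pi[e^{\l T_n}\ind{T_n<\infty}] = \infty$ there for all large $n$, making the expectation in \eqref{qlmgfL1lim} equal to $+\infty = \L_\a(\l)$.

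The main technical obstacle is the bookkeeping of the ergodic decomposition: one must verify that the version of $\mu_0(\l,\w)$ built componentwise via Lemma \ref{mudeflem} agrees with a single $\w$-measurable function, so that $\L_{\a_\w}(\l) = E_{\a_\w}[\log(\mu_0(\l)\Phi_0(\l)\one)]$ is an integral of the same $\w$-function that appears in the definition of $\L_\a$. Once this identification is made, both the componentwise application of Lemma \ref{qlmgflim} and the final Fubini step proceed without incident, and the hypothesis $\a \in \mathcal{M}_\eta$ is used precisely to guarantee the uniform upper and lower bounds on $\Phi_0(\l,\w)$ required for the bounded convergence argument.
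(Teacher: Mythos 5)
Your proof is correct, but it routes the $L^1$ limit \eqref{qlmgfL1lim} through the ergodic decomposition in a way the paper avoids, and at the cost of losing the rate of convergence. The paper's argument is shorter: once the uniform entrywise bounds on $\Phi_0(\l)$ hold $\a$-a.s.\ for $\l\leq\lcrit(\a)$ (which the paper deduces directly from the observation that the argument of Lemma \ref{PhiUboundlem} only uses $\w\in\Omega_\k$ and $\|\Phi_k(\l)\|<\infty$, thereby avoiding your appeal to \eqref{eq:PhiUboundU} and the local-equivalence hypothesis it requires), the deterministic $O(1/n)$ error bound \eqref{eq:qlmgfUbound} also holds $\a$-a.s.; taking $E_\a$-expectations and using stationarity of $\a$ to replace each $E_\a[\log(\mu_k(\l)\Phi_k(\l)\one)]$ with $\L_\a(\l)$ gives \eqref{eq:qlmgfUL1bound} directly. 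Your route — applying Lemma \ref{qlmgflim} on each ergodic component, integrating out, and invoking bounded convergence and Fubini — establishes the same limit, but the explicit rate $2/((1-c_\l^4)c_\l^{10}n)$ in \eqref{eq:qlmgfUL1bound} is reused verbatim in the proof of Lemma \ref{lem:acont}, so there is a concrete downstream reason to prefer the direct argument. Also, the bookkeeping concern you raise at the end is not a genuine obstacle: Lemma \ref{mudeflem} defines $\mu_0(\l,\w)$ directly as a limit of matrix products built from the bi-infinite environment, with no reference to the ergodic component, so the integrand in $\L_\a(\l)$ is unambiguously a single $\w$-measurable function from the outset.
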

\begin{proof}
 Since $\a$ is stationary, it is a convex combination of ergodic measures on $\Omega_\k$ \cite[Theorem 6.6]{vPT}.
Then since $\L_\eta(\l)$ is well defined for each $\eta \in M_1^e(\Omega_\k)$ it is clear that the definition of $\L_\a(\l)$ above makes sense since the vectors $\mu_0(\l)$ are defined $\a$-a.s.\ when $\l \leq \lcrit(\a)$. Also, since the uniform bounds on $\Phi_0(\l)$ only depend on the fact that $\w \in \Omega_\k$ and $\|\Phi_k(\l)\|<\infty$ for all $k$, then it follows that $c_\l \leq \Phi_0(\l)(i,j) \leq 1/c_\l$ for all $\l \leq \lcrit(\a)$, $\a$-a.s. 
Following the proof of Lemma \ref{qlmgflim}, we see that these uniform bounds imply that \eqref{eq:qlmgfUbound} still holds $\a$-a.s.
In particular, taking expectations of \eqref{eq:qlmgfUbound} gives 
\begin{equation}\label{eq:qlmgfUL1bound}
\left| \frac{1}{n} E_\a\left[ \log E_\w^\pi[ e^{\l T_n} \ind{T_n < \infty} ]  \right] - \L_\a(\l) \right|
\leq \frac{2}{(1-c_\l^4) c_\l^{10} n},
\end{equation}
from which \eqref{qlmgfL1lim} follows easily. 
\end{proof}

\begin{lem}\label{lem:acont}
  If $\eta$ satisfies Assumptions \ref{asmplldp} and \ref{asmlocal}, then
the map 
\[
 (\l,\a) \mapsto E_\a[ \log( \mu_0(\l) \Phi_0(\l) \one ) ] =  \L_\a(\l)
\]
is jointly continuous on $(-\infty,\lcrit(\eta)) \times \mathcal{M}_\eta$ and lower semicontinuous on $(-\infty,\lcrit(\eta)] \times \mathcal{M}_\eta$,
where $\mathcal{M}_\eta \subset M_1(\Omega_\k)$
is equipped with the induced topology of weak convergence of probability measures.
\end{lem}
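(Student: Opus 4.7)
The plan is to approximate $\L_\a(\l)$ by the truncated versions $\L_{\a, M}(\l) := E_\a[\log(\mu_{0,M}(\l) \Phi_{0,M}(\l) \one)]$ from Lemma~\ref{tqlmgflim}, which for fixed $M \geq N_\k$ I expect to be jointly continuous on all of $\R \times \mathcal{M}_\eta$. First I would check that for each compact $K \subset \R$ the integrand $(\l, \w) \mapsto \log(\mu_{0,M}(\l,\w) \Phi_{0,M}(\l,\w) \one)$ is bounded and jointly continuous on $K \times \Sigma_\eta^\Z$ (note $\Sigma_\eta^\Z$ is compact in the product topology). Joint continuity of $\Phi_{0,M}$ is clear since it depends on only finitely many coordinates of $\w$ through a finite sum over paths. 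For $\mu_{0,M}(\l)$, Lemma~\ref{muMdeflem} presents it as a uniform limit (as $m \to -\infty$) of the normalized partial products $e_i \Phi_{[m,-1],M}(\l)/(e_i \Phi_{[m,-1],M}(\l) \one)$, each of which depends continuously on $(\l, \w)$ through finitely many coordinates; the error bound \eqref{muMerror} together with continuity of $c_\l$ in $\l$ gives uniform convergence on $K \times \Sigma_\eta^\Z$, so the limit is jointly continuous. Uniform boundedness is given by Remark~\ref{rem:PhikMUbounds} for $\l \leq \lcrit(\eta)$ and by the elementary bounds $(\k/2) e^\l \leq \Phi_{0,M}(\l)(i,j) \leq e^{\l M}$ (from the proof of Lemma~\ref{muMdeflem}) for larger $\l$. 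The definition of weak convergence, combined with uniform continuity of the integrand in $\l$ (from compactness of $\Sigma_\eta^\Z$), then yields joint continuity of $\L_{\a, M}$ on $\R \times \mathcal{M}_\eta$.

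Next I would show that $\L_{\a,M}(\l) \nearrow \L_\a(\l)$ monotonically as $M \to \infty$ for every $\a \in \mathcal{M}_\eta$ and every $\l \leq \lcrit(\eta)$. For ergodic $\a$ this follows from Lemmas~\ref{tqlmgflim} and \ref{Laldef} combined with the monotonicity $E_\w^\pi[e^{\l T_n} \ind{\tau_k \leq M,\, k=1,\ldots,n}] \nearrow E_\w^\pi[e^{\l T_n} \ind{T_n < \infty}]$. For stationary but non-ergodic $\a \in \mathcal{M}_\eta$ I would decompose $\a$ into its ergodic components (which remain supported on $\Sigma_\eta^\Z$ and hence lie in $\mathcal{M}_\eta$) and apply Fubini together with dominated convergence, using the uniform bounds from Lemma~\ref{PhiUboundlem}. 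Lower semicontinuity on $(-\infty, \lcrit(\eta)] \times \mathcal{M}_\eta$ is then immediate since $\L_\a = \sup_M \L_{\a,M}$ is a pointwise supremum of jointly continuous functions.

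To upgrade to joint continuity on the open set $(-\infty, \lcrit(\eta)) \times \mathcal{M}_\eta$, I would show that this approximation is uniform on $K \times \mathcal{M}_\eta$ for any compact $K \subset (-\infty, \lcrit(\eta))$. This reduces to uniform (in $\w \in \Sigma_\eta^\Z$ and $\l \in K$) convergence of $\Phi_{0,M}(\l) \to \Phi_0(\l)$ and $\mu_{0,M}(\l) \to \mu_0(\l)$. The first is obtained by picking $\l' \in (\sup K, \lcrit(\eta))$ and writing
\[
\Phi_0(\l)(i,j) - \Phi_{0,M}(\l)(i,j) = E_\w^{(0,i)}\bigl[e^{\l T_1} \ind{T_1 > M,\, Y_{T_1} = j}\bigr] \leq e^{-(\l'-\l)M} \Phi_0(\l')(i,j) \leq \frac{e^{-(\l'-\l)M}}{c_{\l'}}.
\]
The second is obtained by a triangle-inequality argument that inserts the common partial products $e_i \Phi_{[m,-1]}(\l)/(e_i \Phi_{[m,-1]}(\l)\one)$ and $e_i \Phi_{[m,-1],M}(\l)/(e_i \Phi_{[m,-1],M}(\l)\one)$: choose $m$ very negative so that the errors from \eqref{muerror} and \eqref{muMerror} are uniformly small, then for this fixed $m$ use the uniform $\Phi$-convergence above (applied $|m|$ times, with norm bounds from Lemma~\ref{PhiUboundlem}) to control the remaining middle term. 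The hardest technical point is this uniform convergence of $\mu_{0,M}$ to $\mu_0$, since $\mu_0$ is itself defined only as a limit; it succeeds precisely because the uniform ellipticity bounds of Lemma~\ref{PhiUboundlem} and the error estimates of Lemma~\ref{muMdeflem} extend to the full support $\Sigma_\eta^\Z$, not merely to an $\eta$-a.s.\ set.
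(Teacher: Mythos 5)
Your proposal is correct, but it takes a genuinely different route from the paper. You approximate $\L_\a$ by the singly-truncated $\L_{\a,M}(\l) = E_\a[\log(\mu_{0,M}(\l)\Phi_{0,M}(\l)\one)]$, and the hard work in your argument goes into proving that $\mu_{0,M}(\l,\w)$ is itself jointly continuous on $\R \times \Sigma_\eta^\Z$, which you do via the uniform (over $\Sigma_\eta^\Z$ and compact $\l$-sets) convergence of the normalized partial products in Lemma~\ref{muMdeflem}. The paper instead introduces a \emph{doubly} truncated quantity $\L_{\a,M,n,i}(\l) = E_\a[\tfrac{1}{n}\log E_\w^{(0,i)}[e^{\l T_n}\ind{\tau_k \leq M,\ k \leq n}]]$, which depends on $\w$ only through finitely many coordinates and is a finite sum over paths, so its joint continuity is immediate and no continuity argument for $\mu_{0,M}$ is ever needed. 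The paper then controls $|\L_\a - \L_{\a,M,n,i}|$ by two error terms (one of order $1/n$ from \eqref{eq:qlmgfUL1bound}, one of order $(\text{const})^n\,e^{(\l-\lcrit(\eta))M}$ from truncating $\Phi_k$ to $\Phi_{k,M}$), sending $n\to\infty$ first and then $M\to\infty$ depending on $n$. Your route buys conceptual directness (continuity of the single natural object $\L_{\a,M}$) at the cost of the more delicate $\mu_{0,M}$-continuity argument; the paper's double truncation trades that delicacy for an extra truncation parameter and a slightly more involved error bookkeeping. Both establish the needed uniform approximation on compacts $K\times\mathcal{M}_\eta$ with $K\subset(-\infty,\lcrit(\eta))$, and both handle the boundary case $\l=\lcrit(\eta)$ via monotonicity/lower-semicontinuity (you via $\L_\a=\sup_M\L_{\a,M}$, the paper via direct comparison with $\L_{\a_n}(\lcrit(\eta)-\d)$). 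Two small remarks: your monotonicity claim $\L_{\a,M}\nearrow\L_\a$ should be justified by monotonicity in $M$ of the pre-limit quantities $n^{-1}\log E_\w[e^{\l T_n}\ind{\tau_k\leq M}]$ and then passing to the $n\to\infty$ limit, rather than directly from $\mu_{0,M}$, which is not itself monotone in $M$; and the stated bound $(\k/2)e^\l\leq\Phi_{0,M}(\l)(i,j)$ for $\l>0$ should be cited with the slightly different constant appearing in the proof of Lemma~\ref{muMdeflem}, though this affects nothing.
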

\begin{proof}
Recall the definition of the truncated moment generating functions $\Phi_{k,M}(\l)$ given in \eqref{PhikMdef}. 
For any $M,n<\infty$ and $i \in [d]$ it is easy to see that the function 
\begin{equation}\label{eq:truncTn}
\begin{split}
 (\l,\a) \mapsto & E_\a \left[ \frac{1}{n} \log(e_i \Phi_{[0,n-1],M}(\l) \one) \right] \\
&= E_{\a}\left[ \frac{1}{n} \log E_\w^{(0,i)}\left[ e^{\l T_n} \ind{\tau_k \leq M, \, k=1,2,\ldots n} \right]\right] =: \L_{\a,M,n,i}(\l) 
\end{split}
\end{equation}
is jointly continuous on $\R \times \mathcal{M}_\eta$ since the inner quenched expectation can be expressed as the sum over finitely many possible paths. 
We would like to show that if $(\l,\a) \in (-\infty,\lcrit(\eta))\times \mathcal{M}_\eta$ then $\L_\a(\l)$ can be approximated by 
$\L_{\a,M,n,i}(\l)$
for sufficiently large $n$ and $M$. 
To this end, we first need to be able to give a uniform error bound on the difference between the entries of $\Phi_k(\l)$ and $\Phi_{k,M}(\l)$. 
\begin{align*}
 0 \leq \Phi_k(\l)(i,j) - \Phi_{k,M}(\l)(i,j) &= E_\w^{(k,i)}\left[ e^{\l T_{k+1}} \ind{M < T_{k+1} < \infty, \, Y_{T_{k+1}} = j} \right] \\
&\leq e^{(\l-\lcrit(\eta)) M}  E_\w^{(k,i)}\left[ e^{\lcrit(\eta) T_{k+1} } \ind{T_{k+1} < \infty, \, Y_{T_{k+1}} = j} \right].
\end{align*}
Therefore, 
Lemma \ref{PhiUboundlem} implies that
\[
 \| \Phi_k(\l) - \Phi_{k,M}(\l) \| \leq e^{(\l - \lcrit(\eta))M} \frac{2d}{\k}, \quad \forall \l \leq \lcrit(\eta), \, \w \in \Sigma_\eta^\Z. 
\]
Using this bound and the fact that $\|\Phi_{k,M}(\l) \| \leq \| \Phi_k(\l) \| \leq 2d/\k$ we can then obtain that 
\begin{equation}\label{eq:truncprod}
 \| \Phi_{[0,n-1]}(\l) - \Phi_{[0,n-1],M}(\l) \| 
\leq n \left( \frac{2d}{ \k } \right)^n e^{(\l-\lcrit(\eta))M}
, \quad \forall \l \leq \lcrit(\eta), \, \w \in \Sigma_\eta^\Z. 
\end{equation}

Assumption \ref{easm} implies that $e_i \Phi_{[0,n-1],M}(\l) \one \geq \k^n e^{\l n}$. 
Since $|\log(x) - \log(y)| \leq (1/\d) |x-y|$ for $x,y \geq \d$, this together with \eqref{eq:truncprod} implies that 
\begin{align*}
\left| \log( e_i \Phi_{[0,n-1]}(\l) \one) -  \log( e_i \Phi_{[0,n-1],M}(\l) \one) \right| 
&\leq \frac{1}{\k^n e^{\l n} }  \left| e_i \Phi_{[0,n-1]}(\l) \one - e_i \Phi_{[0,n-1],M}(\l) \one \right| \\
&\leq n \left( \frac{2d}{\k^2 e^\l } \right)^n e^{(\l-\lcrit(\eta))M},
\end{align*}
for all $\l \leq \lcrit(\eta)$ and all $\w \in \Sigma_\eta^\Z$. 
Combining this with \eqref{eq:qlmgfUL1bound} we can conclude that for any $\l < \lcrit(\eta)$ and $\a \in \mathcal{M}_\eta$,
\begin{align*}
 | \L_\a(\l) - \L_{\a,M,n,i}(\l) | 
&\leq \frac{2}{n (1-c_\l^4)c_\l^{10} } + \left( \frac{2d}{\k^2 e^\l } \right)^n e^{(\l-\lcrit(\eta))M}.
\end{align*}
Thus, by first taking $n$ sufficiently large and then taking $M$ large enough (depending on $n$) we can approximate $\L_\a(\l)$ uniformly well by $\L_{\a,M,n,i}(\l)$
on the set $[\l',\l'']\times \mathcal{M}_\eta$ for any $\l' \leq \l'' < \lcrit(\eta)$. 
Since $(\l,\a) \mapsto \L_{\a,M,n,i}(\l)$ is jointly continuous this then implies that $(\l,\a) \mapsto \L_\a(\l)$ is also jointly continuous as claimed.  

Finally, to prove lower semicontinuity at $(\lcrit(\eta),\a)$, let $(\l_n, \a_n) \ra (\lcrit(\eta),\a)$. Since $\l\mapsto \L_{\a'}(\l)$ is non-decreasing and continuous for any  $\a' \in \mathcal{M}_\eta$, it follows that 
\[
 \liminf_{n\ra\infty} \L_{\a_n}(\l_n) \geq \lim_{\d\ra 0} \lim_{n\ra\infty} \L_{\a_n}(\lcrit(\eta) -\d) = \lim_{\d\ra 0} \L_\a(\lcrit(\eta) - \d) = \L_\a(\lcrit(\eta)). 
\]
Note that in the second to last equality we used the continuity away from $\lcrit(\eta)$ that we proved above.  
\end{proof}

Recall that the averaged rate function is defined by the variational representation in \eqref{eq:arfTn}. The key to proving the averaged large deviation principle with this variational formula for the rate function is the following lemma which gives an alternative formula for $\mathbb{J}_\eta(t)$ as a Legendre transform. 

\begin{lem}\label{lem:arflt}
 Let the distribution on environments $\eta$ satisfy Assumptions \ref{easm} - \ref{asmlocal}. 
Then, 
\begin{equation}\label{eq:arflt}
 \mathbb{J}_\eta(t) = \sup_\l \{ \l t - \mathbf{\L}_\eta(\l) \},
\quad\text{where}\quad
\mathbf{\L}_\eta(\l) := \sup_{\a \in M_1^s(\Omega_\k)} \left\{ \L_\a(\l) - h(\a|\eta) \right\}. 
\end{equation}
Moreover, $\mathbf{\L}_\eta(\l)$ is a convex, non-decreasing, lower semicontinuous function and $\mathbf{\L}_\eta(\l) < \infty$ if and only if $\l \leq \lcrit(\eta)$. 
\end{lem}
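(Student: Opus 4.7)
The plan is in two parts: first verify the structural properties of $\mathbf{\L}_\eta$, then establish the Legendre-type identity $\mathbb{J}_\eta(t) = \sup_\l\{\l t - \mathbf{\L}_\eta(\l)\}$ via a minimax argument.

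For the structural properties, Lemma \ref{lem:hfinite} allows the supremum defining $\mathbf{\L}_\eta(\l)$ to be restricted to $\a \in \mathcal{M}_\eta$. For $\l \leq \lcrit(\eta)$, the uniform upper bound \eqref{eq:PhiUboundU} yields $\L_\a(\l) \leq \log(d/c_\l)$ on $\mathcal{M}_\eta$, so $\mathbf{\L}_\eta(\l) < \infty$. For $\l > \lcrit(\eta)$ the choice $\a = \eta$ (with $h(\eta|\eta) = 0$ and $\L_\eta(\l) = \infty$) forces $\mathbf{\L}_\eta(\l) = \infty$. Convexity, monotonicity in $\l$, and lower semicontinuity of $\mathbf{\L}_\eta$ then follow because $\mathbf{\L}_\eta$ is the pointwise supremum of functions $\l \mapsto \L_\a(\l) - h(\a|\eta)$ which are themselves convex, non-decreasing, and lower semicontinuous in $\l$, with lsc at the boundary $\l = \lcrit(\eta)$ supplied by Lemma \ref{lem:acont}.

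For the main identity, the easy direction $\sup_\l\{\l t - \mathbf{\L}_\eta(\l)\} \leq \mathbb{J}_\eta(t)$ follows directly from the definitions: for any ergodic $\a$, $\mathbf{\L}_\eta(\l) \geq \L_\a(\l) - h(\a|\eta)$ gives $\l t - \mathbf{\L}_\eta(\l) \leq \l t - \L_\a(\l) + h(\a|\eta)$, and taking $\sup_\l$ followed by $\inf$ over $\a \in M_1^e(\Omega_\k)$ yields the bound. For the reverse inequality, set $F(\l,\a) = \l t - \L_\a(\l) + h(\a|\eta)$ and apply Sion's minimax theorem on $[-M, \lcrit(\eta) - \e] \times \mathcal{M}_\eta$: $\mathcal{M}_\eta$ is convex and weakly compact, and on the open interval $(-\infty, \lcrit(\eta))$ the joint continuity of $\L_\cdot(\cdot)$ from Lemma \ref{lem:acont} combined with the affine dependence of $\L_\a$ on $\a$ and the convexity and lower semicontinuity of $h(\cdot|\eta)$ makes $F$ continuous and concave in $\l$ and lower semicontinuous and convex in $\a$. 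Sion's theorem yields
\begin{equation*}
\sup_{\l \in [-M, \lcrit(\eta)-\e]} \inf_{\a \in \mathcal{M}_\eta} F(\l,\a) = \inf_{\a \in \mathcal{M}_\eta} \sup_{\l \in [-M, \lcrit(\eta)-\e]} F(\l,\a).
\end{equation*}
Sending $M \to \infty$ (using $\L_\a(\l) \geq \l + \log\k$ for $\l \leq 0$ to force $\l t - \L_\a(\l) \to -\infty$) and $\e \to 0$ (using left-continuity of the convex functions $\mathbf{\L}_\eta$ and $\L_\a$), the LHS converges to $\sup_\l\{\l t - \mathbf{\L}_\eta(\l)\}$ and the RHS to $\inf_{\a \in \mathcal{M}_\eta}(J_\a(t) + h(\a|\eta))$. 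Assumption \ref{asmlocal}, together with joint continuity of $\L_\cdot(\cdot)$ and uniform localization of the maximizer of $\l \mapsto \l t - \L_\a(\l)$ in a compact subset of $\R$ (via the universal bound $\lcrit \leq -\log(\k^2/2)$ and the lower bound on $\L_\a$), finally allows passage from stationary to ergodic: given $\a \in \mathcal{M}_\eta$, there exist ergodic $\a_n \to \a$ with $h(\a_n|\eta) \to h(\a|\eta)$ and $J_{\a_n}(t) \to J_\a(t)$, so $\inf_{\a \in \mathcal{M}_\eta}(J_\a + h(\a|\eta)) = \inf_{\a \in M_1^e}(J_\a + h(\a|\eta)) = \mathbb{J}_\eta(t)$.

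The main obstacle is the behavior at the boundary $\l = \lcrit(\eta)$: Lemma \ref{lem:acont} provides only joint lower semicontinuity there rather than continuity, and a given $\a \in \mathcal{M}_\eta$ may satisfy $\lcrit(\a) > \lcrit(\eta)$, so that $\sup_{\l \leq \lcrit(\eta)}(\l t - \L_\a(\l))$ could in principle be strictly smaller than $J_\a(t)$. The plan circumvents this by applying Sion's theorem strictly inside the effective domain of $\mathbf{\L}_\eta$ and passing to the limit via the already-established lsc of $\mathbf{\L}_\eta$, together with a careful verification that the universal ellipticity bounds constrain the Legendre-dual maximizer of every relevant $\a$ to a fixed compact subinterval of $(-\infty, \lcrit(\eta)]$.
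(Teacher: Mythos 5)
The structural properties of $\mathbf{\L}_\eta$ are established exactly as in the paper, and the overall strategy for the main identity — Sion-type minimax on the compact set $\mathcal{M}_\eta$, followed by a passage from stationary to ergodic measures via Assumption \ref{asmlocal} — is also the same. Your variant of applying Sion on the truncated rectangle $[-M,\lcrit(\eta)-\e]\times\mathcal{M}_\eta$ and then sending $M\to\infty,\,\e\to 0$ is a reasonable way to sidestep the failure of joint continuity of $(\l,\a)\mapsto\L_\a(\l)$ at the boundary $\l=\lcrit(\eta)$, and your diagnosis of that boundary subtlety is correct. (The paper applies the minimax theorem directly with only $\mathcal{M}_\eta$ compact, restricting $\l$ to $[K_t,\lcrit(\eta)]$ with $K_t=\log\k/(t-1)$.)

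However, there are two genuine gaps. First, the case $t=1$ is not handled. Your justification for sending $M\to\infty$ relies on $\L_\a(\l)\geq\l+\log\k$ for $\l\leq 0$ forcing $\l t-\L_\a(\l)\to-\infty$; but for $t=1$ this only gives $\l-\L_\a(\l)\leq-\log\k$, which is bounded, not divergent, so the $\l$-supremum does not localize in a compact set as $M\to\infty$. The paper devotes a separate minimax argument to $t=1$, compactifying the $\l$-variable by adjoining $-\infty$ and using $f_\eta(-\infty,\a):=\lim_{\l\to-\infty}f_\eta(\l,\a)$. Second, your ergodic-approximation step asserts $J_{\a_n}(t)\to J_\a(t)$ for the Assumption \ref{asmlocal} approximants $\a_n\to\a$. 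This full convergence is not available: $J_\cdot(t)$ is only lower semicontinuous (as a supremum of $\a\mapsto\l t-\L_\a(\l)$, which is continuous for $\l<\lcrit(\eta)$), and since $\lcrit(\a_n)$ may strictly exceed $\lcrit(\eta)$ the unrestricted maximizer of $\l\mapsto\l t-\L_{\a_n}(\l)$ may lie beyond $\lcrit(\eta)$, so neither upper nor lower semicontinuity of $J_\cdot(t)$ at $\a$ can be deduced from the available estimates. What the paper proves (and what suffices) is a one-sided bound: extract a convergent subsequence of the constrained maximizers $\l_n\in[K_t,\lcrit(\eta)]$ with $\l_n\to\l^*$, then apply the joint lower semicontinuity of Lemma \ref{lem:acont} to conclude $\liminf_n(\l_n t-\L_{\a_n}(\l_n)+h(\a_n|\eta))\leq\l^*t-\L_{\a'}(\l^*)+h(\a'|\eta)$, which bounds the ergodic infimum by the stationary infimum. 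Your argument should be recast along these lines rather than asserting $J_{\a_n}(t)\to J_\a(t)$.
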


Before giving the proof of Lemma \ref{lem:arflt}, note that together with standard properties of Legendre transforms it implies the following Corollary. 

\begin{cor}\label{cor:arfinf}
 Let the distribution on environments $\eta$ satisfy Assumptions \ref{easm} - \ref{asmlocal}. 
Then, $\mathbb{J}_\eta(t)$ is a convex function in $t$ and 
\begin{equation}\label{eq:arfinf1}
 \inf_{s\leq t} \mathbb{J}_\eta(s) = \sup_{\l < 0} \left\{ \l t - \mathbf{\L}_\eta(\l) \right\}
\end{equation}
and if $\lcrit(\eta)>0$ then 
\begin{equation}\label{eq:arfinf2}
 \inf_{s\geq t} \mathbb{J}_\eta(s) = \sup_{0 \leq \l < \lcrit(\eta)} \left\{ \l t - \mathbf{\L}_\eta(\l) \right\}. 
\end{equation}
\end{cor}

\begin{proof}
The fact that $\mathbb{J}_\eta(t)$ is a convex function follows from the representation in \eqref{eq:arflt} of $\mathbb{J}_\eta(t)$ as the Legendre transform of $\mathbf{\L}_\eta(\l)$. 
The equalities \eqref{eq:arfinf1} and \eqref{eq:arfinf2} follow from standard properties of Legendre transforms and are thus ommitted (for more details see the proof of the corresponding equalities in the proof of Proposition 3 in \cite{dgzLDPH}). 
\end{proof}

\begin{proof}[Proof of Lemma \ref{lem:arflt}]
Since $\L_\a(\l)$ is convex and non-decreasing in $\l$ for any $\a \in M_1^s(\Omega_\k)$, it follows that $\mathbf{\L}_\eta(\l)$ is also convex and non-decreasing in $\l$. 
The definition of $\mathbf{\L}_\eta(\l)$ implies that $\mathbf{\L}_\eta(\l) \geq \L_\eta(\l)$, and thus it follows that $\mathbf{\L}_\eta(\l) = \infty$ for any $\l > \lcrit(\eta)$. 
On the other hand, if $\l \leq \lcrit(\eta)$ then 
\[
 \mathbf{\L}_\eta(\l) = \sup_{\a \in \mathcal{M}_\eta} \left\{ \L_\a(\l) - h(\a|\eta) \right\} \leq \sup_{\a \in \mathcal{M}_\eta} \L_\a(\l) \leq \log(d'/c_\l), 
\]
where the first equality follows from the fact that $h(\a|\eta) = \infty$ for $\a \notin \mathcal{M}_\eta$ and the last equality follows from the uniform upper bound in \eqref{eq:PhiUboundU} on the entries of $\Phi_0(\l)$ for environments $\w \in \Sigma_\eta^\Z$. 
This shows that the domain of $\mathbf{\L}_\eta(\l)$ is $(-\infty, \lcrit(\eta)]$. 
For any $\a \in \mathcal{M}_\eta$ the function $\l \mapsto \L_\a(\l)$ is continuous on $(-\infty,\lcrit(\eta)]$
(continuity follows from Lemma \ref{lem:acont} when on $(-\infty,\lcrit(\eta))$ and by monotone convergence at $\l = \lcrit(\eta)$). 
Since $ \mathbf{\L}_\eta(\l)$ is the supremum of a family of continuous functions this implies that $\mathbf{\L}_\eta(\l)$ is lower semicontinuous.

We have thus shown the claimed properties of $\mathbf{\L}_\eta(\l)$ and it remains to show that $\mathbb{J}_\eta(t)$ is the Legendre transform of $\mathbf{\L}_\eta(\l)$. 
First, note that the supremum in the definition of $\mathbf{\L}_\eta(\l)$ can be restricted to $\a \in \mathcal{M}_\eta$ 
by Lemma \ref{lem:hfinite}. 
Next, we claim that
\begin{align}
 \sup_{\l} \left\{ \l t - \mathbf{\L}_\eta(\l) \right\}
&= \sup_{\l \leq \lcrit(\eta)} \inf_{\a \in \mathcal{M}_\eta} \left\{ \l t - \L_\a(\l) + h(\a|\eta) \right\} \nonumber \\
&= \inf_{\a \in \mathcal{M}_\eta} \sup_{\l \leq \lcrit(\eta)} \left\{ \l t - \L_\a(\l) + h(\a|\eta) \right\}. \label{eq:infsup}
\end{align}
The restriction of the supremum to $\l \leq \lcrit(\eta)$ in the first equality is justified by the fact that $\mathbf{\L}_\eta(\l) = \infty$ for $\l > \lcrit(\eta)$ and the interchange of the supremum and infimum in the second equality above follows from a minimax theorem \cite[Theorem 4.2']{sMinimax} since the function $(\l,\a)\mapsto  \l t - \L_\a(\l) + h(\a|\eta)$ is concave in $\l$, convex in $\a$, and the set $\mathcal{M}_\eta$ is compact. 

To finish the proof of the lemma, we need to show that the infimum in \eqref{eq:infsup} can be restricted to $\a \in M_1^e(\Omega_\k)$. To this end, first note that since the function $\a \mapsto h(\a|\eta) + \sup_{\l < \lcrit(\eta)} \{ \l t - \L_\a(\l) \}$ is lower semicontinuous and the set $\mathcal{M}_\eta$ is compact, there exists an $\a' \in \mathcal{M}_\eta$ that achieves the infimum in \eqref{eq:infsup}. 
As in the proof of Lemma \ref{Jetaprop}, the uniform ellipticity assumptions imply that $\L_\eta(\l) \geq \l + \log \k$ for all $\l\leq 0$. Thus, for any $t>1$ the supremum in \eqref{eq:infsup} can be restricted to $\l \geq K_t := \log \k/(t-1)$. 
Since $\l \mapsto \l t - \L_{\a'}(\l)$ is concave there exists a pair $(\a',\l') \in \mathcal{M}_\eta \times [K_t,\lcrit(\eta)]$ such that 
\begin{align}\label{eq:optimalpair}
\begin{split}
\inf_{\a \in \mathcal{M}_\eta} \sup_{\l \leq \lcrit(\eta)} \left\{ \l t - \L_\a(\l) + h(\a|\eta) \right\}
&= \l' t - \L_{\a'}(\l') + h(\a'|\eta). 
\end{split}
\end{align}
By Assumption \ref{asmlocal} there exists a sequence of ergodic measures $\a_n \ra \a'$ with $h(\a_n|\eta) \ra h(\a'|\eta)$. For each $\a_n$ let $\l_n \in [K_t,\lcrit(\eta)]$ be such that $\sup_{\l\leq \lcrit(\eta)} \{ \l t - \L_{\a_n}(\l) \} = \l_n t - \L_{\a_n}(\l_n)$. 
Thus, by passing to a subsequential limit we can assume without loss of generality that $\l_n \ra \l^*$ for some $\l^* \in [K_t,\lcrit(\eta)]$.
Finally, by the lower semicontinuity proved in Lemma \ref{lem:acont} we can conclude that 
\begin{align}
 \inf_{\a \in M_1^e(\Omega_\k)} \sup_{\l \leq \lcrit(\eta)} \left\{ \l t - \L_\a(\l) + h(\a|\eta) \right\}
&\leq \liminf_{n \ra \infty} \l_n t - \L_{\a_n}(\l_n) + h(\a_n|\eta) \nonumber \\
&\leq \l^*t - \L_{\a'}(\l^*) + h(\a'|\eta). \label{eq:suboptimalpair}
\end{align}
Combining \eqref{eq:optimalpair} and \eqref{eq:suboptimalpair} we conclude that 
\[
 \inf_{\a \in M_1^e(\Omega_\k)} \sup_{\l \leq \lcrit(\eta)} \left\{ \l t - \L_\a(\l) + h(\a|\eta) \right\}
\leq \inf_{\a \in M_1^s(\Omega_\k)} \sup_{\l < \lcrit(\eta)} \left\{ \l t - \L_\a(\l) + h(\a|\eta) \right\}. 
\]
Since the reverse inequality is trivial, recalling \eqref{eq:infsup} we can conclude for $t>1$ that 
\[
 \sup_\l \{ \l t - \mathbf{\L}_\eta(\l) \} = \inf_{\a \in M_1^e(\Omega_\k)} \sup_{\l < \lcrit(\eta)} \left\{ \l t - \L_\a(\l) + h(\a|\eta) \right\}
= \inf_{\a \in M_1^e(\Omega_\k)} J_\a(t) + h(\a|\eta) = \mathbb{J}_\eta(t). 
\]

It is easy to see that both sides of \eqref{eq:arflt} are infinite when $t < 1$ since $\mathbf{\L}_\eta(\l) \leq \l$ for $\l \leq 0$ and $J_\a(t) = \infty$ for $t<1$ and any $\a$. Thus, it remains only to show that \eqref{eq:arflt} holds when $t=1$. 
We will show this by a slight variation in the minimax argument used above in the case when $t>1$. First, note that since $\mathbf{\L}_\eta(\l)$ is convex and $\l + \log \k \leq \mathbf{\L}_\eta(\l) \leq \l$ for $\l \leq 0$, it follows that 
\begin{equation}\label{lbLlim}
 \sup_\l \{ \l - \mathbf{\L}_\eta(\l) \} = \lim_{\l \ra -\infty} \{ \l - \mathbf{\L}_\eta(\l) \}. 
\end{equation}
Secondly, note that the continuity of $\a \mapsto \L_\a(\l)$ implies that the supremum in the definition of $\mathbf{\L}_\eta(\l)$ can be restricted to $\a \in M_1^e(\Omega_\k)$ if $\l < \lcrit(\eta)$ (Note that here we are also using Assumption \ref{asmlocal} here to approximate the entropy for stationary $\a$ by entropy of ergodic $\a$.). Combining these two facts we obtain that 
\begin{equation}\label{superg}
 \sup_\l \{ \l - \mathbf{\L}_\eta(\l) \} = \sup_{\l \leq -1} \inf_{\a \in M_1^e(\Omega_\k)} \left\{ \l - \L_\a(\l) + h(\a|\eta)\right \}. 
\end{equation}
For convenience of notation define $f_\eta(\l,\a) = \l - \L_\a(\l) + h(\a|\eta)$. Note that as in \eqref{lbLlim}, the upper and lower bounds on $\L_\a(\l)$ in \eqref{Letalbub} imply that 
\begin{equation}\label{lLlim}
 \sup_\l \{ \l - \L_\a(\l) \} = \lim_{\l \ra -\infty} \{ \l - \L_\a(\l) \}, \quad \forall \a \in M_1^s(\Omega_\k). 
\end{equation}
Thus, we can define $f_\eta(-\infty,\a) = \lim_{\l \ra -\infty} f_\eta(\l,\a)$ so that we may write
\[
 \sup_\l\{ \l - \mathbf{\L}_\eta(\l) \} = \sup_{\l \in [-\infty, -1]} \inf_{\a \in M_1^e(\Omega_\k)} f(\l,\a). 
\]
Since $[-\infty,-1]$ is compact, if we can show that $f_\eta(\l,\a)$ is lower semicontinuous in $\a$ for any $\l \in [-\infty,-1]$ we will be able to apply the minimax theorem \cite[Theorem 4.2']{sMinimax} to conclude that 
\begin{equation}\label{eq:mm2}
\begin{split}
 \sup_\l\{ \l - \mathbf{\L}_\eta(\l) \}
 &= \inf_{\a \in M_1^e(\Omega_\k)} \sup_{ \l \in [-\infty,-1] } \left\{ \l - \L_\a(\l) + h(\a|\eta) \right\} \\
 &= \inf_{\a \in M_1^e(\Omega_\k)} \left\{ J_\a(1) + h(\a|\eta) \right\} = \mathbb{J}_\eta(1). 
 \end{split}
\end{equation}
(Note that we are applying a different minimax theorem here than we did above.)
Lemma \ref{lem:acont} implies that $f_\eta(\l,\a)$ is lower semicontinuous in $\a$ for any $\l \in (-\infty,\lcrit(\eta))$. To prove lower semicontinuity in $\a$ when $\l=-\infty$, if $\a_n \ra \a$ then for any fixed $\l_0 > -\infty$ 
\begin{align*}
 \liminf_{n\ra\infty} f_\eta(-\infty,\a_n) 
& = \liminf_{n\ra\infty} \lim_{\l \ra -\infty} \{ \l - \L_{\a_n}(\l) + h(\a_n|\eta) \\
&\geq \liminf_{n\ra\infty} \{ \l_0 - \L_{\a_n}(\l_0) + h(\a_n|\eta) \} \\
&\geq \l_0 - \L_\a(\l_0) + h(\a|\eta), 
\end{align*}
where in the second to last inequality we used \eqref{lLlim} and in the last inequality we used Lemma \ref{lem:acont} and the fact that $h(\a|\eta)$ is lower semicontinuous in $\a$. 
Since this is true for any $\l_0 \in (-\infty,\lcrit(\eta))$ we can conclude that 
$\liminf_{n\ra\infty} f_\eta(-\infty,\a_n) \geq \lim_{\l_0 \ra -\infty} f_\eta(\l_0,\a) = f_\eta(-\infty,\a)$. 
This justifies our application of the minimax argument in \eqref{eq:mm2} and thus finishes the proof of the Lemma. 
\end{proof}

The final property of the averaged rate function $\mathbb{J}_\eta(t)$ that we will consider is a characterization of the zero set. 
Lemma \ref{qrfpropTn} gives a description of where the quenched rate function for hitting times $J_\eta(t)$ is zero. A consequence of Lemma \ref{lem:acont} is that the averaged rate function has the same zero set.  
\begin{lem}\label{arfpropTn}
 If the measure $\eta$ on environments satisfies Assumptions \ref{easm}, \ref{asmplldp} and \ref{asmlocal}, then $\mathbb{J}_\eta(t) = 0 \iff J_\eta(t) = 0$. 
In particular, if the RWRE is recurrent or transient to the right then 
$\mathbb{J}_\eta(t)$ is non-increasing on $[1,1/\vp]$ and non-decreasing on $[1/\vp,\infty)$. 
\end{lem}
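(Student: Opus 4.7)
My plan is to split the biconditional into its two implications, with the forward direction being immediate and the reverse being the substantive part. For the direction $J_\eta(t) = 0 \Ra \mathbb{J}_\eta(t) = 0$, I would simply set $\a = \eta$ in the variational formula \eqref{eq:arfTn}, obtaining $\mathbb{J}_\eta(t) \leq J_\eta(t) + h(\eta|\eta) = J_\eta(t)$; since $\mathbb{J}_\eta \geq 0$ (because $J_\a \geq 0$ and $h(\cdot|\eta) \geq 0$), vanishing of $J_\eta(t)$ forces vanishing of $\mathbb{J}_\eta(t)$.

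For the reverse direction, suppose $\mathbb{J}_\eta(t) = 0$, and choose a sequence $\a_n \in M_1^e(\Omega_\k)$ with $J_{\a_n}(t) + h(\a_n|\eta) \to 0$. Since each summand is non-negative, both tend to zero separately. The next step is to argue $\a_n \to \eta$ weakly: by Assumption \ref{asmplldp} the rate function $h(\cdot|\eta)$ has compact sub-level sets, and since the ergodic theorem gives $L_n \to \eta$ weakly $\eta$-a.s., the zero set of $h(\cdot|\eta)$ is exactly $\{\eta\}$; hence every weak subsequential limit of $\{\a_n\}$ must equal $\eta$. Moreover, Lemma \ref{lem:hfinite} places $\a_n$ inside $\mathcal{M}_\eta$ for all large $n$. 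For each fixed $\l < \lcrit(\eta)$, the continuity of $\a \mapsto \L_\a(\l)$ on $\mathcal{M}_\eta$ supplied by Lemma \ref{lem:acont} yields $\L_{\a_n}(\l) \to \L_\eta(\l)$; combined with the trivial inequality $\L_{\a_n}(\l) \geq \l t - J_{\a_n}(t)$, passing to the limit gives
\[
 \L_\eta(\l) \geq \l t \qquad \text{for all } \l < \lcrit(\eta).
\]
Left-continuity of $\L_\eta$ at $\lcrit(\eta)$ extends this to $\l = \lcrit(\eta)$, and the inequality is trivial for $\l > \lcrit(\eta)$ where $\L_\eta(\l) = \infty$; thus $J_\eta(t) = \sup_\l (\l t - \L_\eta(\l)) \leq 0$, and with $J_\eta \geq 0$ this gives $J_\eta(t) = 0$.

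Finally, for the monotonicity statement I would use that under the recurrent or transient-to-the-right hypothesis Lemma \ref{qrfpropTn} gives $J_\eta(1/\vp) = 0$, so the equivalence just proved yields $\mathbb{J}_\eta(1/\vp) = 0$; combined with convexity of $\mathbb{J}_\eta$ from Lemma \ref{lem:arflt} and its non-negativity, this forces $\mathbb{J}_\eta$ to be non-increasing on $[1, 1/\vp]$ and non-decreasing on $[1/\vp,\infty)$ (interpreting the second sentence of the statement as a claim about $\mathbb{J}_\eta$, since the monotonicity of $J_\eta$ is already contained in Lemma \ref{Jetaprop}). The hard part is the reverse implication, where two ingredients are essential: the fact that $h(\cdot|\eta)$ has its unique zero at $\eta$ (a consequence of Assumption \ref{asmplldp} together with ergodicity of $\eta$), which upgrades $h(\a_n|\eta) \to 0$ to the weak convergence $\a_n \to \eta$, and the pointwise continuity of $\L_\a(\l)$ in $\a$ from Lemma \ref{lem:acont}, which then transfers the vanishing of $J_{\a_n}(t)$ to the vanishing of $J_\eta(t)$.
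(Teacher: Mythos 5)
Your proof is correct and takes essentially the same route as the paper: the forward direction is the trivial $\mathbb{J}_\eta \leq J_\eta$ bound obtained by plugging $\a = \eta$ into the variational formula, and the reverse direction hinges on extracting a sequence $\a_n$ with $J_{\a_n}(t)\to 0$ and $h(\a_n|\eta)\to 0$, deducing $\a_n \to \eta$ from the goodness of the entropy rate function, and then transferring via the joint continuity in Lemma \ref{lem:acont}. The only cosmetic difference is that you argue the reverse direction directly (deriving $\L_\eta(\l)\geq\l t$ for all $\l<\lcrit$) whereas the paper argues by contradiction (fixing a $\l'<\lcrit$ witnessing $J_\eta(t)>0$ and showing $\liminf J_{\a_n}(t)>0$); these are the same argument read in opposite directions. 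You also correctly flag that the monotonicity clause of the lemma is really a claim about $\mathbb{J}_\eta$, which is how the paper's own proof reads it.
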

\begin{proof}
 Obviously from the definition of the averaged rate function in \eqref{eq:arfTn}, it follows that $\mathbb{J}_\eta(t) \leq J_\eta(t)$ for all $t$ and so $J_\eta(t) = 0$ implies that $\mathbb{J}_\eta(t) = 0$ also. On the other hand, assume for contradiction that $J_\eta(t) > 0$ but $\mathbb{J}_\eta(t) = 0$. Then, there exists a sequence $\a_n \in M_1^e(\Omega_\k)$ of ergodic measures such that $J_{\a_n}(t) \ra 0$ and $ h(\a_n|\eta) \ra 0$ as $n\ra\infty$. If $h(\a_n|\eta) \ra 0$, then it must be true that $\a_n \ra \eta$. However, if $J_\eta(t) > 0$ then there exists a $\l' < \lcrit(\eta)$ such that $\l' t - \L_\eta(\l') > 0$, and thus Lemma \ref{lem:acont} implies that
\[
 \liminf_{n\ra\infty} J_{\a_n}(t) \geq \liminf_{n\ra\infty} \l' t - \L_{\a_n}(\l') = \l' t - \L_\eta(\l') > 0.
\]
Since this contradicts the claim that $J_{\a_n}(t) \ra 0$ as $n\ra\infty$ this completes the proof that the zero sets of $J_\eta$ and $\mathbb{J}_\eta$ are identical. 
The final claim follows from the fact that $\mathbb{J}_\eta(t)$ is a non-negative convex function and $\mathbb{J}_\eta(1/\vp) = \mathbb{J}_\eta(t_0) = 0$. 
\end{proof}

\subsection{Upper bound}

As in the quenched case, the key to proving the large deviation uppper bound is computing the asymptotics of the averaged log moment generating functions of $T_n$. 
\begin{lem}\label{lem:almgf}
 Let the distribution on environments $\eta$ satisfy Assumptions \ref{easm}, \ref{asmplldp} and \ref{asmlocal}. Then,
\[
 \limsup_{n\ra\infty} \frac{1}{n} \log \E^\pi\left[ e^{\l T_n} \ind{T_n < \infty} \right] \leq 
\mathbf{\L}_\eta(\l), 
\quad \forall \l < \lcrit(\eta). 
\]
\end{lem}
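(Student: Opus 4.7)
The plan is to bound the quenched log moment generating function by a Birkhoff sum of a function of the environment and then apply Varadhan's lemma using the process-level LDP for the empirical measure $L_n$ from Assumption \ref{asmplldp}. For $\l < \lcrit(\eta)$ and $\w \in \Sigma_\eta^\Z$, the entry-wise comparison $\Phi_k(\l) \leq (1+\e_M)\Phi_{k,M}(\l)$ (with $\e_M \to 0$ as $M\to\infty$; this follows from the bound $\|\Phi_k - \Phi_{k,M}\| \leq \frac{2d}{\k}e^{(\l-\lcrit(\eta))M}$ established in the proof of Lemma \ref{lem:acont} together with the uniform lower bound $\Phi_{k,M}(\l)(i,j) \geq c_\l$ from Remark \ref{rem:PhikMUbounds}) yields the truncation bound
\[
\E^\pi\!\left[e^{\l T_n}\ind{T_n<\infty}\right] \leq (1+\e_M)^n\, \E^\pi\!\left[e^{\l T_n}\ind{\tau_k \leq M, \, k=1,\ldots,n}\right].
\]

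Next, applying the Birkhoff-sum bound from the proof of Lemma \ref{tqlmgflim} (the truncated analogue of \eqref{eq:qlmgfUbound}) and replacing $\mu_{k,M}(\l)$ by its continuous, finite-range approximation
\[
\mu_{0,M}^{(m)}(\l,\w) := \frac{e_1\,\Phi_{[-m,-1],M}(\l,\w)}{e_1\,\Phi_{[-m,-1],M}(\l,\w)\mathbf{1}}
\]
(whose error $\|\mu_{k,M}^{(m)}-\mu_{k,M}\|_1 \leq \e_m$ from \eqref{muMerror} tends to $0$ uniformly in $\w$ as $m\to\infty$), I would obtain
\[
\frac{1}{n}\log E_\w^\pi\!\left[e^{\l T_n}\ind{T_n<\infty}\right] \leq \log(1+\e_M) + C\e_m + \int G_\l^{(m,M)}\,dL_n + \frac{C_\l}{n}, \quad \w \in \Sigma_\eta^\Z,
\]
where
\[
G_\l^{(m,M)}(\w) := \log\!\left(\mu_{0,M}^{(m)}(\l,\w)\,\Phi_{0,M}(\l,\w)\mathbf{1}\right)
\]
is continuous (depending only on the coordinates $\w_{-m-M+1},\ldots,\w_0$) and uniformly bounded by constants depending only on $\l$ and $\k$. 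Exponentiating, integrating against $\eta$, and invoking Varadhan's lemma---whose hypotheses are met by Assumption \ref{asmplldp} and the boundedness/continuity of $G_\l^{(m,M)}$---gives
\[
\limsup_{n\ra\infty} \frac{1}{n}\log\E^\pi\!\left[e^{\l T_n}\ind{T_n<\infty}\right] \leq \log(1+\e_M) + C\e_m + \sup_{\a \in \mathcal{M}_\eta}\!\left\{E_\a\!\left[G_\l^{(m,M)}\right] - h(\a|\eta)\right\},
\]
where the restriction to $\a \in \mathcal{M}_\eta$ is justified by Lemma \ref{lem:hfinite}.

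Finally I would let $m,M \to \infty$. The geometric decay $e^{(\l-\lcrit(\eta))M}$ of the truncation error combined with the uniform estimates \eqref{muerror} and \eqref{muMerror} yields the uniform convergence $G_\l^{(m,M)} \to G_\l := \log(\mu_0(\l)\Phi_0(\l)\mathbf{1})$ on $\Sigma_\eta^\Z$. Since $\supp\a \subseteq \Sigma_\eta^\Z$ for every $\a \in \mathcal{M}_\eta$, this gives $E_\a[G_\l^{(m,M)}] \to \L_\a(\l)$ uniformly in $\a \in \mathcal{M}_\eta$, so the supremum converges to $\sup_{\a \in \mathcal{M}_\eta}\{\L_\a(\l)-h(\a|\eta)\} = \mathbf{\L}_\eta(\l)$, which is the desired bound. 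The main obstacle is precisely this final interchange of the $m,M \to \infty$ limit with the supremum over $\a$: it requires genuine uniformity of the approximation on $\Sigma_\eta^\Z$, which is why the hypothesis $\l < \lcrit(\eta)$ enters in an essential way through the geometric factor $e^{(\l-\lcrit(\eta))M}$ driving the truncation error to zero.
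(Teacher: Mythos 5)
Your proof is correct, and it takes a genuinely different route from the paper's at one key point. The paper applies a version of Varadhan's upper bound (Lemma 4.3.6 of Dembo--Zeitouni) directly to the functional $\a \mapsto \int \log(\mu_0(\l)\Phi_0(\l)\mathbf{1})\,d\a$, and justifies the needed semicontinuity by appealing to Lemma \ref{lem:acont}; this is delicate because the integrand $\log(\mu_0(\l)\Phi_0(\l)\mathbf{1})$ depends on infinitely many coordinates of $\w$ and hence is not a continuous function on $\Omega_\k$, so the map $\a \mapsto E_\a[\,\cdot\,]$ is not obviously upper semicontinuous on $M_1(\Omega_\k)$ away from the stationary simplex. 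You instead perform a second layer of truncation: besides the $M$-truncation $\Phi_{k,M}$, you cut $\mu_{k,M}$ off at range $m$ to obtain a bona fide cylinder function $G_\l^{(m,M)}$, apply the \emph{standard} Varadhan upper bound for bounded continuous observables, and remove the truncations at the end using the geometric error bounds from \eqref{muerror}, \eqref{muMerror} and the $e^{(\l-\lcrit(\eta))M}$ estimate. This buys you an application of the textbook Varadhan lemma with no ambiguity about continuity, at the cost of an extra approximation argument and the double limit $m,M\to\infty$ (which your error estimates do support: the $\e_m$ contribution and the $(1-c_\l^4)^{m-1}$ part of the $G$-error are controlled by $m$ alone, while $\e_M$ and the remainder of the $G$-error are controlled by $M$ alone, so the two limits can be taken independently). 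One small caution: you should make the convergence $\|\mu_{0,M}-\mu_0\|_1 \to 0$ uniform over $\Sigma_\eta^\Z$ explicit, since Lemma \ref{muMdeflem} only records pointwise convergence via a subsequence argument; the uniform rate does follow by interpolating through a fixed finite-range approximant and using \eqref{eq:truncprod}, but it deserves a line. With that added, the argument is complete.
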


\begin{proof}
 We begin by noting that
\begin{align*}
 \frac{1}{n} \log \E^\pi\left[ e^{\l T_n} \ind{T_n < \infty} \right]
&= \frac{1}{n} \log E_\eta\left[ E_\w^\pi\left[ e^{\l T_n} \ind{T_n < \infty} \right] \right]\\
&= \frac{1}{n} \log E_\eta \left[ \exp\left\{ \log E_\w^\pi\left[ e^{\l T_n} \ind{T_n < \infty} \right]  \right\} \right].
\end{align*}
Then, since we can approximate $\log E_\w^\pi\left[ e^{\l T_n} \ind{T_n < \infty} \right]$ by $\sum_{k=0}^{n-1} \log( \mu_k(\l) \Phi_k(\l) \one)$ with uniform error bounds given in \eqref{eq:qlmgfUbound} it follows that
\begin{equation}\label{eq:almgfUbound}
 \left| \frac{1}{n} \log \E^\pi\left[ e^{\l T_n} \ind{T_n < \infty} \right]  - \frac{1}{n} \log E_\eta\left[ \exp\left\{ \sum_{k=0}^{n-1} \log( \mu_k(\l) \Phi_k(\l) \one) \right\} \right] \right| \leq \frac{2}{(1-c_\l^4)c_\l^{10} n}.
\end{equation}
Moreover, recalling the definition of the empirical process $L_n$ in \eqref{eq:Lndef}, we can re-write the sum inside the second expectation on the left as
\begin{equation}\label{eq:Lnform}
 \sum_{k=0}^{n-1} \log( \mu_k(\l) \Phi_k(\l) \one)
= n \int_{\Omega} \log( \mu_0(\l) \Phi_0(\l) \one ) \, L_n(d\w).
\end{equation}
Recall that $L_n$ satisfies a large deviation principle on $M_1(\Omega_\k)$ with rate function $h(\cdot|\eta)$. Then Lemma \ref{lem:acont} allows us to apply a version of Varadhan's Lemma  (Lemma 4.3.6 in \cite{dzLDTA}) to conclude that for any $\l < \lcrit(\eta)$
\begin{equation}\label{eq:Vlemapp}
 \limsup_{n\ra\infty} \frac{1}{n} \log E_\eta\left[ \exp \left\{ n \int_{\Omega} \log( \mu_0(\l) \Phi_0(\l) \one ) \, L_n(d\w) \right\} \right] 
\leq \sup_{\a \in M_1^s(\Omega_\k)} \left\{ \L_\a(\l) - h(\a|\eta) \right\}. 
\end{equation}
Combining \eqref{eq:almgfUbound}, \eqref{eq:Lnform} and \eqref{eq:Vlemapp}  and recalling the definition of $\mathbf{\L}_\eta(\l)$ in \eqref{eq:arflt} finishes the proof of the Lemma. 
\end{proof}

We are now ready to prove the large deviation upper bound for Theorem \ref{th:aldpTn}. Since the rate function $\mathbb{J}_\eta(t)$ is convex and we are only proving a weak large deviation principle, it is enough to prove the large deviation upper bound for the left and right tails of the hitting times. 
To this end, note that 
$\P^\pi( T_n \leq nt ) \leq e^{-\l nt} \E^\pi [ e^{\l T_n} \ind{T_n < \infty} ]$ for any $\l < 0$. 
Thus, Lemma \ref{lem:almgf} and Corollary \ref{cor:arfinf} imply that 
\[
 \lim_{n\ra\infty} \frac{1}{n} \log \P^\pi( T_n \leq nt ) \leq - \sup_{\l < 0} \left\{ \l t - \mathbf{\L}_\eta(\l) \right\} = - \inf_{s\leq t} \mathbb{J}_\eta(s). 
\]
The large deviation upper bound for the right tails is proved similarly. In particular, since $\P^\pi( T_n \in [nt, \infty) ) \leq e^{-\l nt} \E^\pi [ e^{\l T_n} \ind{T_n < \infty} ]$ for any $\l \geq 0$, then Lemma \ref{lem:almgf} and Corollary \ref{cor:arfinf} imply that
\[
 \limsup_{n\ra\infty} \frac{1}{n} \log \P^\pi( T_n \in [nt, \infty) ) \leq - \sup_{\l \in [0,\lcrit(\eta))} \left\{ \l t - \mathbf{\L}_\eta(\l) \right\} = - \inf_{s\geq t} \mathbb{J}_\eta(s). 
\]

\subsection{Lower bound}

To prove the averaged large deviation lower bound for hitting times it will be enough to show that 
\[
 \lim_{\d\ra 0} \liminf_{n\ra\infty} \frac{1}{n} \log \P^\pi( |T_n - nt| < n\d ) = - \mathbb{J}_\eta(t), \quad \forall t > 1. 
\]
To this end, recall the definition of $ Q_{\w,n}^{\l,M} $ from \eqref{QwnlMdef} in the proof of the quenched large deviation lower bound for hitting times. $Q_{\w,n}^{\l,M}$ is a distribution on paths of the random walk up to time $T_n$ that depends on the environment $\w$ and so for any ergodic measure $\a$ on environments we may define the corresponding averaged measure $\mathbb{Q}_{\a,n}^{\l,M}( \cdot) = E_\a\left[ Q_{\w,n}^{\l,M}( \cdot) \right]$. 
Now, let $\mathcal{F}_{n,M} := \s( \{\tau_k\}_{k=1}^n, \, \{ \w_x \}_{x=-M+1}^{n-1} )$ and $ \mathcal{F}_{n,M}^\w := \s( \{ \w_x \}_{x=-M+1}^{n-1} )$. Then it is easy to see that 
\begin{equation}\label{eq:entropysum}
 H( \mathbb{Q}_{\a,n}^{\l,M} | \P^\pi ) \bigr|_{\mathcal{F}_{n,M}} = 
H(\a|\eta) \bigr|_{\mathcal{F}_{n,M}^\w} + \int_\Omega H( Q_{\w,n}^{\l,M} | P_\w^\pi ) \bigr|_{\mathcal{F}_{n,M}} \, \a(d\w), 
\end{equation}
where in the above $H(\cdot|\cdot)$ is the relative entropy function. 
The definition of the measure $Q_{\w,n}^{\l,M}$ implies that 
\[
 H( Q_{\w,n}^{\l,M} | P_\w^\pi )\bigr|_{\mathcal{F}_{n,M}}  
=  E_{Q_{\w,n}^{\l,M}}\left[ \l T_n \right] - \log E_\w^\pi[ e^{\l T_n} \ind{\tau_k \leq M, \, k=1,2,\ldots n} ]. 
\]
Then as in the proof of the quenched large deviation lower bound, choosing $\l_{t,M}$ as in Lemma \ref{ltMlem} and then applying \eqref{eq:QwnlMLLN} and Lemma \ref{tqlmgflim} we obtain that 
\[
 \lim_{n\ra\infty} \frac{1}{n} \int_\Omega H( Q_{\w,n}^{\l_{t,M},M} | P_\w^\pi ) \bigr|_{\mathcal{F}_{n,M}} \, \a(d\w) = \l_{t,M} t - \L_{\a,M}(\l_{t,M}) = J_{\a,M}(t). 
\]
Note that in taking this limit we used the fact that the measure $Q_{\w,n}^{\l,M}$ is constructed so that $Q_{\w,n}^{\l,M}( T_n/n \leq M) = 1$. 
Then, since $\limsup_{n\ra\infty} n^{-1} H(\a|\eta) \bigr|_{\mathcal{F}_{n,M}^\w} \leq h(\a|\eta)$ we conclude from \eqref{eq:entropysum} that 
\[
 \lim_{n\ra\infty} \frac{1}{n}  H( \mathbb{Q}_{\a,n}^{\l_{t,M},M} | \P^\pi ) \bigr|_{\mathcal{F}_{n,M}} = J_{\a,M}(t) + h(\a|\eta). 
\]
Finally, since \eqref{eq:QwnlMLLN} implies that $\lim_{n\ra\infty} Q_{\w,n}^{\l_{t,M},M}( |T_n - t n | < \d n ) = 1$ for any $\d>0$, it follows from \cite[Lemma 7]{cgzLDP} that 
\[
 \liminf_{n\ra\infty} \frac{1}{n} \log \P^\pi( |T_n - t n| < \d n) \geq - \left\{ J_{\a,M}(t) + h(\a|\eta) \right\}. 
\]
This inequality holds for any $\d>0$, $M<\infty$ and $\a \in M_1^e(\Omega_\k)$. Thus we conclude that
\begin{align*}
 \lim_{\d \ra 0} \liminf_{n\ra\infty} \frac{1}{n} \log \P^\pi( |T_n - t n| < \d n) 
&\geq - \inf_{\a \in M_1^e(\Omega_\k)} \lim_{M\ra\infty} \left\{ J_{\a,M}(t) + h(\a|\eta) \right\} \\
& = - \inf_{\a \in M_1^e(\Omega_\k)} \left\{ J_\a(t) + h(\a,\eta) \right\} \\
& = - \mathbb{J}_\eta(t), 
\end{align*}
where the second equality follows from \eqref{JMlim}. This completes the averaged large deviations lower bound for hitting times.

\section{Transferring a LDP from \texorpdfstring{$T_n/n$ to $X_n/n$}{time to space}}\label{sec:ldpXn}

Having proved the quenched and averaged large deviation principles for the hitting times, we now use these to prove quenched and averaged large deviation principles for the speed $X_n/n$ of the random walk. 
We begin by defining what will be the quenched and averaged rate functions for the speed. 
\begin{equation}\label{eq:srfdef}
 I_\eta(x) = 
\begin{cases}
 x J_\eta(1/x) & x > 0 \\
 \lcrit(\eta) & x = 0 \\
|x| J_\eta(1/|x|) & x<0 
\end{cases}
\quad\text{and}\quad
 \mathbb{I}_\eta(x) = 
\begin{cases}
 x \mathbb{J}_\eta(1/x) & x > 0 \\
 \lcrit(\eta) & x = 0 \\
|x| \mathbb{J}_\eta(1/|x|) & x<0. 
\end{cases}
\end{equation}

\begin{lem}\label{lem:qsrf}
 If the distribution $\eta$ on environments satisfies Assumptions \ref{asmerg} and \ref{easm}, then the function $I_\eta$ as defined in \eqref{eq:srfdef} is continuous and convex on $[-1,1]$. Moreover, if $\lcrit > 0$ then $I_\eta(x) = 0$ if and only if $x = \vp$ while if $\lcrit = 0$ then $I_\eta(x) = 0$ if and only if $x$ is in the closed interval between $0$ and $\vp$. 
\end{lem}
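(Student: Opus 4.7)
The approach is to use the Legendre dual representation $J_\eta(t) = \sup_\l \{\l t - \L_\eta(\l)\}$ established in Lemma \ref{Jetaprop} to rewrite $I_\eta$ on each side of the origin as a supremum of affine functions of $x$. Concretely, for $x > 0$,
\begin{equation*}
x J_\eta(1/x) = \sup_{\l\leq \lcrit(\eta)}\{\l - x \L_\eta(\l)\},
\end{equation*}
which is manifestly convex and lower semicontinuous in $x$ on $(0,1]$, and continuity on $(0,1]$ is inherited from the continuity of $J_\eta$ on $[1,\infty)$ proved in Lemma \ref{Jetaprop}. Applying the identical argument to $\eta^{\text{Inv}}$ (I interpret the $x<0$ branch of the definition as $|x| J_{\eta^{\text{Inv}}}(1/|x|)$, matching Theorem \ref{th:qldpXn}) handles $x \in [-1,0)$, so each of the two open half-intervals carries a continuous convex function.

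For the behavior at $x=0$ I would use the explicit piecewise description of $J_\eta$ in Lemma \ref{Jetaprop}, which gives $J_\eta(t) = \lcrit(\eta) t - \L_\eta(\lcrit(\eta))$ for all $t \geq \tcrit(\eta)$. Hence for all sufficiently small $x>0$ with $1/x \geq \tcrit(\eta)$,
\begin{equation*}
I_\eta(x) = \lcrit(\eta) - x\, \L_\eta(\lcrit(\eta)),
\end{equation*}
a linear function of $x$ with right limit $\lcrit(\eta)$ at $0$ and right derivative $-\L_\eta(\lcrit(\eta))$. The symmetric computation shows that the left limit at $0$ equals $\lcrit(\eta^{\text{Inv}})$ and the left derivative equals $\L_{\eta^{\text{Inv}}}(\lcrit(\eta^{\text{Inv}}))$. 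Continuity at $0$ therefore reduces to the identity $\lcrit(\eta) = \lcrit(\eta^{\text{Inv}})$ (which is precisely what the remark following Theorem \ref{th:qldpXn} flags), and convexity across $0$ then reduces to the slope inequality $\L_{\eta^{\text{Inv}}}(\lcrit) \leq -\L_\eta(\lcrit)$, i.e.\ $\L_\eta(\lcrit) + \L_{\eta^{\text{Inv}}}(\lcrit) \leq 0$. I would obtain both facts simultaneously by interpreting $-\L_\eta(\lcrit(\eta))$ and $-\L_{\eta^{\text{Inv}}}(\lcrit(\eta^{\text{Inv}}))$ as the exponential decay rates of the quenched probabilities that the walk remains in $(-\infty,0]$, respectively $[0,\infty)$, for a long time; the reflection $\w \mapsto \w^{\text{Inv}}$ exchanges these two events, while their product is bounded above by the probability that the walk never leaves level $0$ (plus an ellipticity correction), giving the slope inequality as well. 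This identification of $\lcrit(\eta)$ with $\lcrit(\eta^{\text{Inv}})$ is the principal obstacle in the proof.

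Finally, the characterization of the zero set follows by reading off the corresponding information from Lemma \ref{qrfpropTn}. If $\lcrit(\eta) > 0$, then $I_\eta(0) = \lcrit(\eta) > 0$; on the side on which the walk is transient (either $\vp > 0$ under $\eta$ or $\vp < 0$, equivalently transience to the right under $\eta^{\text{Inv}}$), part (2)(c) of Lemma \ref{qrfpropTn} gives a unique zero of $J_\eta$ (or $J_{\eta^{\text{Inv}}}$) at $1/|\vp|$, producing the unique zero of $I_\eta$ at $x = \vp$; on the opposite side the walk is transient in the wrong direction, so part (1) gives $\inf J > 0$ and $I_\eta$ stays strictly positive. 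If $\lcrit(\eta) = 0$ then $I_\eta(0) = 0$, and distinguishing the subcases $\vp > 0$, $\vp = 0$, and $\vp < 0$ via parts (2)(a)--(b) of Lemma \ref{qrfpropTn} shows that $\{I_\eta = 0\}$ is exactly the closed interval between $0$ and $\vp$.
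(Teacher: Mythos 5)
Your reduction of the lemma to the two facts $\lcrit(\eta)=\lcrit(\eta^{\text{Inv}})$ and $\L_\eta(\lcrit)+\L_{\eta^{\text{Inv}}}(\lcrit)\leq 0$ is exactly the paper's reduction, and the zero-set bookkeeping via Lemma~\ref{qrfpropTn} is also the intended route (with the small caveat that for convexity at the origin you should work with the inequality $I_\eta(x)+I_\eta(-x)\geq 2\lcrit$ directly — your one-sided-derivative version implicitly assumes $\tcrit(\eta)<\infty$, whereas the inequality form only uses $J_\eta(t)\geq\lcrit t-\L_\eta(\lcrit)$ and works unconditionally). The genuine gap is the last step: your claim that $-\L_\eta(\lcrit)$ and $-\L_{\eta^{\text{Inv}}}(\lcrit)$ are the exponential decay rates of the quenched probabilities of remaining in $(-\infty,0]$ and $[0,\infty)$, and that "their product is bounded above by the probability of staying at level $0$," is not substantiated and does not correspond to any identity established in the paper. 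The known probabilistic interpretation of $\lcrit(\eta)$ itself is the slowdown decay rate (Lemma~\ref{I0ub} gives $I_\eta(0)=\lcrit$ as the rate of $P_\w(\inf_{m\geq n}X_m\leq 0)$), but $-\L_\eta(\lcrit)$ is not a slowdown rate, and $P_\w(A)P_\w(B)\leq P_\w(A\cap B)$ is false in general, so the claimed bound needs a real argument.

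The paper instead proves both facts by direct manipulation of the moment generating functions. For $\lcrit(\eta)=\lcrit(\eta^{\text{Inv}})$ it inserts the event $\{T_{-1}<T_n\}$ into $E_\w^{(0,i)}[e^{\l T_n}\ind{T_n<\infty}]$ and uses $\Phi_{-1}(\l)(j,i)\geq c_\l$ to cancel, giving the uniform bound $E_\w^{(0,i)}[e^{\l T_{-1}}\ind{T_{-1}<\infty}]\leq 1/c_\l$ for all $\l\leq\lcrit(\eta)$; by the reflection $\w\mapsto\w^{\text{Inv}}$ and Lemma~\ref{lcritlem} this yields $\lcrit(\eta)\leq\lcrit(\eta^{\text{Inv}})$, and symmetry gives equality. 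For the slope inequality it starts from $1\geq E_\w^{(0,i)}[e^{\l T_{-n}}\ind{T_n<T_{-n}<\infty}]$, decomposes this as a product of the truncated forward matrix product $\Phi_0^{(n)}(\l)\cdots\Phi_{n-1}^{(n)}(\l)$ and the backward product $\bar\Phi_n(\l)\cdots\bar\Phi_1(\l)$, and lets $n\to\infty$ using Lemmas~\ref{qlmgflim} and \ref{tqlmgflim} to extract $\L_\eta(\l)$ and $\L_{\eta^{\text{Inv}}}(\l)$, giving $\L_\eta(\l)+\L_{\eta^{\text{Inv}}}(\l)\leq 0$ for all $\l\leq\lcrit$. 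Your plan would need to be completed along these lines; as written, the heart of the lemma is left unproved.
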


\begin{lem}\label{lem:asrf}
  If the distribution $\eta$ on environments satisfies Assumptions \ref{easm}-\ref{asmlocal} then the function $\mathbb{I}_\eta$ as defined in \eqref{eq:srfdef} is continuous and convex on $[-1,0]$ and $[0,1]$ separately. 
Moreover, the averaged rate function has the same zero set as the quenched rate function: $\mathbb{I}_\eta(x) = 0 \iff I_\eta(x) = 0$. 
\end{lem}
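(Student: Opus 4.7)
The plan is to reduce everything to the already-established properties of $\mathbb{J}_\eta$ (Corollary \ref{cor:arfinf} and Lemma \ref{arfpropTn}), following exactly the template used for the quenched case in Lemma \ref{lem:qsrf}.

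First I would verify that $\mathbb{J}_\eta$ is finite and continuous on $[1,\infty)$. Finiteness follows from $\mathbb{J}_\eta(t)\le J_\eta(t)<\infty$ (Lemma \ref{Jetaprop}), and convexity (Corollary \ref{cor:arfinf}) gives continuity on $(1,\infty)$. At the left endpoint $t=1$, for any sequence $t_n\to 1^+$ I can write $t_n=\beta_n\cdot 1+(1-\beta_n)\cdot 2$ with $\beta_n\to 1$; convexity yields $\limsup_n\mathbb{J}_\eta(t_n)\le \mathbb{J}_\eta(1)$, while lower semicontinuity (inherent to any rate function, and stated in Theorem \ref{th:aldpTn}) provides the matching liminf bound. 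The identical argument applies to $\mathbb{J}_{\eta^{\text{Inv}}}$, since $\eta^{\text{Inv}}$ inherits Assumptions \ref{easm}--\ref{asmlocal} from $\eta$.

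Next, for convexity of $\mathbb{I}_\eta$ on $(0,1]$ I would run the standard perspective-function computation. For $0<x_1<x_2\le 1$, $\theta\in[0,1]$, set $x_\theta:=\theta x_1+(1-\theta)x_2$ and $\alpha:=\theta x_1/x_\theta$, so that $1/x_\theta=\alpha/x_1+(1-\alpha)/x_2$. Convexity of $\mathbb{J}_\eta$ then gives
\[
\mathbb{I}_\eta(x_\theta)=x_\theta\mathbb{J}_\eta(1/x_\theta)\le x_\theta\bigl(\alpha\mathbb{J}_\eta(1/x_1)+(1-\alpha)\mathbb{J}_\eta(1/x_2)\bigr)=\theta\mathbb{I}_\eta(x_1)+(1-\theta)\mathbb{I}_\eta(x_2).
\]
Continuity on $(0,1]$ is then immediate from the continuity of $\mathbb{J}_\eta$ on $[1,\infty)$ via the composition $\mathbb{I}_\eta(x)=x\mathbb{J}_\eta(1/x)$. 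The treatment of $[-1,0)$ is verbatim after replacing $\mathbb{J}_\eta$ by $\mathbb{J}_{\eta^{\text{Inv}}}$.

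Finally, for the zero-set identification: for $x>0$,
\[
\mathbb{I}_\eta(x)=0 \iff \mathbb{J}_\eta(1/x)=0 \iff J_\eta(1/x)=0 \iff I_\eta(x)=0,
\]
the middle equivalence being precisely Lemma \ref{arfpropTn}; the $x<0$ case is symmetric via $\eta^{\text{Inv}}$. At $x=0$ both $\mathbb{I}_\eta(0)$ and $I_\eta(0)$ are defined to equal $\lcrit(\eta)$, so the equivalence is automatic. The whole argument is essentially mechanical; the only technical point is the continuity of $\mathbb{J}_\eta$ at $t=1$, and that reduces to combining LSC with a one-line convexity estimate, so no substantial obstacle is anticipated.
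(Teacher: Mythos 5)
Your proposal is correct for the lemma as literally stated and it uses essentially the same ingredients as the paper: convexity of $\mathbb{I}_\eta$ on $(0,1]$ and $[-1,0)$ via the perspective transform $x\mapsto x\,\mathbb{J}_\eta(1/x)$ applied to the convexity of $\mathbb{J}_\eta$ and $\mathbb{J}_{\eta^{\text{Inv}}}$ from Corollary~\ref{cor:arfinf}, and the zero-set identification via Lemma~\ref{arfpropTn}. You actually supply a bit more detail than the paper (the explicit perspective computation and the combination of convexity with lower semicontinuity to get continuity of $\mathbb{J}_\eta$ at the endpoint $t=1$), which is welcome.

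There is, however, one substantive piece of the paper's proof that your proposal omits. Although the lemma statement only asserts continuity on $[-1,0)$ and $(0,1]$ separately, the paper's proof of this lemma also establishes the continuity of $\mathbb{I}_\eta$ at $x=0$, by combining two facts: since $\mathbb{J}_\eta$ is the Legendre transform of $\mathbf{\L}_\eta$ (Lemma~\ref{lem:arflt}) and $\mathbf{\L}_\eta(\l)<\infty$ precisely for $\l\le\lcrit(\eta)$, one gets $\lim_{t\to\infty}\mathbb{J}_\eta(t)/t=\lcrit(\eta)$, and moreover $\lcrit(\eta)=\lcrit(\eta^{\text{Inv}})$ (proved in Lemma~\ref{lem:qsrf}). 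Together these yield $\lim_{x\to 0^\pm}\mathbb{I}_\eta(x)=\lcrit(\eta)=\mathbb{I}_\eta(0)$. This continuity at $0$ is not decorative: it is used in the proof of Theorem~\ref{th:aldpXn} to extend the averaged lower bound \eqref{alb} to $x=0$. Your write-up should either prove it here (following the paper) or make explicit that it is being deferred; as written, an essential step for the later argument would be missing.
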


\begin{rem}
Since $\mathbb{I}_\eta(x)$ is non-negative, it follows that $\mathbb{I}_\eta(x)$ is convex on all of $[-1,1]$ if $\lcrit(\eta) = \mathbb{I}_\eta(0) = 0$. 
We suspect that $\mathbb{I}_\eta(x)$ is a convex function even when $\lcrit(\eta) > 0$, but are currently unable to prove this with our techniques. Nonetheless, convexity of the rate function is not needed to prove the averaged large deviation principle for $X_n/n$. It is likely that the techniques of Varadhan \cite{vLDP} which were later generalized by Rassoul-Agha \cite{rLDPmixing} can be used to give another proof of the averaged large deviation principle for RWRE on the strip and show that indeed the rate function is convex. 
\end{rem}

\begin{proof}[Proof of Lemma \ref{lem:qsrf}]
 If $f(x)$ is a convex function, then $x\mapsto x f(1/x)$ is also convex. Thus, the definition if $I_\eta$ implies that $I_\eta$ is continuous and convex on $[-1,0)$ and $(0,1]$ separately. We still need to show that $I_\eta$ is continuous and convex at the origin. The continuity at the origin will follow from the following two facts. 
\begin{equation}\label{eq:I0cont}
 \lim_{t\ra\infty} J_\eta(t)/t = \lcrit(\eta) \quad\text{and} \quad \lcrit(\eta) = \lcrit(\eta^{\text{Inv}}).
\end{equation}
The first assertion in $\eqref{eq:I0cont}$ follows easily from the fact that $J_\eta(t)$ is the Legendre transform of the convex function $\L_\eta(\l)$ and $\L_\eta(\l) < \infty$ if and only if $\l \leq \lcrit(\eta)$. 
To prove the second assertion in \eqref{eq:I0cont}, define the matrices $\bar{\Phi}_n(\l)$ by
\[
 \bar\Phi_n(\l)(i,j) = E_\w^{(n,i)}\left[ e^{\l T_{n-1}} \ind{T_{n-1} < \infty, \, Y_{T_{n-1}} = j} \right].
\]
Note that $\bar\Phi_n(\l)(\w)$ is $\Phi_{-n}(\l)(\w^{\text{Inv}})$
and thus Lemma \ref{lcritlem} (see also Remark \ref{lcritrem} and Lemma \ref{PhiUboundlem}) implies that
\begin{equation}\label{eq:lcritInv}
 E_\w^{(0,i)}\left[ e^{\l T_{-1}} \ind{T_{-1} < \infty} \right] < \infty  \quad \iff \quad \l \leq \lcrit(\eta^{\text{Inv}}). 
\end{equation}
Now, for any $n \geq 1$,
\begin{align*}
E_\w^{(0,i)}\left[ e^{\l T_n} \ind{ T_n < \infty} \right] 
&\geq E_\w^{(0,i)}\left[ e^{\l T_n} \ind{T_{-1} < T_n < \infty} \right] \\
&\geq \sum_j E_\w^{(0,i)}\left[ e^{\l T_{-1}} \ind{T_{-1} < T_n, \, Y_{T_{-1}} = j } \right] \Phi_{-1}(\l)(j,i) E_\w^{(0,i)}\left[ e^{\l T_n} \ind{T_n < \infty} \right].
\end{align*}
If $\l \leq \lcrit(\eta)$, then Lemma \ref{PhiUboundlem} implies that $E_\w^{(0,i)}[ e^{\l T_n} \ind{T_n < \infty} ] < \infty$ and $\Phi_{-1}(\l)(j,i) \geq c_\l$ for all $j \in [d]$. 
Thus, we obtain that 
\[
E_\w^{(0,i)}\left[ e^{\l T_{-1}} \ind{T_{-1} < T_n } \right] \leq 1/c_\l \, \quad \forall n \geq 1, \, i \in [d], \, \l \leq \lcrit(\eta).
\]
The monotone convergence theorem then implies that
$E_\w^{(0,i)}[ e^{\l T_{-1}} \ind{T_{-1}<\infty} ] \leq 1/c_\l$
for any $i\in [d]$ and $\l \leq \lcrit(\eta)$. 
Applying this to \eqref{eq:lcritInv} we obtain that $\lcrit(\eta) \leq \lcrit(\eta^{\text{Inv}})$. 
The reverse inequality follows from a symmetric argument.

To show that $I_\eta(x)$ is convex at $x=0$
it is enough to show that $I_\eta(-x) + I_\eta(x) \geq 2 I_\eta(0) = 2 \lcrit$ for any $x>0$. However, since $J_\eta(t) \geq \lcrit t - \L_\eta(\lcrit)$ for any $t$ we have that
\[
 I_\eta(-x) + I_\eta(x) = x J_{\eta^{\text{Inv}}}(1/x) + x J_\eta(1/x) 
\geq 2 \lcrit - x\left\{ \L_{\eta^{\text{Inv}}}(\lcrit ) + \L_{\eta}(\lcrit) \right\}. 
\]
Therefore, convexity at the origin will follow if we can show that $\L_{\eta^{\text{Inv}}}(\lcrit ) + \L_{\eta}(\lcrit) \leq 0$. 
To see this, first note that for any $n\geq 1$ and $\l \leq \lcrit$, 
\begin{align*}
& E_\w^{(0,i)} \left[ e^{\l T_{-n}} \ind{T_n < T_{-n} < \infty} \right] \\
&= \sum_{j,l \in [d]} E_\w^{(0,i)} \left[ e^{\l T_n} \ind{T_n < T_{-n}, \, Y_{T_n} = j} \right] E_\w^{(n,j)}\left[ e^{\l T_0} \ind{T_0 < \infty, \, Y_{T_0} = l }\right] E_\w^{(0,l)} \left[ e^{\l T_{-n}} \ind{T_{-n} < \infty} \right]. 
\end{align*}
Choosing $i = i_n(\w)$ that minimizes $E_\w^{(0,i)} \left[ e^{\l T_{-n}} \ind{T_{-n} < \infty} \right]$ we obtain that 
\begin{align}
 1 
& \geq  \sum_{j,l \in [d]} E_\w^{(0,i_n)} \left[ e^{\l T_n} \ind{T_n < T_{-n}, \, Y_{T_n} = j} \right] E_\w^{(n,j)}\left[ e^{\l T_0} \ind{T_0 < \infty, \, Y_{T_0} = l }\right] \label{prematrixub}
\end{align}
For any $k > -n$ and $\l \leq \lcrit$, define the matrices $\Phi_k^{(n)}(\l)$ by 
\[
 \Phi_k^{(n)}(\l)(i,j) = E_\w^{(k,i)}\left[ e^{\l T_{k+1}} \ind{T_{k+1} < T_{-n}, \, Y_{T_{k+1}} = j} \right]. 
\]
With this notation, 
taking logarithms in \eqref{prematrixub} and dividing by $n$ we can write
\begin{align}
 0 & \geq \frac{1}{n} \log \left( e_{i_n} \left( \Phi_0^{(n)}(\l) \cdots \Phi_{n-1}^{(n)}(\l) \right) \left( \bar{\Phi}_n(\l) \cdots \bar{\Phi}_1(\l) \right) \one \right) \nonumber \\
&= \frac{1}{n} \log \left( e_{i_n}  \Phi_0^{(n)}(\l) \cdots \Phi_{n-1}^{(n)}(\l)  \one \right) + \frac{1}{n} \log\left( \pi_n \bar{\Phi}_n(\l) \cdots \bar{\Phi}_1(\l) \one \right), \label{eq:logsum}
\end{align}
where
\[
 \pi_n =   \frac{e_{i_n} \Phi_0^{(n)}(\l) \cdots \Phi_{n-1}^{(n)}(\l)}{e_{i_n} \Phi_0^{(n)}(\l) \cdots \Phi_{n-1}^{(n)}(\l)  \one}.
\]

Now, recall the definition of the matrices $\Phi_{n,M}(\l)$  from \eqref{PhikMdef} and note that $\Phi_{k,M}(\l)(i,j) \leq \Phi_k^{(n)}(\l)(i,j) \leq \Phi_k(\l)(i,j)$ when $M \leq k+n$. Then Lemmas \ref{qlmgflim} and \ref{tqlmgflim} imply that 
\begin{equation}
 \lim_{n\ra\infty} \frac{1}{n} \log \left( e_{i_n}  \Phi_0^{(n)}(\l) \cdots \Phi_{n-1}^{(n)}(\l)  \one \right) = \L_\eta(\l), \quad \eta\text{ - a.s.} \label{condqlmgflim}
\end{equation}
Also, as in the proof of Lemma \ref{qlmgflim} we can show that 
\[
 \sup_{\pi} \left|  \frac{1}{n} \log\left( \pi \bar{\Phi}_n(\l) \cdots \bar{\Phi}_1(\l) \one \right) - \frac{1}{n} \sum_{k=1}^n \log( \bar{\mu}_k(\l) \bar{\Phi}_k(\l) \one ) \right| \leq \frac{2}{n(1-c_\l^4) c_\l^{10}} , 
\]
where $\bar{\mu}_k(\l)(\w) = \mu_{-k}(\l)(\w^{\text{Inv}})$. Thus, we can conclude that 
\begin{equation}\label{reverseqlmgflim}
 \lim_{n\ra\infty} \frac{1}{n} \log\left( \pi_n \bar{\Phi}_n(\l) \cdots \bar{\Phi}_1(\l) \one \right) = E_\eta\left[ \log( \bar{\mu}_0(\l) \bar{\Phi}_0(\l) \one ) \right] = \L_{\eta^{\text{Inv}}}(\l) , \quad \eta\text{ - a.s.}
\end{equation}
Applying \eqref{condqlmgflim} and \eqref{reverseqlmgflim} to \eqref{eq:logsum} implies that $\L_\eta(\l) + \L_{\eta^{\text{Inv}}}(\l)$ for any $\l \leq \lcrit$ which, as noted above, shows that $I_\eta$ is convex at $x=0$. 

Finally, we note that the claimed properties of the zero set of $I_\eta$ follow from the corresponding properties for the zero sets of $J_\eta$ and $J_{\eta^{\text{Inv}}}$ which can be deduced from Lemma \ref{qrfpropTn}. 
\end{proof}

\begin{proof}[Proof of Theorem \ref{lem:asrf}]
As with the quenched case, convexity on $[-1,0)$ and $(0,1]$ separately follows from the convexity of $\mathbb{J}_\eta(t)$ and $\mathbb{J}_{\eta^{\text{Inv}}}(t)$. 
Since Lemma \ref{lem:arflt} shows that $\mathbb{J}_\eta(t)$ is the Legendre transform of $\mathbf{\L}_\eta(\l)$ and since $\mathbf{\L}_\eta(\l) < \infty$ if and only if $\l \leq \lcrit(\eta)$, it again follows that 
\[
 \lim_{t\ra\infty} \frac{\mathbb{J}_\eta(t)}{t} = \lcrit(\eta).
\]
Since the same is true with $\eta$ replaced by $\eta^{\text{Inv}}$ and since $\lcrit(\eta)= \lcrit(\eta^{\text{Inv}})$, this proves that $\mathbb{I}_\eta(x)$ continuous at $x=0$. 
The continuity at $x=0$ also allows us to extend the convexity of $\mathbb{I}_\eta(x)$ to the closed intervals $[-1,0]$ and $[0,1]$ separately. 
Finally, the fact that $\mathbb{I}_\eta(x)$ and $I_\eta(x)$ have the same zero sets follows from the corresponding property for $\mathbb{J}_\eta(t)$ and $J_\eta(t)$ in Lemma \ref{arfpropTn}. 
\end{proof}

In addition to the properties of $I_\eta$ and $\mathbb{I}_\eta$ that we proved above, the crucial tool in transferring the large deviations from $T_n/n$ to $X_n/n$ will be the following lemma which gives a uniform quenched large deviations upper bound for slowdowns. 
\begin{lem}\label{I0ub}
Assume the distribution $\eta$ satisfies Assumptions \ref{asmerg} and \ref{easm} and that the RWRE is recurrent or transient to the right.
Then,
\begin{equation}\label{I0ublim}
 \limsup_{n\ra\infty} \frac{1}{n} \log \left\{ \max_{i} \sup_{\w \in \Omega_\eta} P_\w^{(0,i)}\left( \inf_{m\geq n} X_m \leq 0 \right) \right\} \leq - I_\eta(0),
\end{equation}
where $\Omega_\eta \subset \Omega$ is the support of the distribution $\eta$.
\end{lem}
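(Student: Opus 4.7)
The plan is to reduce first to the case $\lcrit(\eta) > 0$, since $\lcrit(\eta) = 0$ gives $I_\eta(0) = 0$ and the bound is vacuous. When $\lcrit(\eta) > 0$, Lemma~\ref{qrfpropTn} forces the walk to be transient to the right with speed $\vp > 0$, and the proof of Lemma~\ref{lem:qsrf} yields $\lcrit(\eta^{\text{Inv}}) = \lcrit(\eta) =: \lcrit$. Before estimating, I would extend the uniform entry bound $\Phi_0(\l,\w)(i,j) \le 1/c_\l$ of Lemma~\ref{PhiUboundlem} from $\eta$-a.s.\ to every $\w\in\Omega_\eta$ via lower semicontinuity: since $\Phi_0(\l,\w)$ is the monotone increasing limit of the continuous functions $\Phi_{0,M}(\l,\w)$, a violation at some $\hat\w\in\Omega_\eta$ would, for large $M$, produce an open neighborhood of $\hat\w$ on which the violation holds, contradicting the $\eta$-a.s.\ bound in~\eqref{PhiUbound}. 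Applying the same argument to $\w^{\text{Inv}}\in\Omega_{\eta^{\text{Inv}}}$ yields uniform bounds on the reverse matrices $\bar\Phi_k(\l,\w) = \Phi_{-k}(\l,\w^{\text{Inv}})$ for $\l\le\lcrit$.

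The key inequality, based on whether the walk has reached level $K$ by time $n$ and the nearest-neighbor property of the first coordinate, is
\[
P_\w^{(0,i)}\!\Bigl(\inf_{m\ge n} X_m \le 0\Bigr) \le P_\w^{(0,i)}(T_K > n) + \max_{j\in[d]} P_\w^{(K,j)}(T_0 < \infty), \qquad K\ge 1.
\]
Chebyshev at $\l=\lcrit$ combined with the uniform entry bound gives $P_\w^{(0,i)}(T_K>n) \le e^{-\lcrit n}(d/c_{\lcrit})^K$, and the analogous Chebyshev applied to the reverse matrices (using $T_0\ge K$ by nearest-neighbor) gives $P_\w^{(K,j)}(T_0<\infty) \le e^{-\lcrit K}\|\bar\Phi_K(\lcrit)\cdots\bar\Phi_1(\lcrit)\|$.

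The main technical obstacle is that the crude bound $\|\bar\Phi_K(\lcrit)\cdots\bar\Phi_1(\lcrit)\| \le (d/c_{\lcrit})^K$ is too weak: with this alone, balancing the two terms via $K=\lfloor\e n\rfloor$ yields only the strictly sub-optimal rate $\lcrit^2/(\lcrit + \log(d/c_{\lcrit}))<\lcrit$. To get the sharp rate $\lcrit$ one must obtain a sub-exponential uniform-in-$\w$ bound $\|\bar\Phi_K(\lcrit)\cdots\bar\Phi_1(\lcrit)\| \le e^{o(K)}$ on $\Omega_\eta$. The key input I would use is the inequality $\L_\eta(\lcrit)+\L_{\eta^{\text{Inv}}}(\lcrit)\le 0$ proved in Lemma~\ref{lem:qsrf}, which forces $\L_{\eta^{\text{Inv}}}(\lcrit)\le 0$ and so the $\eta$-a.s.\ growth rate of the reverse matrix product, as furnished by Lemma~\ref{qlmgflim} applied to the inverted walk (together with the normalized-product approximations of Lemmas~\ref{mudeflem} and~\ref{nudeflem}), is non-positive. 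Extending this growth rate bound uniformly to $\Omega_\eta$ is delicate and is the heart of the matter: one uses that the reverse matrix product is lower semicontinuous in $\w$ (as an increasing limit of the continuous truncations) to transfer the $\eta$-a.s.\ upper bound to the support in the spirit of the first paragraph. With this sharper uniform bound, both terms contribute at rate $\lcrit$, and letting $K(n)\to\infty$ with $K(n)/n\to 0$ and then taking $\limsup_{n\to\infty}$ yields the claimed inequality $\limsup_{n\to\infty} n^{-1} \log \{\max_i \sup_{\w\in\Omega_\eta} P_\w^{(0,i)}(\inf_{m\ge n} X_m \le 0)\} \le -\lcrit = -I_\eta(0)$.
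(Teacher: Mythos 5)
Your approach is genuinely different from the paper's, but it contains a gap in the final balancing step that cannot be repaired within the two-term decomposition you use.

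Start with the final assertion: ``with this sharper uniform bound, both terms contribute at rate $\lcrit$, and letting $K(n)\to\infty$ with $K(n)/n\to 0$ ... yields the claimed inequality.'' This is where the argument fails. Your Chebyshev bound on the second term gives $\max_j P_\w^{(K,j)}(T_0<\infty) \le e^{-\lcrit K + o(K)}$, which is sharp in the variable $K$; but the quantity to control is $\frac{1}{n}\log$ of this expression, namely $-\lcrit \tfrac{K}{n} + \tfrac{o(K)}{n}$. When $K/n\to 0$ this tends to $0$, not to $-\lcrit$. Since the second term then dominates the sum, the overall $\limsup$ you obtain is $0$, not $-\lcrit$. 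To make the second term's rate reach $-\lcrit$ you need $K/n\to 1$; but then the first term, bounded by $e^{-\lcrit n}(d/c_{\lcrit})^{K}$, has rate $-\lcrit+\log(d/c_{\lcrit})>-\lcrit$. Even if you sharpen the first term using $E_\w^{(0,i)}[e^{\lcrit T_K}\ind{T_K<\infty}] \le e^{K\L_\eta(\lcrit)+o(K)}$, the resulting rate with $K=cn$ is $-\lcrit+c\,\L_\eta(\lcrit)$, and $\L_\eta(\lcrit)\ge \L_\eta(0)=0$ with no reason to be zero in general; so for $\L_\eta(\lcrit)>0$ no value of $c>0$ satisfies both constraints simultaneously. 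The inequality $\L_\eta(\lcrit)+\L_{\eta^{\mathrm{Inv}}}(\lcrit)\le 0$ that you correctly invoke only controls the \emph{difference} of the two growth rates, and is exactly enough to rule out the suboptimal balance you compute but not enough to push either term all the way down to $-\lcrit$.

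The underlying obstacle is that a one-shot decomposition by reaching level $K$ and then returning to level $0$ cannot see the event's self-similar structure. The paper's proof (following Dembo--Gantert--Zeitouni) splits $\{\sigma_n<\infty\}$ into a \emph{recursive} estimate: after the walk reaches level $\lceil un\rceil$ and returns to level $0$ within some sub-window, the event $\{\sigma_m<\infty\}$ for the remaining time $m$ appears again on the right-hand side, contributing a factor $\beta_{\lceil n(1-(k+\ell)/K)\rceil}(\w)$. This yields a self-referential inequality $A\le \max\{-I_\eta(-u),-I_\eta(u),-I_\eta(0),\,A-2u(I_\eta(0)+A)\}$ which, combined with the standing assumption $A+I_\eta(0)>0$, produces a contradiction and thereby forces the sharp bound. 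The step of first equating $\lim_n n^{-1}\log\alpha_n$ with the $\eta$-a.s.\ rate $B(\w)$ (via super-multiplicativity and ergodicity) also absorbs your support-extension subtlety, and at the same time handles the fact that $P_\w^{(0,i)}(T_K=\infty)$ need not vanish for every $\w\in\Omega_\eta$ -- a point your direct Chebyshev step quietly assumes away. Your preliminary reductions ($\lcrit=0$ trivial; the extension of Lemma~\ref{PhiUboundlem} to $\Omega_\eta$ by lower semicontinuity of the increasing limit $\Phi_{0,M}\nearrow\Phi_0$; the use of $\lcrit(\eta)=\lcrit(\eta^{\mathrm{Inv}})$) are all fine and do appear in the paper, but the heart of the estimate needs the recursive decomposition, not a sharper product bound.
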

\begin{rem}
A similar statement for RWRE on $\Z$ with holding times was proved in \cite[Lemma 4]{dgzLDPH}. The main difference in Lemma \ref{I0ub} is that the need to take the maximum over the initial starting height $i$ as well. 
The proof below is essentially an adaptation of the proof in \cite{dgzLDPH} but we present it here for completeness and to complete some minor gaps in the proof from \cite{dgzLDPH}. 
\end{rem}

\begin{proof}
For notational convenience, let $\s_n = \inf \{ m \geq n: X_m \leq 0 \}$ and
\[
 \b_n(\w) = \max_i P_\w^{(0,i)}( \s_n < \infty ), \quad\text{and}\quad \a_n = \sup_{\w \in \Omega_\eta} \b_n(\w).
\]
With this notation, the statement of the lemma is that $\limsup_{n\ra\infty} \frac{1}{n} \log \a_n \leq -I_\eta(0)$.
We will first show that 
\begin{equation}\label{threelimits}
 \lim_{n\ra\infty} \frac{1}{n} \log \a_n = \lim_{n\ra\infty} \frac{1}{n} \log \b_n(\w) = \lim_{n\ra\infty} \frac{1}{n} \log P_\w^{(0,i)}(\s_n < \infty), \quad \forall i \in [d], \quad \eta\text{ - a.s.}
\end{equation}
The proof will then be finished by deriving quenched large deviation estimates for $P_\w^{(0,i)}(\s_n < \infty)$. 

First of all, note that the uniform ellipticity in Assumption \ref{easm} implies that 
\[
 P_\w^{(0,i)} ( \s_n < \infty ) \geq \k^2 P_\w^{(0,j)}( \s_{n-2} < \infty ) \geq \k^2 P_\w^{(0,j)}( \s_n < \infty ), \quad \forall i,j \in [d]. 
\]
From this it is easy to see that $\b_{n+k}(\w) \geq \k^2 \b_n(\w) \b_k(\w)$ for all $n,k \geq 1$. 
This super-multipicative property, combined with the fact that $\b_n(\w) \geq \k^n$ for all $n\geq 1$, is enough to show that the limit  
\[
 B(\w) := \lim_{n\ra\infty} \frac{1}{n} \log \b_n(\w)
\]
exists, and $B(\w) \in [\log \k, 0]$. 
Moreover, since $\b_n(\w) \geq P_\w^{(0,i)}(\s_n < \infty) \geq \k^2 \b_n(\w)$ for any $i \in [d]$, the third limit in \eqref{threelimits} is also equal to $B(\w)$. 
To prove the first equality in \eqref{threelimits}, first note that it's obvious that $B(\w)\leq \liminf_{n\ra\infty} n^{-1} \log \a_n$. 
Let $A := \limsup_{n\ra\infty} n^{-1} \log \a_n$. 
Then for any fixed $k \geq 1$, it can be shown that there exists a $n_k \geq k$, $\w^{(k)} \in \Omega_\eta$ and $\ell_k \geq 1$ such that 
\[
 \frac{1}{n_k} \log \left\{ \max_i P_{\w^{(k)}}^{(0,i)} (\s_{n_k} < \ell_k ) \right\} > A - \frac{1}{k}. 
\]
Since for fixed $n,\ell<\infty$ the mapping $\w \mapsto \max_i P_\w^{(0,i)}( \s_n < \ell )$ is continuous on $\Omega_\eta$, we conclude that for every $k\geq 1$ there exists a relatively open subset $G_k \subset \Omega_\eta$ such that $\eta(\w \in G_k) > 0$ and
\[
 \frac{1}{n_k} \log \left\{ \max_i \sup_{\w \in G_k} P_{\w}^{(0,i)}( \s_{n_k} < \ell_k ) \right\} > A - \frac{1}{k}.
\]
Since $\eta(\w \in G_k) > 0$ and $\eta$ is an ergodic measure on environments, it follows that for any $k$ and $\eta$-a.e.\ environment $\w$ there exists a $d_k = d_k(\w) \leq 0$ such that $\theta^{-d_k} \w \in G_k$.
Then,
\begin{align*}
 \b_n(\w) 
&\geq \k^{d_k} P_{\th^{-d_k}\w}^{(0,j)}(\s_n < \infty) \\
&\geq \k^{d_k} \left\{ \k^2 \max_j P_{\th^{-d_k} \w }^{(0,j)}(\s_{n_k} < \ell_k ) \right\}^{\cl{n/n_k}}
\geq \k^{d_k} \left\{ \k^2 e^{n_k(A-\frac{1}{k})} \right\}^{\cl{n/n_k}}.
\end{align*}
Therefore, we can conclude for $\eta$-a.e.\ environment and any fixed $k$ that
\[
 B(\w) = \lim_{n\ra\infty} \frac{1}{n}\log \b_n(\w) \geq  \frac{2 \log \k}{n_k} + A - \frac{1}{k} .
\]
Taking $k\ra\infty$ we conclude that $B(\w) \geq A = \limsup_{n\ra\infty} n^{-1} \log \a_n$.

We have thus shown that the three limits in \eqref{threelimits} all equal the constant $A$. It remains to show that $A \leq -I_\eta(0)$. 
This is obviously true when $\lcrit(\eta) = I_\eta(0) = 0$, and so we need only to consider the case when $\lcrit(\eta)>0$. 
Recall that $t_0(\eta) = \L_\eta'(\l) < \infty$ when $\lcrit(\eta)>0$. Since $\lcrit(\eta) = \lcrit(\eta^{\text{Inv}}) > 0$ this implies that $t_0(\eta), t_0(\eta^{\text{Inv}}) < \infty$ and that $J_{\eta}(t)$ and $J_{\eta^{\text{Inv}}}(t)$ are non-decreasing on $[t_0(\eta),\infty)$ and $[t_0(\eta^{\text{Inv}}),\infty)$, respectively.
For convenience of notation let $P_\w^{(k,*)}(\cdot) = \max_j P_\w^{(k,j)}(\cdot)$. Then, for any $i \in [d]$, $K<\infty$ and 
$0<u < \frac{1}{t_0(\eta)} \wedge \frac{1}{t_0(\eta^{\text{Inv}})}$,
\begin{align}
& P_\w^{(0,i)}( \s_n < \infty) \nonumber \\
&\leq  P_\w^{(0,i)} \left( T_{\cl{nu}} \geq n \right) + P_\w^{(\cl{un},*)}(T_0 \in[n,\infty) ) \label{bsplit} \\
&\quad + \sum_{uK < k,\ell \leq K } P_\w^{(0,i)}\left( T_{\cl{nu}} \in \left[ \tfrac{(k-1)n}{K}, \tfrac{kn}{K} \right) \right) P_\w^{(\cl{nu},*)}\left( T_0 \in \left[ \tfrac{(\ell-1)n}{K}, \tfrac{\ell n}{K} \right) \right) \b_{\cl{n(1-\frac{k+\ell}{K}) }}(\w), \nonumber
\end{align}
where by convention we let $\b_{-m}(\w) = 1$ for any $m\geq 0$. 
Note that in the last sum above we have restricted $k,\ell > uK$ since otherwise the probabilities inside the sum are zero. 

Next we derive large deviation upper bounds for all of the terms on the right side of \eqref{bsplit}. 
For the first term on the right, Theorem \ref{QLDPTn} implies that 
\[
 \limsup_{n\ra\infty} \frac{1}{n} P_\w^{(0,i)} \left( T_{\cl{nu}} \geq n \right) \leq - u J_\eta\left( \frac{1}{u} \right) = - I_\eta\left( u \right),
\]
where in the first inequality we used that $1/u > t_0(\eta)$. 
Similarly, we claim that 
\begin{equation}\label{shiftqldp}
  \limsup_{n\ra\infty} \frac{1}{n} P_\w^{(\cl{nu},*)} \left( T_0 \in [n,\infty) \right) \leq - u J_{\eta^{\text{Inv}}}\left( \frac{1}{u} \right) = - I_\eta\left( -u \right). 
\end{equation}
Note that \eqref{shiftqldp} does not follow directly from Theorem \ref{QLDPTn} since the starting location of the random walk is changing with $n$. However, it can be shown that the proof of Theorem \ref{QLDPTn} still carries through in this case. Indeed, the key to the large deviation upper bound is the computation of the asymptotics of the quenched log moment generating function, and as was shown above in \eqref{reverseqlmgflim}, $\lim_{n\ra\infty} n^{-1} \log E_\w^{(n,j)}[ e^{\l T_0} \ind{T_0 < \infty} ] = \L_{\eta^{\text{Inv}}}(\l).$
From this the proof of \eqref{shiftqldp} is standard. 
Finally, in the same way it can be shown that for fixed $k,\ell$
\begin{align}
& \limsup_{n\ra\infty} \frac{1}{n} \log \left\{ P_\w^{(0,i)}\left( T_{\cl{nu}} \in \left[ \tfrac{(k-1)n}{K}, \tfrac{kn}{K} \right) \right) P_\w^{(\cl{nu},*)}\left( T_0 \in \left[ \tfrac{(\ell-1)n}{K}, \tfrac{\ell n}{K} \right) \right) \b_{\cl{n(1-\frac{k+\ell}{K}) }}(\w) \right\} \nonumber \\
&\leq - \left\{ \inf_{t \in [\frac{k-1}{Ku},\frac{k}{Ku}] } u J_{\eta}(t) + \inf_{t \in [\frac{\ell-1}{Ku},\frac{\ell}{Ku}] } u J_{\eta^{\text{Inv}}}(t) \right\} + \left( 1- \frac{k+\ell}{K} \right)_+ A \nonumber \\
&\leq - \left\{ u J_{\eta}\left( \frac{k}{Ku} \right) + u J_{\eta^{\text{Inv}}}\left( \frac{\ell}{Ku} \right) \right\} + \left( 1- \frac{k+\ell}{K} \right)_+ A  + u \Delta_{u,K}, \label{DuKerror}
\end{align}
where the error term $\Delta_{u,K}$ vanishes as $K\ra\infty$ for any fixed $u$ (this follows from the fact that $J_\eta$ and $J_{\eta^\text{Inv}}$ are uniformly continuous on $[1,1/u]$). 
For the terms inside the braces in \eqref{DuKerror} we have 
\[
 u J_{\eta}\left( \frac{k}{Ku} \right) + u J_{\eta^{\text{Inv}}}\left( \frac{\ell}{Ku} \right) = \frac{k}{K} I_\eta\left( \frac{Ku}{k} \right) + \frac{\ell}{K} I_\eta\left(\frac{-Ku}{\ell} \right) \geq \frac{k+\ell}{K} I_\eta(0),
\]
where the last inequality follows from the convexity of $I_\eta$. 

Since we are trying to show that $A\leq -I_\eta(0)$, we may assume for contradiction that $A + I_\eta(0) > 0$ in which case 
\begin{align*}
 \eqref{DuKerror} 
&\leq -  \frac{k+\ell}{K} I_\eta(0) + \left( 1- \frac{k+\ell}{K} \right)_+ A  + u \Delta_{u,K} \\
&\leq u \Delta_{u,K} +
\begin{cases}
 -I_\eta(0) & \text{if } k+\ell \geq K \\
 A - 2 u (I_\eta(0)+A) & \text{if } k,\ell > uK, \text{ and } k+\ell < K. 
\end{cases} 
\end{align*}
Combining all of the above large deviation estimates for the terms on the right side of \eqref{bsplit} 
and using the fact that $A = \lim_{n\ra\infty} n^{-1} \log P_\w^{(0,i)}( \s_n < \infty)$, we obtain that 
\[
A 
\leq \max\left\{ -I_\eta\left(-u \right), \, -I_\eta\left(u\right), \, -I_\eta(0) , \, A-2u(I_\eta(0)+A) \right\}  + u\Delta_{u,K}.
\]
Letting $K\ra\infty$ we get the same inequality without the last term since $\Delta_{u,K} \ra 0$ for $u$ fixed. 
Since we assumed for contradiction that $I_\eta(0) + A > 0$, the last term in the maximum is strictly less that $A$, and thus the maximum must be attained by one of the first three terms. Then taking $u\ra 0$
we conclude that $A \leq -I_\eta(0)$, 
contradicting our previous assumption that $A > -I_\eta(0)$. 
\end{proof}

\subsection{Proof of Theorem \ref{th:qldpXn}}

We are now ready to prove the quenched large deviation principle for $X_n/n$ as stated in Theorem \ref{th:qldpXn}. 
Note that by symmetry, we may assume that $\eta$ is such that the RWRE is recurrent or transient to the right so that $\vp \geq 0$. 
Since $I_\eta(x)$ is non-increasing on $[-1,\vp]$ and non-decreasing on $[\vp,1]$, to prove the large deviation upper bound it is enough to show that 
\begin{equation} \label{qubs1}
  \limsup_{n\ra\infty} \frac{1}{n} \log P_\w^\pi(X_n \geq xn) \leq - I_\eta(x), \quad \eta\text{ - a.s.} \quad \forall x \geq \vp, 
\end{equation}
and 
\begin{equation}\label{qubs2}
  \limsup_{n\ra\infty} \frac{1}{n} \log P_\w^\pi(X_n \leq xn) \leq - I_\eta(x), \quad \eta\text{ - a.s.} \quad \forall x \leq \vp, 
\end{equation}

To prove \eqref{qubs1}, note that Theorem \ref{QLDPTn} implies that  
\begin{align*}
 \limsup_{n\ra\infty} \frac{1}{n} \log P_\w^\pi(X_n \geq xn) 
&\leq \limsup_{n\ra\infty} \frac{1}{n} \log P_\w^\pi(T_{\cl{xn}} \leq n ) \nonumber \\
&\leq -x \inf_{t \leq 1/x} J_\eta(t) =  - x J_\eta(1/x) = -I_\eta(x), \quad \forall x \geq \vp, 
\end{align*}
where the second to last equality follows from the fact that $J_\eta$ is non-increasing on $(-\infty,t_0]$ and $t_0 = 1/\vp$ (see Lemmas \ref{Jetaprop} and \ref{lem:t0}).

To prove corresponding large deviation upper bounds for the left tails note that for any $x>0$ and a fixed $K<\infty$, by decomposing according to the hitting time $T_{\fl{xn}}$ we obtain
\begin{equation}\label{Xnqlefttail}
\begin{split}
 P_\w^{\pi}( X_n \leq xn )
&\leq P_\w^\pi( T_{\fl{xn}} \geq n ) \\
&\quad + \sum_{k=1}^K P_\w^\pi\left( T_{\fl{xn}} \in \left[ \frac{(k-1)n}{K}, \frac{kn}{K} \right)  \right)
\left\{ \max_i P_{\th^{\fl{nx}}\w}^{(0,i)} \left( \inf_{t \geq n(1-k/K)} X_t \leq 0 \right) \right\}.
\end{split}
\end{equation}
Lemma \ref{I0ub}, together with Theorem \ref{QLDPTn}, then implies that for $x \in (0,\vp]$,
\begin{equation}\label{qKdecomp1}
\begin{split}
 \limsup_{n\ra\infty} & \frac{1}{n} \log P_\w^{\pi}( X_n \leq xn ) \\
&\leq - \min\left\{ x \inf_{t \geq 1/x} J_\eta(t), \,  \min_{k\leq K}  \left\{ \inf_{t \in \left[\frac{k-1}{Kx}, \frac{k}{Kx}\right]} x J_\eta(t) + \left(1-\frac{k}{K}\right) I_\eta(0) \right\} \right\} \\
&=- \min\left\{ x J_\eta(1/x) ,  \, \min_{k\leq K}  \left\{ \inf_{s \in \left[\frac{k-1}{K}, \frac{k}{K}\right]} s I_\eta(x/s) + \left(1-s \right) I_\eta(0) - \left(\frac{k}{K} - s\right) I_\eta(0)\right\} \right\} \\
&\leq - \min\left\{ I_\eta(x) ,  \, \min_{k\leq K}  \left\{ \inf_{s \in \left[\frac{k-1}{K}, \frac{k}{K}\right]} I_\eta(x) - \left(\frac{k}{K} - s\right) I_\eta(0)\right\} \right\} \\
&= - I_\eta(x) + \frac{1}{K} I_\eta(0),
\end{split}
\end{equation}
where in the first equality we used that $\inf_{t \geq 1/x} J_\eta(t) = J_\eta(1/x)$ since $J_\eta(t)$ is non-decreasing on $[1/\vp,\infty)$, and in the second to last line we use the fact that $I_\eta(x)$ is convex in $x$. 
Finally, letting $K \ra\infty$ proves \eqref{qubs2} when $x\in(0,\vp]$. 
Similarly, if $x < 0$ then $\{X_n \leq xn\} \subset \{ T_{\fl{xn}} \leq n \}$ and by decomposing according to the hitting time $T_{\fl{xn}}$ we obtain
\begin{align*}
 P_\w^{\pi}( X_n \leq xn )
&\leq \sum_{k=1}^K P_\w^\pi\left( T_{\fl{xn}} \in \left( \frac{(k-1)n}{K}, \frac{kn}{K} \right]  \right)
\left\{ \max_i P_{\th^{\fl{nx}}\w}^{(0,i)} \left( \inf_{t \geq n(1-k/K)} X_t \leq 0 \right) \right\}. 
\end{align*}
From this, the quenched large deviation principle for $T_{-n}/n$ together with Lemma \ref{I0ub} implies that 
\begin{equation}\label{qKdecomp2}
 \begin{split}
 \limsup_{n\ra\infty} \frac{1}{n} \log P_\w^{\pi}( X_n \leq xn ) 
&\leq - \min_{k\leq K}  \left\{ \inf_{t \in \left[\frac{k-1}{K|x|}, \frac{k}{K|x|}\right]} |x| J_{\eta^{\text{Inv}}}(t) + \left(1-\frac{k}{K}\right) I_\eta(0) \right\} \\
&= - \min_{k\leq K}  \left\{ \inf_{s \in \left[\frac{k-1}{K}, \frac{k}{K}\right]} s I_\eta(x/s) + (1-s)I_\eta(0)-\left(\frac{k}{K}-s\right) I_\eta(0) \right\} \\
&\leq - \min_{k\leq K}  \left\{ \inf_{s \in \left[\frac{k-1}{K}, \frac{k}{K}\right]} I_\eta(x) - \left(\frac{k}{K} - s\right) I_\eta(0)\right\}  \\
&= - I_\eta(x) + \frac{1}{K} I_\eta(0). 
\end{split}
\end{equation}
Again, letting $K \ra 0$ proves \eqref{qubs2} for $x < 0$. Finally, since $ P_\w^\pi(X_n \leq 0) \leq P_\w^\pi( \inf_{m\geq n} X_m \leq 0 )$, Lemma \ref{I0ub} implies that \eqref{qubs2} holds for $x=0$ as well. 
This completes the proof of the large deviation upper bound in Theorem \ref{th:qldpXn}. 

For the large deviation lower bound, it is enough to show that
\begin{equation}\label{qlbs}
 \lim_{\d\ra 0} \liminf_{n\ra\infty} \frac{1}{n} \log P_\w^\pi( |X_n - xn| < \d n ) \geq -I_\eta(x), \quad \eta\text{ - a.s.,} \quad \forall x\in\R. 
\end{equation}
To show this when $x\neq 0$, since 
$|X_k-X_{k-1}| \leq 1$ for all $k\geq 1$ it follows that 
\[
 P_\w^\pi( |X_n - xn| < \d n ) \geq P_\w^\pi\left( |T_{\fl{xn}} - n | < \d n - 1 \right). 
\]
Then \eqref{qlbs} follows easily from the quenched large deviation principle for $T_n/n$ when $x>0$ or from the quenched large deviation principle for $T_{-n}/n$ when $x<0$. 
To show \eqref{qlbs} for $x=0$, note that the continuity of $I_\eta$ implies that for any $\e>0$ there exists a $\d_0 = \d_0(\e)> 0$ such that $I_\eta(\d/2) < I_\eta(0) + \e$ for all $\d \in (0,\d_0)$. Applying \eqref{qlbs} with $x=\d/2$ implies that there exists a $\d'< \d/2$ such that 
\begin{align*}
\liminf_{n\ra\infty} \frac{1}{n} \log P_\w^\pi( |X_n| < \d n ) 
&\geq \liminf_{n\ra\infty} \frac{1}{n} \log P_\w^\pi( |X_n - \d n/2| < \d' n ) \\
&\geq -I_\eta(\d/2) - \e > -I_\eta(0) - 2\e. 
\end{align*}
Since $\e>0$ was arbitrary, this proves \eqref{qlbs} for $x=0$ and thus finishes the proof of the quenched large deviations lower bound for $X_n/n$. 

\subsection{Proof of Theorem \ref{th:aldpXn}}
To prove the averaged large deviations lower bound for $X_n/n$ it is enough to show that 
\begin{equation}\label{alb}
 \lim_{\d\ra 0} \liminf_{n\ra\infty} \frac{1}{n} \log \P^\pi( |X_n - nx| < n\d ) \geq -\mathbb{I}_\eta(x), \quad \forall x \in (-1,1),
\end{equation}
As in the quenched case, when $x\neq 0$ this follows from Theorem \ref{th:aldpTn} and the fact that 
\[
 \P^\pi( |X_n - xn| < \d n ) \geq  \P^\pi\left( |T_{\fl{xn}} - n | < \d n - 1 \right), \quad \forall x\neq 0, \, \d>0. 
\]
The same argument as in the quenched case then shows that \eqref{alb} can be extended to $x=0$ by the fact that $\mathbb{I}_\eta(x)$ is continuous at $x=0$.

To prove the matching large deviation upper bound we will show below that it is enough to prove that
\begin{equation}\label{aubrt}
 \limsup_{n\ra\infty} \frac{1}{n} \log \P^\pi( X_n \geq xn ) \leq - \mathbb{I}_\eta(x), \quad \forall x \geq \vp,
\end{equation}
and 
\begin{equation}\label{aublt}
 \limsup_{n\ra\infty} \frac{1}{n} \log \P^\pi( X_n \leq xn ) \leq - \mathbb{I}_\eta(x), \quad \forall x \leq \vp. 
\end{equation}
As with the quenched large deviation principle for $X_n/n$ we will assume without loss of generality that the RWRE is recurrent or transient to the right. 
Then, the upper bound for right tails \eqref{aubrt} follows easily from Theorem \ref{th:aldpTn} since 
\begin{align*}
\limsup_{n\ra\infty} \frac{1}{n} \log \P^\pi( X_n \geq xn) 
&\leq \limsup_{n\ra\infty} \frac{1}{n} \log \P^\pi( T_{\cl{xn}} \leq n) \\
&\leq -x \inf_{t\leq1/x} \mathbb{J}_\eta(t)
= -x \mathbb{J}_\eta(1/x) = -\mathbb{I}_\eta(x), 
\end{align*}
where we used the fact that $\mathbb{J}_{\eta}(t)$ is non-increasing on $(-\infty,t_0]$ and $t_0 = 1/ \vp$.

To prove \eqref{aublt} for $x \in (0,\vp]$, taking expectations of \eqref{Xnqlefttail} implies that for any fixed $K\geq 1$
\begin{align*}
&\P^{\pi}( X_n \leq xn ) \\
&\leq \P^\pi( T_{\fl{xn}} \geq n ) 
+ \sum_{k=1}^K \P^\pi\left( T_{\fl{xn}} \in \left[ \tfrac{(k-1)n}{K}, \tfrac{kn}{K} \right)  \right)
\left\{ \sup_{\w \in \Omega_\eta} \max_i P_{\w}^{(0,i)} \left( \inf_{t \geq n(1-k/K)} X_t \leq 0 \right) \right\}.
\end{align*}
Then, applying Theorem \ref{th:aldpTn} and Lemma \ref{I0ub} (and the fact that $I_\eta(0) = \mathbb{I}_\eta(0) = \lcrit(\eta)$) and repeating the steps in \eqref{qKdecomp1} we obtain that for any $x \in (0,\vp]$,
\begin{align*}
& \limsup_{n\ra\infty} \frac{1}{n} \log \P^{\pi}( X_n \leq xn )\\
&\quad \leq - \min\left\{ x \inf_{t \geq 1/x} \mathbb{J}_\eta(t), \, \min_{k\leq K} \left\{  \inf_{t \in [ \frac{k-1}{Kx}, \frac{k}{Kx} ] } x \mathbb{J}_\eta(t) + (1-\tfrac{k}{K})\mathbb{I}_\eta(0) \right\} \right\} \\
&\quad= - \mathbb{I}_\eta(x) + \frac{1}{K} \mathbb{I}_\eta(0). 
\end{align*}
Then, taking $K\ra \infty$ proves \eqref{aublt} when $x>0$. 
The proof of \eqref{aublt} when $x<0$ is similar, mimicing the steps in \eqref{qKdecomp2} and using the averaged large deviation principle for $T_{-n}/n$ instead. Finally, \eqref{aublt} holds when $x=0$ by Lemma \ref{I0ub}. 

We still need to show that indeed \eqref{aubrt} and \eqref{aublt} imply the general large deviations upper bound
\[
 \limsup_{n\ra\infty} \frac{1}{n} \log \P^\pi( X_n/n \in F ) \leq - \inf_{x \in F} \mathbb{I}_\eta(x), \quad \text{ for all closed } F.
\]
In order for this to be true, it is necessary that the averaged rate function $\mathbb{I}_\eta(x)$ is non-increasing on $[-1,\vp]$ and non-decreasing $[\vp,1]$.  
If $\lcrit(\eta) = 0$ then this is obvious since $\mathbb{I}_\eta(x)$ is convex, non-negative, and $\mathbb{I}_\eta(\vp) = 0$. 
On the other hand, if $\lcrit(\eta) > 0$ then we need a different argument since we don't have a direct proof that $\mathbb{I}_\eta(x)$ is convex. 
Since $\mathbb{I}_\eta(0) = \lcrit(\eta) > 0$ it follows that $\vp \neq 0$ and so we may assume without loss of generality that $\vp > 0$. Since $\mathbb{I}_\eta(x)$ is non-negative and convex on $[0,1]$, $\mathbb{I}_\eta(x)$ is non-decreasing on $[\vp,1]$ and non-increasing on $[0,\vp]$. It remains only to show that $\mathbb{I}_\eta(x)$ is non-increasing on $[-1,0]$. To this end, fix $x<y\leq 0$. Then, \eqref{aublt} and \eqref{alb} imply that 
\begin{align*}
 -\mathbb{I}_\eta(x) \leq \lim_{\d\ra 0} \liminf_{n\ra\infty} \frac{1}{n} \log \P^\pi( |X_n - nx| < n\d) 
&\leq \limsup_{n\ra\infty} \frac{1}{n} \log \P^\pi( X_n \leq y n ) \leq -\mathbb{I}_\eta(y), 
\end{align*}
and so $\mathbb{I}_\eta(x)$ is indeed non-increasing on $[-1,0]$. 

We close the discussion of the averaged large deviation principle for $X_n/n$ by noting that the variational formula for $\mathbb{J}_\eta(t)$ in \eqref{eq:arfTn} implies a corresponding variational formula $\mathbb{I}_\eta(x)$. Indeed, we claim that 
\begin{equation}\label{eq:varformI}
 \mathbb{I}_\eta(x) = \inf_{\a \in M_1^e(\Omega_\k) \cap \mathcal{M}_\eta} \{ I_\a(x) + |x| h(\a|\eta) \}. 
\end{equation}
Recall from Lemma \ref{lem:hfinite} that $h(\a|\eta) = \infty$ for $\a \notin \mathcal{M}_\eta$, and thus for $x\neq 0$ the infimum in \eqref{eq:varformI} can be extended to $\a \in M_1^e(\Omega_\k)$. 
Then, \eqref{eq:varformI} follows easily from \eqref{eq:arfTn} and the formula for $\mathbb{I}_\eta(x)$ when $x\neq 0$. To show \eqref{eq:varformI} when $x=0$ note that 
\[
 \inf_{\a \in M_1^e(\Omega_\k) \cap \mathcal{M}_\eta} I_\a(0) = \inf_{ \a \in M_1^e(\Omega_\k) \cap \mathcal{M}_\eta} \lcrit(\a) = \lcrit(\eta), 
\]
where the last equality follows from Lemma \ref{Laldef}.

\appendix

\section{RWRE with bounded step sizes}\label{sec:boundedjumps}

It is well known that RWRE on a strip can be thought of as a generalization of RWRE on $\Z$ with bounded jumps. Indeed, if a RWRE on $\Z$ has jump sizes of at most $d\geq 2$, then by identifying points $(k,i) \in \Z \times [d]$ with the point $x=kd +i-1 \in \Z$ we can interpret the random walk as occuring on the strip. 
However, not all natural RWRE on $\Z$ with bounded jumps satisfy the uniform ellipticity in Assumption \ref{easm} when thought of as RWRE on a strip. 
In particular, there have been several results on what we will call $(L,R)$-RWRE \cite{kRWREBJ,bL1RWRE,bRWRELyapunov,hwL1Branching,hz1RBranching}; that is, RWRE on $\Z$ with jumps of at most $L$ steps to the left and at most $R$ steps to the right. We will consider $(L,R)$-RWRE that satisfy the following uniform ellipticity assumption
\begin{equation}\label{LRue}
\eta\left( P_\w( X_1 \in [-L,R] ) = 1 \right) = 1, \quad \text{and}\quad \eta\left( P_\w( X_1 = z ) \geq \k, \, \forall z \in [-L,R] \backslash \{0\} \right) = 1. 
\end{equation}
(Note that the second requirement in \eqref{LRue} allows, but does not require, the possibility that the RWRE may stay at its current location with positive probability.)
Such random walks can be viewed as a random walk on the strip $\Z \times [d]$ with $d = \max\{ L, R \}$. 
If $L=R=d$, then it is easy to see that Assumption \ref{easm} is satisfied. 
On the other hand, if $L\neq R$ then Assumption \ref{easm} is not satisfied. 
For instance, if $L>R$ when we translate the model to the strip $\Z \times [L]$ we have that 
\[
 \sum_j p_k(i,j) = 0, \, \forall i \in [1,L-R], 
 \quad \text{and}\quad 
 \sum_i p_k(i,j) = 0, \, \forall j \in [R+1, L]. 
\]
Thus both \eqref{onestepellipticity} and \eqref{entryheightellipticity} are violated for such RWRE. 

The most crucial way in which we used Assumption \ref{easm} was in the proofs of the existence of the vectors $\mu_n(\l)$ and $\nu_n(\l)$ where we used that the matrices $\Phi_k(\l)$ have entries bounded uniformly below. 
For RWRE on the strip $\Z \times [L]$ coming for $(L,R)$-RWRE on $\Z$ with $L>R$ it follows that $\P^{(0,i)}( T_1 < \infty, \, Y_{T_1} = j) = 0$ for all $j \in [R+1,L]$. Thus, $\Phi_k(\l)(i,j) = 0$ if $j \in [R+1,L]$. On the other hand, all other entries can be uniformly bounded below away from zero. In fact the ellipticity assumptions imply that $\Phi_k(\l)(i,j) \geq \k^L e^{\l L}$ for all $j \in [1,R]$. We will show how these facts can be used to prove Lemmas \ref{mudeflem} and \ref{nudeflem} for such RWRE on the strip. 

For convenience of notation, we will write the matrices $\Phi_k(\l)$ in block matrix notation as 
\[
 \Phi_k(\l) = 
\left(
 \begin{array}{c|c}
 A_k(\l) & \mathbf{0} \\
 \hline 
 B_k(\l) & \mathbf{0} 
\end{array}
\right)
\]
where $A_k(\l)$ is an $R\times R$ matrix and $B_k(\l)$ is $(L-R) \times R$ matrix. As noted above, the entries of $A_k(\l)$ and $B_k(\l)$ can be uniformly bounded away from 0. Also, a similar argument as in the proof of Lemma \ref{PhiUboundlem} shows that the entries can be uniformly bounded above for each $\l \leq \lcrit(\eta)$. That is, there exists some $c_\l > 0$ such that 
\begin{equation}\label{AUbound}
 c_\l \leq A_k(\l)(i,j), B_k(\l)(i,j) \leq \frac{1}{c_\l}. 
\end{equation}
If we adopt the notation $A_{[k,n]}(\l) = A_k(\l)A_{k+1}(\l) \cdots A_n(\l)$ for $k\leq n$ then it is clear that 
\begin{equation}\label{blockproduct}
 \Phi_{[k,n]}(\l) = 
 \left(
 \begin{array}{c|c}
 A_{[k,n]}(\l) & \mathbf{0} \\
 \hline 
 B_k(\l)A_{[k-1,n]} & \mathbf{0} 
\end{array}
\right)
\end{equation}
By the uniform bounds \eqref{AUbound} on the entries of $A_k(\l)$ we can conclude from Lemma \ref{pr} that there exists a vector $\tilde{\mu}_n(\l) \in \R^{R}$ with non-negative entries summing to 1 such that 
\begin{equation}\label{tmuerror}
 \sup_{\mathbf{0} \neq \pi \geq 0} \left\| \frac{\pi A_{[m,n-1]}(\l) }{ \pi A_{[m,n-1]}(\l) \one } - \tilde\mu_n(\l) \right\|_1 \leq \frac{2 (1-c_\l^4)^{n-m-1}}{c_\l^4}, \quad \forall m < n. 
\end{equation}

Now, let $\mu_n(\l) = (\tilde{\mu}_n(\l), \mathbf{0}) \in \R^L$ (i.e., append the vector $\tilde\mu_n(\l)$ with $L-R$ zeros at the end). 
Then, it follows from the block matrix representation \eqref{blockproduct} that
\begin{align*}
 \left\| \frac{e_i \Phi_{[m,n-1]}(\l) }{e_i \Phi_{[m,n-1]}(\l) \one} - \mu_n(\l) \right\|_1
 &\leq
 \begin{cases}
  \left\| \frac{e_i A_{[m,n-1]}(\l) }{e_i A_{[m,n-1]}(\l) \one} - \tilde\mu_n(\l) \right\|_1 & \text{if } i \in [1,R] \\
  \left\| \frac{e_i B_m A_{[m+1,n-1]}(\l) }{e_i B_m A_{[m+1,n-1]}(\l) \one} - \tilde\mu_n(\l) \right\|_1 & \text{if } i \in [R+1,L]
 \end{cases} \\
 &\leq \frac{2 (1-c_\l^4)^{n-m-2}}{c_\l^4},
\end{align*}
where the last inequality follows from \eqref{tmuerror} with $\pi=e_i$ when $i \in [1,R]$ and with $\pi = e_i B_m(\l)$ when $i \in [R+1,L]$. 

The proof of the existence of $\nu_n(\l)$ with corresponding error bounds is similar but slightly more involved. 
First of all, note that since the entries of $A_k(\l)$ are uniformly bounded above and below, for every $k\in\Z$ there exists a $\s_k(\l) \in \R^R$ such that 
\begin{equation}\label{skerror}
 \left\| \s_{k,n}(\l) - \s_k(\l) \right\|_1 \leq \frac{2}{c_\l^4}(1-c_\l^4)^{n-k}, \quad \text{ where } \s_{k,n}(\l) = \frac{ A_{[k,n]}(\l) \one}{ \one^t A_{[k,n]}(\l) \one } \quad \text{ for } k \leq n. 
\end{equation}
Let $\tilde{\s}_{k,n}(\l)$ and $\tilde\s_k(\l)$ denote vectors in $\R^L$ formed by adding $L-R$ zeros to the end of $\s_{k,n}(\l)$ and $\s_k(\l)$, respectively. Then, using the block matrix representation in \eqref{blockproduct} it can be shown that 
\[
 \frac{ \Phi_{[k,n]}(\l) \one }{ \one^t \Phi_{[k,n]}(\l) \one } = 
\frac{ \Phi_k(\l) \tilde\s_{k+1,n}(\l) }{ \one^t \Phi_k(\l) \tilde\s_{k+1,n}(\l) }  
\]
Since \eqref{skerror} implies that $\tilde\s_{k+1,n} \ra \tilde\s_{k+1}$ as $n\ra\infty$, it follows that
\[
 \lim_{n\ra\infty}  \frac{ \Phi_{[k,n]}(\l) \one }{ \one^t \Phi_{[k,n]}(\l) \one } 
= \frac{ \Phi_k(\l) \tilde\s_{k+1}(\l) }{ \one^t \Phi_k(\l) \tilde\s_{k+1}(\l) }
=: \nu_k(\l). 
\]
Finally, the error bounds in \eqref{skerror} and the uniform bounds on the non-zero entries of $\Phi_k(\l)$ can be used to show that for any $k\leq n$
\[
 \left\| \frac{ \Phi_{[k,n]}(\l) \one }{ \one^t \Phi_{[k,n]}(\l) \one } - \nu_k(\l) \right\|_\infty \\
 = \left\| \frac{ \Phi_k(\l) \tilde\s_{k+1,n}(\l) }{ \one^t \Phi_k(\l) \tilde\s_{k+1,n}(\l) } - \frac{ \Phi_k(\l) \tilde\s_{k+1}(\l) }{ \one^t \Phi_k(\l) \tilde\s_{k+1}(\l) } \right\|_\infty \leq C (1-c_\l^4)^{n-k-1},
\]
where the constant $C$ depends only on $\l$. 

Having shown the existence of the vectors $\mu_k(\l)$ and $\nu_k(\l)$ as well as error bounds similar to \eqref{muerror} and \eqref{nuerror}, one can adapt the rest of the proofs of the quenched and averaged large deviation principles for $T_n/n$ and $X_n/n$ with a few minor technical modifications. 
The details are left to the interested reader. 

\bibliographystyle{alpha}
\bibliography{Strip}

\begin{thebibliography}{DGZ04}

\bibitem[BG00]{bgSRT}
Erwin Bolthausen and Ilya Goldsheid.
\newblock {Recurrence and transience of random walks in random environments on
  a strip}.
\newblock {\em Comm. Math. Phys.}, 214(2):429--447, 2000.

\bibitem[Br{\'e}02]{bL1RWRE}
Julien Br{\'e}mont.
\newblock {On some random walks on {$\mathbb{Z}$} in random medium}.
\newblock {\em Ann. Probab.}, 30(3):1266--1312, 2002.

\bibitem[Br{\'e}04]{bRWRELyapunov}
Julien Br{\'e}mont.
\newblock {Random walks in random medium on {$\mathbb{Z}$} and {L}yapunov
  spectrum}.
\newblock {\em Ann. Inst. H. Poincar{\'e} Probab. Statist.}, 40(3):309--336,
  2004.

\bibitem[CGZ00]{cgzLDP}
Francis Comets, Nina Gantert, and Ofer Zeitouni.
\newblock {Quenched, annealed and functional large deviations for
  one-dimensional random walk in random environment}.
\newblock {\em Probab. Theory Related Fields}, 118(1):65--114, 2000.

\bibitem[DGZ04]{dgzLDPH}
Amir Dembo, Nina Gantert, and Ofer Zeitouni.
\newblock {Large deviations for random walk in random environment with holding
  times}.
\newblock {\em Ann. Probab.}, 32(1B):996--1029, 2004.

\bibitem[DZ98]{dzLDTA}
Amir Dembo and Ofer Zeitouni.
\newblock {\em {Large deviations techniques and applications}}, volume~38 of
  {\em {Applications of Mathematics (New York)}}.
\newblock Springer-Verlag, New York, second edition, 1998.

\bibitem[FK60]{fkPRM}
H.~Furstenberg and H.~Kesten.
\newblock {Products of random matrices}.
\newblock {\em Ann. Math. Statist.}, 31:457--469, 1960.

\bibitem[F{\"o}l88]{fRFDP}
Hans F{\"o}llmer.
\newblock {Random fields and diffusion processes}.
\newblock In {\em {{\'E}cole d'{\'E}t{\'e} de {P}robabilit{\'e}s de
  {S}aint-{F}lour {XV}--{XVII}, 1985--87}}, volume 1362 of {\em {Lecture Notes
  in Math.}}, pages 101--203. Springer, Berlin, 1988.

\bibitem[GdH94]{gdhLDP}
Andreas Greven and Frank den Hollander.
\newblock {Large deviations for a random walk in random environment}.
\newblock {\em Ann. Probab.}, 22(3):1381--1428, 1994.

\bibitem[Gol08]{gSCLT}
Ilya~Ya. Goldsheid.
\newblock {Linear and sub-linear growth and the {CLT} for hitting times of a
  random walk in random environment on a strip}.
\newblock {\em Probab. Theory Related Fields}, 141(3-4):471--511, 2008.

\bibitem[HW13]{hwL1Branching}
Wenming Hong and Huaming Wang.
\newblock {Branching structure for an (L-1) random walk in random environment
  and its applications}.
\newblock {\em Infin. Dimens. Anal. Quantum Probab. Relat. Top.},
  16(1):1350006, 2013.

\bibitem[HZ10]{hz1RBranching}
Wenming Hong and Lin Zhang.
\newblock {Branching structure for the transient {$(1,R)$}-random walk in
  random environment and its applications}.
\newblock {\em Infin. Dimens. Anal. Quantum Probab. Relat. Top.},
  13(4):589--618, 2010.

\bibitem[Key84]{kRWREBJ}
Eric~S. Key.
\newblock {Recurrence and transience criteria for random walk in a random
  environment}.
\newblock {\em Ann. Probab.}, 12(2):529--560, 1984.

\bibitem[PZ09]{pzALDRF}
Jonathon Peterson and Ofer Zeitouni.
\newblock {On the annealed large deviation rate function for a
  multi-dimensional random walk in random environment}.
\newblock {\em ALEA Lat. Am. J. Probab. Math. Stat.}, 6:349--368, 2009.

\bibitem[RA04]{rLDPmixing}
Firas Rassoul-Agha.
\newblock {Large deviations for random walks in a mixing random environment and
  other (non-{M}arkov) random walks}.
\newblock {\em Comm. Pure Appl. Math.}, 57(9):1178--1196, 2004.

\bibitem[Roi08]{rSCLT}
Alexander Roitershtein.
\newblock {Transient random walks on a strip in a random environment}.
\newblock {\em Ann. Probab.}, 36(6):2354--2387, 2008.

\bibitem[Rud76]{rPOMA}
Walter Rudin.
\newblock {\em {Principles of mathematical analysis}}.
\newblock McGraw-Hill Book Co., New York, third edition, 1976.
\newblock International Series in Pure and Applied Mathematics.

\bibitem[Sio58]{sMinimax}
Maurice Sion.
\newblock {On general minimax theorems}.
\newblock {\em Pacific J. Math.}, 8:171--176, 1958.

\bibitem[Var01]{vPT}
S.~R.~S. Varadhan.
\newblock {\em {Probability theory}}, volume~7 of {\em {Courant Lecture Notes
  in Mathematics}}.
\newblock New York University Courant Institute of Mathematical Sciences, New
  York, 2001.

\bibitem[Var03]{vLDP}
S.~R.~S. Varadhan.
\newblock {Large deviations for random walks in a random environment}.
\newblock {\em Comm. Pure Appl. Math.}, 56(8):1222--1245, 2003.
\newblock Dedicated to the memory of J{\"u}rgen K. Moser.

\bibitem[Yil09]{yQLDP}
Atilla Yilmaz.
\newblock {Quenched large deviations for random walk in a random environment}.
\newblock {\em Comm. Pure Appl. Math.}, 62(8):1033--1075, 2009.

\bibitem[Yil10]{yALDP}
Atilla Yilmaz.
\newblock {Averaged large deviations for random walk in a random environment}.
\newblock {\em Ann. Inst. Henri Poincar{\'e} Probab. Stat.}, 46(3):853--868,
  2010.

\bibitem[Zer98]{zLEQLDP}
Martin P.~W. Zerner.
\newblock {Lyapounov exponents and quenched large deviations for
  multidimensional random walk in random environment}.
\newblock {\em Ann. Probab.}, 26(4):1446--1476, 1998.

\end{thebibliography}

\end{document}